\tikzset{labl/.style={anchor=south, rotate=90, inner sep=.5mm}}
\tikzset{close/.style={near start,outer sep=-2pt}}
\DeclareMathOperator{\sol}{sol}
\DeclareMathOperator{\Dec}{Dec}
\DeclareMathOperator{\ab}{ab}
\DeclareMathOperator{\ur}{ur}
\DeclareMathOperator{\cd}{cd}
\DeclareMathOperator{\na}{na}
\DeclareMathOperator{\ch}{char}
\DeclareMathOperator{\Stab}{Stab}
\DeclareMathOperator{\tr}{tr}
\DeclareMathOperator{\ord}{ord}
\newcommand{\isom}{\cong}
\newcommand{\F}{\mathbb{F}}
\newcommand{\Lb}{\overline{L}} 
\newcommand{\tD}{\widetilde{\mathcal{D}}}
\newcommand{\D}{\mathcal{D}}
\newcommand{\G}{\mathcal{G}}
\newcommand{\Q}{\mathbb{Q}}
\newcommand{\Z}{\mathbb{Z}}
\newcommand{\p}{\mathfrak{p}}
\newcommand{\Pc}{\mathfrak{P}} 
\newcommand{\q}{\mathfrak{q}}
\newcommand{\K}{\overline{K}}
\newcommand{\Primes}{\mathfrak{Primes}^{\na}}
\newcommand{\lPrimes}{\mathfrak{Primes}^{\na,(l')}}
\newcommand{\surj}{\twoheadrightarrow}
\newcommand{\inj}{\hookrightarrow}
\newcommand{\isomto}{\xrightarrow{\sim}}
\newtheorem{theorem}{Theorem}
\numberwithin{theorem}{section}
\newtheorem{proposition}[theorem]{Proposition}
\newtheorem{corollary}[theorem]{Corollary}
\newtheorem{lemma}[theorem]{Lemma}
\newtheorem*{NU}{Neukirch-Uchida}
\newtheorem*{ST}{Sa\"idi-Tamagawa}
\newtheorem*{Uchida1}{Conditional Hom-Form}
\newtheorem*{Uchida2}{Hom-Form over $\Q$}
\newtheorem*{Theorem1}{Conditional $m$-step Hom-Form of the Birational Anabelian Geometry of Number Fields}
\newtheorem*{Theorem2}{$m$-step Hom-Form over $\Q$}
\newtheorem*{HF}{Hom-Form of the Birational Anabelian Geometry of Number Fields (Conjecture)}
\theoremstyle{definition}
\newtheorem{definition}[theorem]{Definition}
\theoremstyle{remark}
\newtheorem{remark}[theorem]{Remark}
\numberwithin{equation}{section}
\title{The $m$-step Solvable Hom-Form of Birational Anabelian Geometry for Number Fields}
\author{Alberto Corato\thanks{a.corato@exeter.ac.uk} \\ University of Exeter \and Mohamed Sa\"idi\thanks{m.saidi@exeter.ac.uk} \\ University of Exeter
\date{}}
\begin{document}

\maketitle

\thanks{This article was prepared with the financial support of EPSRC research grant EP/T031816/1.}

\begin{abstract}
In 1981, Uchida proved a \textbf{conditional} version of the Hom-form of the Grothendieck birational anabelian conjecture for number fields. In this paper we prove an $m$-step solvable \textbf{conditional} version of the Grothendieck birational anabelian conjecture for number fields whereby our conditions are slightly weaker than the ones in Uchida's theorem. Furthermore, as in Uchida's work, we show that our result is unconditional in the case where the number field relating to the domain of the given homomorphism is $\Q$. \end{abstract}

\maketitle

\section*{Notation and Definitions}
\begin{itemize}
\item A number field is a finite field extension of $\Q$.
\item A $p$-adic local field is a finite field extension of the field of $p$-adic numbers $\Q_p$.
\item Let $K$ be a number field. We denote by $\K$ a separable closure of $K$, and by $G_K = G(\K/K)$ its absolute Galois group.
\item Let $G$ be a profinite group. We denote $G[1]=\overline{[G,G]}$ the closure of its commutator subgroup, and $G^{\ab} = G/G[1]$.
\item Let $G$ be a profinite group. For all integers $m \ge 2$, we define inductively $G[m]=\overline{[G[m-1],G[m-1]]}$, and $G^m = G/G[m]$. In particular $G^{\ab} = G^1$.
\item Let $G$ be a profinite group, we denote $G^{\sol}=G/ (\bigcap_{m \ge 1} G[m])$ the maximal pro-solvable quotient of $G$.
\item Let $G$ be a profinite group and $p$ a prime number. We say that a subgroup $G_p$ of $G$ is a $p$-Sylow subgroup of $G$ if its order $|G_p|$ is a power of $p$, and $p \not\vert [G:G_p]$. Both $|G_p|$ and $[G:G_p]$ are considered as supernatural numbers.
\item Let $G$ be a profinite group, and $p$ a prime number. We denote by $G(p)$ the maximal pro-$p$-quotient of $G$, and by $G^{(p')}$ the maximal prime-to-$p$ quotient of $G$ 
\item If $K/k$ is a Galois extension of fields we denote its Galois group by $G(K/k)$. Given a profinite group $G$ we say that $K$ is a $G$-extension of $k$ if $G(K/k)$ is isomorphic to $G$.
\item Let $K$ be a number field, $p$ a prime number and $s \ge 1$ an integer. We say that $K$ has $\Z_p$-rank $s$ when $s$ is the maximum integer for which $K$ has a $\Z_p^s$-extension.
\item For a number field $K$, and a separable closure $\K$, we say that the subfield of $\K$ corresponding to $G_K[m]$ is the maximal $m$-step solvable extension of $K$, which we denote by $K_m$. Observe that $K_m$ is a Galois extension of $K$ with Galois group $G_K^m$. Furthermore, we set $K_0 = K$. \\
If $K=\Q$, we will denote this by $\Q_{(m)}$ instead of $\Q_m$ to avoid confusion with the $p$-adic numbers $\Q_p$ in case $m$ is a prime number. 
\item Let $K$ be an algebraic extension of $\Q$ (not necessarily finite). We denote by $\Primes_K$ the set of its non-archimedean primes, and for a prime number $l$ we denote by $\lPrimes_K$ the subset of $\Primes_K$ consisting of primes with residue characteristic $\neq l$.
\item For a field $k$, and a prime number $p$, we denote by $\mu(k)$ the (abelian) group of the roots of unity contained in $k$, by $\mu(k)(p)$ the subgroup of $\mu(k)$ of roots of $p$-power order, and by $\mu(k)^{(p')}$ the subgroup of prime-to-$p$-power order roots of unity of $\mu(k)$.
\item Let $K$ be an algebraic extension of $\Q$, and $\p \in \Primes_K$. We denote by $K_{\p}$ the localisation of $K$ at $\p$.
\item For a number field $K$, and $\p \in \Primes_K$ with residue characteristic $p$, we will denote by $f_{\p}$ its residue degree, by $e_{\p}$ its ramification index, and by $N(\p)$ its norm (i.e. $N(\p) = p^{f_{\p}}$ and $[K_{\p}:\Q_p]=e_{\p}f_{\p}$).
\item For the absolute Galois group $G_k$ associated to a local field $k$, we denote by $G_k^{\tr}$ its maximal tamely ramified quotient and by $G_k^{\ur}$ its maximal unramified quotient.
\item For a Galois extension $K/k$, and a prime $\p$ of $K$ (not necessarily non-archimedean), we denote the decomposition group at $\p$ in $G(K/k)$ by $D_{\p}$. If $\bar{\p} = \p \cap k$, we say $D_{\bar{\p}}=D_\p$ is a decomposition group above $\bar{\p}$ (unique up to conjugation by an element of $G(K/k)$).
\item For a Galois extension of $K/k$, and two primes $\p,\,\p'$ of $K$, we say that $\p$ and $\p'$ are conjugate in $G(K/k)$ if there exists $g \in G(K/k)$ such that $g \p = \p'$ where $g \p$ is the element of $\mathfrak{\Primes}_K$ determined by the natural action of $g$ on $\p$. 
\item Let $G$ be a profinite group, and $H_1,H_2 \subseteq G$ two subgroups. We say that $H_1$ and $H_2$ are commensurable if $H_1 \cap H_2$ is open in both $H_1$ and $H_2$.
\item We say that a group $F$ is $l$-decomposition-like if it is non-abelian and fits in an exact sequence $1 \to \Z_l \to F \to \Z_l \to 1$.
\item For a prime number $p$, we say that a group $G$ has cohomological $p$-dimension $n$ if $n$ is the largest integer for which $H^n(G,A)(p)=0$ for every torsion $G$-module $A$. We denote this $\cd_p(G)$. This is possibly $\infty$.
\item For a group $G$ we say it has cohomological dimension $\sup_p(\cd_p(G))$. We denote this number by $\cd(G)$. This is possibly $\infty$.
\item Let $K$ be a number field, and let $S \subseteq \Primes_{K}$. We define the density of the set $S$ in $\Primes_{K}$ as \[ \delta_K(S)= \lim_{s \to 1^+} \frac{\sum_{\p \in S} N(\p)^{-s}}{\sum_{\p \in \Primes_K} N(\p)^{-s}}. \]
\item All homomorphisms of profinite groups are assumed to be continuous with respect to the profinite topology
\end{itemize}

\section*{Introduction}

For two algebraic number fields $K$ and $L$, an isomorphism between their separable closure $\K$ and $\Lb$ induces an isomorphism between their absolute Galois groups $G_K=G(\K/K)$ and $G_L=G(\Lb/L)$. A well known result by Neukirch and Uchida (\cite{Neu1}, \cite{Neu2}, \cite{UchidaIsom}) shows that $G_K$ determines the isomorphy type of $K$ as follows:
\begin{NU}
Let $\sigma: G_K \to G_L$ be an isomorphism of profinite groups. Then, there exists a unique isomorphism of fields $\tau: \K \isomto \Lb$ such that for all $g \in G_K$ we have $\sigma(g) = \tau g \tau^{-1}$. Furthermore, $\tau$ restricts to an isomorphism $K \isomto L$.
\end{NU}
In \cite{UchidaIsom}, Uchida also shows that in the above theorem the absolute Galois group can be replaced by its maximal pro-solvable quotient $G^{\sol}$. \\ 
A more recent result, by Sa\"idi and Tamagawa \cite{S-T}, shows that in the statement of the Neukirch-Uchida theorem, one can replace $G_K$ by its $m$-step maximal solvable quotient $G_K^m$, as follows:
\begin{ST}
Let $m \ge 2$ be an integer, and let $\sigma_{m+3}: G_K^{m+3} \to G_L^{m+3}$ be an isomorphism of profinite groups. Consider the isomorphism of profinite groups $\sigma_m: G_K^m \to G_L^m$ naturally induced by $\sigma_{m+3}$. Then, there is a unique isomorphism of fields $\tau_m: K_m \to L_m$ such that for all $g \in G_K^m$ we have $\sigma_m(g) = \tau_m g \tau_m^{-1}$. Furthermore, $\tau_m$ restricts to an isomorphism $K \isomto L$.
\end{ST} 
Another noteworthy result by Uchida (\cite{UchidaHom}) shows that one can replace the isomorphisms in the Neukirch-Uchida theorem with homomorphisms. More precisely, one can obtain the following conditional result:
\begin{Uchida1}
Let $\sigma: G_K \to G_L$ be a homomorphism of profinite groups, and assume that for each decomposition group at a finite prime $D_{\p} \subset G_K$, there exists a decomposition group at a finite prime $D_{\q} \subset G_L$ such that $\sigma(D_{\p}) \subseteq D_{\q}$ and $\sigma(D_{\p})$ is open in $D_{\q}$. Then, $\sigma(G_K)$ is open in $G_L$, and there exists a unique embedding of fields $\tau: \Lb \inj \K$ such that for all $g \in G_K$ we have $\tau \sigma(g) = g \tau$. Furthermore, $\tau$ restricts to an embedding $L \inj K$.
\end{Uchida1}
Uchida also proved the following unconditional result:
\begin{Uchida2}
Let $\sigma: G_{\Q} \to G_L$ be a homomorphism of profinite groups such that $\sigma(G_{\Q})$ is open in $G_L$. Then, $L=\Q$, $\sigma$ is an isomorphism, and there exists an an automorphism $\tau: \overline{\Q} \to \overline{\Q}$ such that $\tau \sigma(g) = g \tau$ for all $g \in G_{\Q}^m$.
\end{Uchida2}
Uchida's results provide a conditional version of the Hom-Form of the Birational Anabelian Geometry of Number Fields (\cite{UchidaHom}, Conjecture)
\begin{HF}
Let $\sigma: G_K \to G_L$ be a homomorphism of profinite groups such that $\sigma(G_K)$ is open in $G_L$. Then, there exists a unique embedding of fields $\tau: \Lb \inj \K$ such that for all $g \in G_K$ we have $\tau \sigma(g) = g \tau$. Furthermore, $\tau$ restricts to an embedding $L \inj K$.
\end{HF} 
Our goal in this paper is to refine Uchida's results to a sharper version, analogous to the relation between Sa\"idi and Tamagawa's results on the Isom-Form with respect to the Neukirch-Uchida theorem. To do this, we will use the theory established by Sa\"idi and Tamagawa in \cite{S-T}, and prove such a version under some condition on a continuous homomorphism between $G_K^m$ and $G_L^m$ which can be detected group-theoretically. \\ 
We will use an object introduced by Sa\"idi and Tamagawa, $(\star_l)$-subgroups (see Definition \ref{starldef}), which consists of a generalisation of $l$-Sylow subgroups of decomposition groups which can be detected group-theoretically, but requiring an additional abelian ``step'' of information. We will show that, as long as our homomorphism satisfies some condition on these $(\star_l)$-subgroups, we are able to construct a map between the primes of the fields $K$ and $L$. By working on this map, we can show that it satisfies some inequality between the norm of primes. \\
With some additional result which we obtain from the map of primes, we are then able to show that a homomorphism $\sigma_m: G_K^m \to G_L^m$ satisfying our desired condition induces an embedding of $L_m$ in $K_m$ compatible with $\sigma_m$. \\
To show this embedding of fields is unique, we will add another condition, which can also be expressed entirely in terms of group theory, to our homomorphism. \\
The following statement (see Theorem \ref{Main} for the proof) will be the main result of this paper:
\begin{Theorem1}
Let $m \ge 2$ be an integer, and $\sigma_{m+3}: G_K^{m+3} \to G_L^{m+3}$ a homomorphism of profinite groups. Consider the homomorphism of profinite groups $\sigma_m: G_K^m \to G_L^m$ induced by $\sigma_{m+3}$. Assume that $\sigma_m$ restricts to an injection on every $(\star_l)$-subgroup of $G_K^{m}$ for every prime number $l$, and that $\sigma_{m+3}(G_K^{m+3}) \supseteq G_L^{m+3}[m-1]$. \\ Then, there exists a unique embedding of fields $\tau: L_m \inj K_m$ such that for all $g \in G_K^m$ we have $\tau \sigma_m(g) = g \tau$. Furthermore, $\tau$ restricts to an embedding $L \inj K$.
\end{Theorem1}  
The condition that $(\star_l)$-subgroups are mapped injectively is a sharper variation of the condition Uchida uses in his result (cf. Remark \ref{Sharp}).
\\ We will then also give a variation on the Conditional $m$-step Hom-Form which follows immediately from the above but is a proper sharpening of Uchida's result. In particular we will show that, for a possibly large integer $m$ which is dependent on the fields $K$ and $L$, the condition we require for uniqueness is automatically satisfied (see Theorem \ref{Variation}). \\
The last result we prove in this paper is a study of a specific case of our previous result, namely we look at what happens when $K = \Q$ and we have a homomorphism $\sigma_m: G_{\Q}^m \to G_L^m$ where $m$ is an integer $\ge 3$. We show that any homomorphism $\sigma_{m-2}: G_{\Q}^{m-2} \to G_L^{m-2}$ induced from $\sigma_m$ is an isomorphism and $L=\Q$. In particular, the condition on $l$-decomposition like group we requested in the main result holds. We then have the following unconditional result (see Theorem \ref{MainQ}):
\begin{Theorem2}
Let $m \ge 2$ be an integer, and $\sigma_{m+3}: G_\Q^{m+3} \to G_L^{m+3}$ a homomorphism of profinite groups. Consider the homomorphism of profinite groups $\sigma_m: G_{\Q}^m \to G_L^m$ induced by $\sigma_{m+3}$. Then, $L=\Q$, $\sigma_m$ is an isomorphism, and there exists a unique automorphism $\tau: \Q_{(m)} \to \Q_{(m)}$ such that, for all $g \in G_{\Q}^m$, we have $\sigma_m(g) = \tau g \tau^{-1}$.
\end{Theorem2}

\section{Structure of the maximal m-Step solvable Galois Group}

Most of the results in this section are found in Sa\"idi and Tamagawa's work \cite{S-T}. However, they are also presented here as they are needed to understand the proofs presented in the rest of the paper.

\begin{proposition}\label{mSeparatedness}
Let $m \ge 1$ be an integer, $K$ a number field, $\p_1,\p_2 \in \Primes_{K_m}$, and $D_{\p_1},D_{\p_2}$ their decomposition groups in $G_K^m$. Let $\bar{\p}_1$ and $\bar{\p}_2$ be respectively the images of $\p_1$ and $\p_2$ in $K_{m-1}$. Then $D_{\p_1} \cap D_{\p_2} \neq \{1\} \iff \bar{\p}_1 = \bar{\p}_2$. \\
Furthermore, if $m \ge 2$, $D_{\p_1} = D_{\p_2} \iff {\p}_1 = {\p}_2$. In particular the natural map $\Primes_{K_m} \surj \Dec(K_m/K)$ is bijective.
\end{proposition}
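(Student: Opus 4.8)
The plan is to reduce everything to the top solvable layer $K_m/K_{m-1}$, which is \emph{abelian} with Galois group $A := \ker(G_K^m \surj G_K^{m-1}) = G_K[m-1]/G_K[m]$, and to exploit the elementary fact that in an abelian extension the decomposition subgroup attached to a prime of the base is independent of the chosen prime lying above it (in particular it is normal in $A$). For the implication $\bar\p_1 = \bar\p_2 \Rightarrow D_{\p_1}\cap D_{\p_2}\neq\{1\}$: writing $\bar\p := \bar\p_1 = \bar\p_2$, the group $A$ permutes the primes of $K_m$ above $\bar\p$ transitively, so $\p_2 = \tau\p_1$ for some $\tau\in A$ and hence $D_{\p_2} = \tau D_{\p_1}\tau^{-1}$; since $D_{\p_1}\cap A$ is the decomposition group $D_{\bar\p}$ of $\bar\p$ in $K_m/K_{m-1}$, which is $A$-normal, it is fixed under conjugation by $\tau$, so $\{1\}\neq D_{\bar\p}\subseteq D_{\p_1}\cap D_{\p_2}$. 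The point $D_{\bar\p}\neq\{1\}$ is the assertion that $\bar\p$ does not split completely in the maximal abelian extension $K_m$ of $K_{m-1}$: for $m=1$ this is classical for number fields, and for $m\ge 2$ it is read off from the description of the local fields $(K_{m-1})_{\bar\p}$ recalled in Section~1 --- these are large (they contain all roots of unity and a copy of $\Q_p^{\ab}$) but still admit nontrivial abelian extensions.

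For the converse $D_{\p_1}\cap D_{\p_2}\neq\{1\} \Rightarrow \bar\p_1 = \bar\p_2$ I would induct on $m$, the case $m=1$ (distinct non-archimedean primes of a number field have decomposition subgroups of $G_K^{\ab}$ meeting trivially) being treated directly. For the inductive step, pick $1\neq g\in D_{\p_1}\cap D_{\p_2}$ and let $\bar g$ be its image in $G_K^{m-1}$, which lies in the intersection of the decomposition groups of $\bar\p_1,\bar\p_2$ inside $G_K^{m-1}$: if $\bar g\neq 1$ the inductive hypothesis shows $\bar\p_1,\bar\p_2$ already have the same image in $K_{m-2}$, and if $\bar g = 1$ then $g\in A$ lies in the decomposition groups of $\bar\p_1,\bar\p_2$ in the abelian extension $K_m/K_{m-1}$. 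In either case one is reduced to a statement purely about distinct non-archimedean primes of $K_{m-1}$ and their decomposition behaviour in $K_m$ --- a $K_{m-1}$-flavoured analogue of the base case --- and I expect this to be the main obstacle: unlike the first implication it is \emph{not} formal, and it genuinely uses the control on the completions appearing in the tower $K\subseteq K_1\subseteq\cdots\subseteq K_m$ (their residue fields and value groups being ``small enough'' to detect a prime) furnished by the results of Section~1.

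Finally, suppose $m\ge 2$ and $D_{\p_1} = D_{\p_2}$. Decomposition groups of non-archimedean primes are nontrivial (by the previous paragraph, $D_{\p_i}\supseteq D_{\p_i}\cap A = D_{\bar\p_i}\neq\{1\}$), so the first part gives $\bar\p_1 = \bar\p_2$; consequently $\p_1,\p_2$ lie in one $G_K^m$-orbit, say $\p_2 = h\p_1$ with $h\in G_K^m$, and then $h$ normalises $D_{\p_1}$ while $\p_1 = \p_2$ holds precisely when $h\in D_{\p_1}$. Thus the claim is equivalent to the self-normalising property $N_{G_K^m}(D_{\p_1}) = D_{\p_1}$; projecting to $G_K^{m-1}$ and using the first part to localise the question at $\bar\p_1$, this in turn reduces to the fact that the decomposition group of a prime in the abelian extension $K_m/K_{m-1}$ admits no extra normaliser, again a consequence of the local structure theory. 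Granting this, $\p\mapsto D_\p$ is injective; as it is surjective by the very definition of $\Dec(K_m/K)$, we obtain the asserted bijection $\Primes_{K_m}\isomto\Dec(K_m/K)$.
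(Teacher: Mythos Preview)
The paper does not prove this proposition; it cites \cite{S-T}, Propositions~1.3 and~1.9, so there is no in-paper argument to compare against. Your outline for $\bar\p_1=\bar\p_2\Rightarrow D_{\p_1}\cap D_{\p_2}\neq\{1\}$ is correct (the common subgroup $D_{\p_i}\cap A$ is the decomposition group of $\bar\p$ in the abelian layer, nontrivial by the local description in \cite{S-T}, Proposition~1.1), and your reduction of the second assertion to the self-normalising property $N_{G_K^m}(D_{\p_1})=D_{\p_1}$ is also right --- that property is exactly what \cite{S-T}, Proposition~1.9 establishes, again by local means.

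Your inductive scheme for the converse $D_{\p_1}\cap D_{\p_2}\neq\{1\}\Rightarrow\bar\p_1=\bar\p_2$, however, does not close, and the problem is structural rather than just the gap you flag. In the case $\bar g\neq1$, the inductive hypothesis at level $m-1$ says that $D_{\bar\p_1}\cap D_{\bar\p_2}\neq\{1\}$ in $G_K^{m-1}$ is equivalent to the images of $\bar\p_1,\bar\p_2$ in $K_{m-2}$ coinciding --- \emph{not} to $\bar\p_1=\bar\p_2$. So from $\bar g\neq1$ you learn only that $\bar\p_1$ and $\bar\p_2$ lie over a common prime of $K_{m-2}$, which is strictly weaker than the desired conclusion and does not reduce to any statement about decomposition behaviour in $K_m/K_{m-1}$; each pass through the induction loses a level and never catches up. The argument in \cite{S-T} proceeds instead by direct local analysis of the $D_{\p_i}$ (via their explicit description as quotients of local Galois groups, cf.\ \cite{S-T}, Proposition~1.1), which pins down $\bar\p_i$ from suitable abelian or tame data attached to $D_{\p_i}$ without descending the solvable tower.
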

\begin{proof}
The proof for the first part of the statement can be found in \cite{S-T}, Proposition 1.3. The proof for the second part of the statement can be found in \cite{S-T}, Proposition 1.9
\end{proof}

\begin{definition}\label{curlyh}
For a subgroup $F$ of $G_K^m$, denote by $\widetilde{F}_0$ its inverse image by the quotient $G_K^{m+1} \to G_K^m$. We will denote by $\mathcal{H}^2(F,\F_l)$ the image of the inflation map $H^2(F,\F_l) \to H^2(\widetilde{F}_0,\F_l)$. \end{definition} 

\begin{proposition}\label{localglobal}
Let $F \subset G_K^m$ be a closed subgroup, and $\widetilde{F}$ be the inverse image of $F$ by the quotient $G_K \to G_L^m$. Let $L$ be the subextension of $K_m/K$ fixed by $F$. \\ Then, there exists an injective map \[\mathcal{H}^2(F,\F_l) \to \prod_{\p} H^2(\widetilde{F}_{\p},\F_l)\] where the product is indexed over all primes $\p$ in $\mathfrak{Primes}_L$, and $\widetilde{F}_{\p}$ denotes a decomposition subgroup at $\p$ in $\widetilde{F}$.
\end{proposition}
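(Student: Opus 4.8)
The plan is to factor the desired map as a composite
\[
\mathcal{H}^2(F,\F_l)\;\hookrightarrow\;H^2(\widetilde F_0,\F_l)\;\xrightarrow{\ \mathrm{inf}\ }\;H^2(\widetilde F,\F_l)\;\xrightarrow{\ (\mathrm{res}_{\p})_{\p}\ }\;\prod_{\p}H^2(\widetilde F_{\p},\F_l),
\]
where the middle inflation is along the surjection $\widetilde F\twoheadrightarrow\widetilde F_0$ (note $\widetilde F=G_L$ with $L$ the number field fixed by $F$ inside $K_m$, so $F=G(K_m/L)$, $\widetilde F_0=G(K_{m+1}/L)$, $\widetilde F_{\p}=G_{L_{\p}}$), and the last map is the family of restrictions to decomposition groups, i.e. the localization map of $L$. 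It then suffices to prove: (A) the composite $\mathcal{H}^2(F,\F_l)\to H^2(G_L,\F_l)$ of the first two arrows is injective; and (B) the localization map $H^2(G_L,\F_l)\to\prod_{\p\in\Primes_L}H^2(G_{L_{\p}},\F_l)$ is injective on the image of (A).

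For (A): by Definition~\ref{curlyh} the composite in question is the inflation $H^2(F,\F_l)\to H^2(G_L,\F_l)$ attached to $G_L\twoheadrightarrow F$ with the surjection $H^2(F,\F_l)\twoheadrightarrow\mathcal H^2(F,\F_l)$ precomposed and removed, so (A) is equivalent to the equality of kernels
\[
\ker\!\big(H^2(F,\F_l)\xrightarrow{\mathrm{inf}}H^2(\widetilde F_0,\F_l)\big)\;=\;\ker\!\big(H^2(F,\F_l)\xrightarrow{\mathrm{inf}}H^2(G_L,\F_l)\big).
\]
Apply the five-term inflation--restriction sequences of the Hochschild--Serre spectral sequences for the two extensions $1\to G_K[m]\to G_L\to F\to 1$ and $1\to G_K[m]/G_K[m+1]\to\widetilde F_0\to F\to 1$: the right-hand kernel above is the image of the transgression $H^1(G_K[m],\F_l)^F\to H^2(F,\F_l)$, and the left-hand kernel is the image of the transgression $H^1\big(G_K[m]/G_K[m+1],\F_l\big)^F\to H^2(F,\F_l)$. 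Since $\F_l$ is a trivial abelian coefficient module, the surjection $G_K[m]\twoheadrightarrow G_K[m]/G_K[m+1]=G_K[m]^{\ab}$ induces an $F$-equivariant isomorphism $H^1\big(G_K[m]/G_K[m+1],\F_l\big)\xrightarrow{\sim}H^1(G_K[m],\F_l)$, and naturality of the five-term sequence for the evident morphism of extensions shows that this isomorphism intertwines the two transgressions. Hence the two transgressions have the same image in $H^2(F,\F_l)$, which gives (A). This is the point at which the extra abelian layer built into $\mathcal H^2$ is essential.

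For (B): I would invoke the Albert--Brauer--Hasse--Noether theorem (the Hasse principle for Brauer groups of number fields). Passing from $\F_l$- to $\mu_l$-coefficients is harmless away from the prime $2$: the degree $[L(\mu_l):L]$ divides $l-1$, hence is prime to $l$, so $\mathrm{res}\colon H^2(G_L,\F_l)\to H^2(G_{L(\mu_l)},\F_l)$ is injective (a retraction is $\tfrac1{[L(\mu_l):L]}\mathrm{cor}$) and commutes with localization; over $L(\mu_l)$ one has $\F_l\cong\mu_l$, so $H^2(G_{L(\mu_l)},\F_l)\cong\Br(L(\mu_l))[l]$, and for $l$ odd the field $L(\mu_l)$ is totally imaginary, so the archimedean local terms vanish and the Hasse principle yields $\Br(L(\mu_l))[l]\hookrightarrow\bigoplus_{w\text{ finite}}\Br(L(\mu_l)_w)[l]$. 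Being compatible with the localization of $L$, this gives (B) for $l$ odd --- indeed then localization is injective on all of $H^2(G_L,\F_l)$.

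The genuinely delicate case is $l=2$, and I expect it to be the main obstacle. Here $L(\mu_2)=L$ may have real places, and the kernel of $\Br(L)[2]\to\bigoplus_{\p\in\Primes_L}\Br(L_{\p})[2]$ consists exactly of the Brauer classes supported only at the real archimedean places of $L$ (these exist and are nonzero as soon as $L$ has at least two real places). One must rule out the presence of such a class in the image of (A). The key observation is that every class in that image is inflated from $H^2(G(K_m/L),\F_2)$, hence is split by $K_m$, and that $K_m\supseteq\Q^{\ab}$ is totally imaginary (and contains all roots of unity); combined with the global reciprocity law for $L$ (Poitou--Tate, equivalently the exactness of $0\to\Br(L)\to\bigoplus_v\Br(L_v)\to\Q/\Z\to 0$), this should force any such class to vanish. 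This archimedean bookkeeping at $l=2$ --- rather than the cohomological comparison (A), which is formal, or the Hasse principle itself --- is the step requiring the most care, being the one place where the classical local--global principle must be supplemented by input specific to $\mathcal H^2$.
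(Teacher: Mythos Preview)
Your overall architecture --- prove that $\mathcal H^2(F,\F_l)$ injects into $H^2(G_L,\F_l)$ via the Hochschild--Serre comparison, then feed into the global Hasse principle --- is exactly the approach underlying the reference the paper cites (\cite{S-T}, Propositions~1.14 and~1.17), so there is no divergence in strategy. Your argument for (A) via the five-term sequences and the identification $H^1(G_K[m],\F_l)\cong H^1(G_K[m]^{\ab},\F_l)$ is correct and is the standard one.

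There is, however, a genuine gap in your treatment of (B), and it stems from a misreading of the statement. The product is indexed over $\mathfrak{Primes}_L$, which in the paper's notation denotes \emph{all} primes of $L$, archimedean included; the macro $\Primes_L$ you use in your write-up expands to $\mathfrak{Primes}^{\na}_L$, the non-archimedean ones only. (The application of this proposition in the proof of Proposition~\ref{TamagawaTorsion} makes the intended reading unambiguous: there the prime $\p$ produced is ultimately shown to be archimedean.) With archimedean places in the product, the case $l=2$ requires no special pleading: the exact sequence $0\to\Br(L)\to\bigoplus_v\Br(L_v)\to\Q/\Z\to 0$ gives injectivity on all of $H^2(G_L,\mu_2)$, and (B) follows.

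Your proposed rescue for $l=2$ under the non-archimedean reading does not work. A class inflated from $G(K_m/L)$ is split by $K_m$, but at a real place $v$ of $L$ the decomposition group in $G(K_m/L)$ is already all of $G(\C/\R)$, so inflation from it to $G_{L_v}$ is an isomorphism and imposes no constraint on $\mathrm{inv}_v$; equivalently, restriction along $\C/\R$ multiplies the invariant by $2$ and hence kills all $2$-torsion regardless. Thus ``split by a totally imaginary extension'' together with reciprocity cannot force such a class to vanish when $L$ has at least two real places.

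Finally, a minor point: $L$ need not be a number field (in the paper's main application it is an index-$2$ subfield of $K_{m-1}$, hence infinite over $\Q$). Your arguments survive this: the Brauer--Hasse--Noether sequence for an arbitrary algebraic extension of $\Q$ is obtained from the number-field case by passing to the direct limit over finite subextensions, and both inflation and restriction commute with this limit.
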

\begin{proof}
This is a restatement of Proposition 1.17 in \cite{S-T} in the more general case where $l$ is not necessarily odd and $L$ is not necessarily totally imaginary (cf. The proof of Proposition 1.14 in \cite{S-T}). 
\end{proof}

\begin{definition}\label{starldef}
Let $m \ge 2$ be an integer, $l$ a prime number, $F$ a closed subgroup of $G_K^m$. We will say that $F$ satisfies condition $(\star_l)$ (or, that $F$ is a $(\star_l)$-subgroup of $G_K^m$) if the following conditions hold:
\begin{itemize}
\item There is an exact sequence $1 \to U_1 \to F \to U_2 \to 1$, where $U_1$ and $U_2$ are isomorphic to $\Z_l$.
\item $\mathcal{H}^2(F,\F_l)$ is non trivial.
\end{itemize}
We also define the set $\tD_{m,l,K}$ as the set of all closed subgroups of $G_K^m$ satisfying condition $(\star_l)$. 
\end{definition}

\begin{definition}\label{equivdef}
Let $m \ge 2$ be an integer, $l$ a prime number. For $F_1,F_2 \in \tD_{m,l,K}$, we say that $F_1 \approx F_2$ if and only if for any open subgroup $H$ of $G_K^m$ containing $G_K^m[m-1]$, the images of the subgroups $F_1 \cap H$ and $F_2 \cap H$ by the map $H \surj H^{\ab}$ are commensurable. \\
The relation $\approx$ is an equivalence relation on $\tD_{m,l,K}$, and we denote the set of equivalence classes $\tD_{m,l,K}/\approx$ by $\D_{m,l,K}$. \end{definition}
 
\begin{proposition}\label{starlisopen}
Let $m \ge 2$ be an integer, $l$ a prime number. Then, $F$ satisfies condition $(\star_l)$ if and only if there exists a prime $\p \in \Primes_{K_m}$ with $\ch(\p) \neq l$ such that $F$ is an open subgroup of an $l$-Sylow subgroup of the decomposition group $D_{\p} \subset G_K^m$ of $\p$. Such a prime $\p$ need not be unique, however the restriction $\bar{\p} \in \Primes_{K_{m-1}}$ of $\p$ to $K_{m-1}$ is uniquely determined. \\
In particular, a surjective map $\widetilde{\phi}: \tD_{m,l,K} \surj \lPrimes_{K_{m-1}}$ is well-defined. 
\end{proposition}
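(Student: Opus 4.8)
The plan is to obtain the equivalence from the local structure of decomposition groups in $G_K^m$ together with Proposition \ref{localglobal}, and then to read off the uniqueness of $\bar\p$ and the surjectivity of $\widetilde\phi$ from Proposition \ref{mSeparatedness}. For the implication ``$\Leftarrow$'', suppose $\p\in\Primes_{K_m}$ with $p:=\ch(\p)\neq l$, set $\p_0=\p\cap K$, and let $F$ be an open subgroup of an $l$-Sylow subgroup $S$ of $D_\p\isom G(K_{m,\p}/K_{\p_0})$. Since $p\neq l$ the $l$-part of the local Galois group is tamely ramified, so $S$ sits in an exact sequence $1\to\Z_l\to S\to\Z_l\to1$ (tame inertia, resp.\ Frobenius), both pieces being visible already at the $2$-step level because the tame quotient is metabelian; intersecting $F$ with the inertia $\Z_l$ shows $F$ is again an extension of $\Z_l$ by $\Z_l$, the first bullet of $(\star_l)$. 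For the second bullet, the crucial observation is that the fixed field $M$ of $S$ contains $\mu_l$, since $[K_{\p_0}(\mu_l):K_{\p_0}]$ divides $l-1$ and is hence prime to $l$; thus Frobenius acts trivially modulo $l$ on the tame inertia, a Hochschild--Serre computation yields $H^2(S,\F_l)\isom\F_l$, and likewise $H^2(F,\F_l)\neq0$. It then remains to see that this class is not killed by the inflation $H^2(F,\F_l)\to H^2(\widetilde F_0,\F_l)$, i.e.\ that $\mathcal{H}^2(F,\F_l)\neq0$; this is where the extra ``step'' enters, and one argues, as in the proof of Proposition \ref{localglobal} in \cite{S-T}, that the generator of $H^2(F,\F_l)$ is the local invariant class at $\p$ and that the transgression $H^1(N,\F_l)^F\to H^2(F,\F_l)$, with $N=\ker(\widetilde F_0\to F)$, is not surjective.

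For ``$\Rightarrow$'', let $F$ be a $(\star_l)$-subgroup. First, $\mathcal{H}^2(F,\F_l)\neq0$ forces $H^2(F,\F_l)\neq0$, which by Hochschild--Serre means the conjugation action of the quotient $\Z_l$ on the sub $\Z_l$ is trivial modulo $l$; in particular $F$ is a $2$-generated, $2$-dimensional pro-$l$ group. Writing $L=(K_m)^F$ so that the preimage $\widetilde F$ of $F$ in $G_K$ equals $G_L$, Proposition \ref{localglobal} produces a prime $\p$ of $L$ with $H^2(\widetilde F_\p,\F_l)=H^2(G_{L_\p},\F_l)\neq0$; by local duality $\p$ is non-archimedean with $\mu_l\subseteq L_\p$ (the real, $l=2$ case being incompatible with $F$ torsion-free). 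The core step is to upgrade this to: there is a suitable prime $\p'$ of $K_m$ above $\p$ such that $F$ is an open subgroup of an $l$-Sylow subgroup of $D_{\p'}\subset G_K^m$. The idea is that $\mathcal{H}^2(F,\F_l)$, by construction at the $(m+1)$-level, is not subject to a global reciprocity law (unlike the genuine Brauer group), so a nonzero class may be supported at a single prime, and one shows this pins $F$ down inside the decomposition group above $\p$; this is the part I would take over from the methods of \cite{S-T}. Granting it, $\ch(\p')\neq l$ is then automatic: for a prime above $l$ the $l$-Sylow of the decomposition group in $G_K^m$ (with $m\ge2$) is a maximal $m$-step solvable quotient of a free or Demushkin pro-$l$ group of rank $\ge2$, hence infinitely generated, contradicting commensurability with the $2$-generated group $F$.

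The remaining assertions are then formal. If $F$ is an open subgroup of an $l$-Sylow of $D_\p$ and also of an $l$-Sylow of $D_{\p'}$ for $\p,\p'\in\Primes_{K_m}$, then $\{1\}\neq F\subseteq D_\p\cap D_{\p'}$, so by Proposition \ref{mSeparatedness} the images of $\p$ and $\p'$ in $K_{m-1}$ coincide; hence $\widetilde\phi(F):=\bar\p\in\lPrimes_{K_{m-1}}$ (note $\ch\bar\p=\ch\p\neq l$) is well defined. It is surjective because, given $\bar\q\in\lPrimes_{K_{m-1}}$, one picks a prime $\q$ of $K_m$ above $\bar\q$ and an $l$-Sylow $S$ of $D_\q$, so that $S\in\tD_{m,l,K}$ by ``$\Leftarrow$'' and $\widetilde\phi(S)=\bar\q$.

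The main obstacle is the cohomological heart of the equivalence: in ``$\Leftarrow$'', that $H^2(S,\F_l)$ survives inflation to $G_K^{m+1}$ (so that $\mathcal{H}^2(S,\F_l)\neq0$); and in ``$\Rightarrow$'', that a $(\star_l)$-subgroup is genuinely a decomposition piece and not some ``global'' extension of $\Z_l$ by $\Z_l$ inside $G_K^m$ (for instance one coming from a $\Z_l^2$-extension of $K$). Both depend on the local--global principle of Proposition \ref{localglobal} and the finer analysis in \cite{S-T}; the rest is routine local Galois theory together with Proposition \ref{mSeparatedness}.
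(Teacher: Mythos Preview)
The paper does not actually prove this proposition: its entire proof reads ``This is a restatement of \cite{S-T}, Proposition 1.22.'' So there is nothing to compare your argument against beyond the original source. Your sketch is a reasonable outline of how the proof in \cite{S-T} proceeds, and you are candid about the two places where you would have to import their analysis wholesale: the survival of the local class under inflation in the ``$\Leftarrow$'' direction, and the passage from ``some local cohomology is nonzero'' to ``$F$ sits inside a decomposition group'' in the ``$\Rightarrow$'' direction. Those are indeed the substantive steps, and your proposal does not supply them independently, so what you have written is closer to a roadmap of \cite{S-T}, Proposition 1.22 than a self-contained proof. The formal parts (uniqueness of $\bar\p$ via Proposition~\ref{mSeparatedness}, surjectivity of $\widetilde\phi$) you handle correctly.

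One minor point: your argument that $\ch(\p)\neq l$ via ``the $l$-Sylow of $D_\p$ is the $m$-step solvable quotient of a free or Demushkin pro-$l$ group, hence infinitely generated'' is not quite right as stated---Demushkin groups are finitely generated. The correct obstruction is rather that for $\ch(\p)=l$ and $m\ge2$ the $l$-Sylow has $l$-cohomological dimension strictly greater than $2$ (or, equivalently, cannot be commensurable with a $2$-generated group of the shape $\Z_l$-by-$\Z_l$ because the wild ramification contributes extra generators already at the metabelian level). This is how \cite{S-T} rules out residue characteristic $l$.
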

\begin{proof}
This is a restatement of \cite{S-T}, Proposition 1.22.
\end{proof}

We may compose $\widetilde{\phi}$ with the natural map $\lPrimes_{K_{m-1}} \to \Primes_{K_{m-1}}$, so to obtain a map $\tD_{m,l,K} \to \Primes_{K_{m-1}}$ which is not surjective.  

\begin{proposition}\label{starldown}
Let $m \ge 2$ be an integer, $l$ a prime number, and $F \subset G_K^{m+1}$ a $(\star_l)$-subgroup. Consider the image $\overline{F}$ of $F$ by the quotient $G_K^{m+1} \to G_K^m$. Then, $\overline{F}$ satisfies property $(\star_l)$ (i.e. $\overline{F} \in \tD_{m,l,K}$).
\end{proposition}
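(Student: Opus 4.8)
The plan is to avoid verifying the two defining conditions of $(\star_l)$ by hand and instead exploit the group-theoretic characterisation of $(\star_l)$-subgroups furnished by Proposition \ref{starlisopen}. Applying that proposition with $m$ replaced by $m+1$ (legitimate, as $m+1 \ge 2$), the hypothesis that $F$ is a $(\star_l)$-subgroup of $G_K^{m+1}$ says precisely that there is a prime $\p \in \Primes_{K_{m+1}}$ with $\ch(\p) \neq l$ such that $F$ is an open subgroup of some $l$-Sylow subgroup $S$ of the decomposition group $D_{\p} \subset G_K^{m+1}$. It therefore suffices to show that the image $\overline{F}$ of $F$ under the canonical surjection $\pi \colon G_K^{m+1} \surj G_K^m$ is again an open subgroup of an $l$-Sylow subgroup of the decomposition group of some prime of residue characteristic $\neq l$ in $G_K^m$; the reverse implication of Proposition \ref{starlisopen} (this time with the given $m \ge 2$) will then give $\overline{F} \in \tD_{m,l,K}$.

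To carry this out, let $\p' = \p \cap K_m \in \Primes_{K_m}$ be the restriction of $\p$; it has the same residue characteristic as $\p$, so $\ch(\p') \neq l$. By the standard behaviour of decomposition groups in a tower of Galois extensions, $\pi$ restricts to a surjection $D_{\p} \surj D_{\p'} \subset G_K^m$. Next, $\pi(S)$ is a pro-$l$ group, being a continuous image of the pro-$l$ group $S$, and $[D_{\p'} : \pi(S)]$ divides $[D_{\p} : S]$ as supernatural numbers, because $\pi$ maps $D_{\p}$ onto $D_{\p'}$ with $S$ landing in $\pi(S)$; since $[D_{\p} : S]$ is prime to $l$, so is $[D_{\p'} : \pi(S)]$, and hence $\pi(S)$ is an $l$-Sylow subgroup of $D_{\p'}$. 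Finally, $F$ is closed of finite index in $S$, so $\overline{F} = \pi(F)$ is a closed subgroup of $\pi(S)$ with $[\pi(S) : \overline{F}] \le [S : F] < \infty$, hence open in $\pi(S)$. Thus $\overline{F}$ is open in the $l$-Sylow subgroup $\pi(S)$ of $D_{\p'}$, and Proposition \ref{starlisopen} applies.

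I do not expect a serious obstacle here: the argument rests only on two standard facts, namely that $\pi$ sends a decomposition group onto the decomposition group of the restricted prime, and that a continuous surjection of profinite groups sends $l$-Sylow subgroups to $l$-Sylow subgroups (the supernatural-index computation above), together with the preservation of openness under $\pi$. A direct route through Definition \ref{starldef} — descending the exact sequence $1 \to U_1 \to F \to U_2 \to 1$ to $\overline{F} = F/(F \cap \ker\pi)$ and transporting the non-vanishing of $\mathcal{H}^2(F,\F_l)$ through the inflation maps linking levels $m-1$, $m$ and $m+1$ — would require precisely the bookkeeping that Proposition \ref{starlisopen} already packages, so I would not pursue it.
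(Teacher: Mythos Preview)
Your proof is correct and follows the same overall strategy as the paper: apply Proposition~\ref{starlisopen} at level $m+1$ to place $F$ inside an $l$-Sylow of a decomposition group $D_{\p}$, push everything through the quotient $\pi$, and then invoke Proposition~\ref{starlisopen} again at level $m$. The mechanical step in the middle differs slightly. The paper cites \cite{S-T}, Proposition~1.1.(vii), to see that $\ker(D_{\p}\surj D_{\bar\p})$ is pro-$p$ and hence meets the pro-$l$ group $F$ trivially, so $F\isomto\overline{F}$; it then uses $\cd_l(\overline{F})=2$ to force $\overline{F}$ to be open in an $l$-Sylow of $D_{\bar\p}$. You instead use the general profinite fact that a continuous surjection carries an $l$-Sylow onto an $l$-Sylow (via the supernatural-index divisibility $[D_{\p'}:\pi(S)]\mid[D_{\p}:S]$), which gives the conclusion without appealing to the specific structural input from \cite{S-T}. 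Both routes are short; yours is marginally more self-contained, while the paper's makes explicit the isomorphism $F\isomto\overline{F}$, a fact that is reused elsewhere in the paper.
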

\begin{proof}
Let $\p \in \Primes_{K_{m+1}}$ be a prime with decomposition group $D_{\p} \subset G_K^{m+1}$ such that $F \subseteq D_{\p}$ (cf. Proposition \ref{starlisopen}), and let $\bar{\p}$ be its image in $\Primes_{K_m}$. Then, $\ker(D_\p \surj D_{\bar{\p}}) \cap F$ is trivial (cf. \cite{S-T}, Proposition $1.1.(vii)$) which means the quotient map restricts to an isomorphism on $F$, and $\overline{F}$ is an $l$-decomposition like group. However, the image of $D_{\p}$ in $G_K^m$ by the quotient map $G_K^{m+1} \surj G_K^m$ coincides with the decomposition group $D_{\bar{\p}}$ at $\bar{\p}$. Since $\overline{F} \subset D_{\bar{\p}}$ and $\cd_l (F)=2$, $\overline{F}$ is $l$-open in an $l$-Sylow subgroup of $D_{\bar{\p}}$, and by Proposition \ref{starlisopen} this shows $\overline{F}$ satisfies condition $(\star_l)$.
\end{proof}

\begin{proposition}\label{STMapping}
Let $m \ge 2$ be an integer, and $l$ a prime number. Then, the following hold:
\begin{enumerate}
\item The map $\widetilde{\phi}_{m,l,K}: \tD_{m,l,K} \to \lPrimes_{K_{m-1}}$ defined as in Proposition \ref{starlisopen} factors through $\D_{m,l,K}$. In particular, there is a naturally defined map $\phi_{m,l,K}: \D_{m,l,K} \to \Primes_{K_{m-1}}$ which is injective.
\item The map $\phi_{m,l,K}$ restricts to a bijection $\D_{m,l,K} \isomto \lPrimes_{K_{m-1}}$.
\item The map $\phi_{m,l,K}$ is $G_K^m$-equivariant with respect to the natural actions of $G_K^m$ on $\D_{m,l,K}$ and $\Primes_{K_{m-1}}$. In particular, the action of $G_K^m$ on $\D_{m,l,K}$ factors through $G_K^{m-1}$.
\item Let $a \in \D_{m,l,K}$, and let $\bar{\p} = \phi_{m,l,K}(a)$. Then the stabiliser $\Stab(a)=\{g \in G_K^{m-1}\,|\,g a = a\}$ coincides with the decomposition group $D_{\bar{\p}}$ of $\bar{\p}$ in $G_K^{m-1}$. 
\end{enumerate}
\end{proposition}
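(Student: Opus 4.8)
The plan is to deduce all four statements from Propositions \ref{mSeparatedness} and \ref{starlisopen} together with class field theory over the finite subextensions of $K_{m-1}/K$, in the spirit of \cite{S-T}. The heart of the matter is part (1), for which it suffices to prove the equivalence $F_1 \approx F_2 \iff \widetilde{\phi}_{m,l,K}(F_1) = \widetilde{\phi}_{m,l,K}(F_2)$ for $F_1, F_2 \in \tD_{m,l,K}$: injectivity of the induced map $\phi_{m,l,K}$ is then immediate, and that $\widetilde{\phi}_{m,l,K}$ factors through $\D_{m,l,K}$ is the other implication. First I would record that an open subgroup $H \subseteq G_K^m$ containing $G_K^m[m-1]$ is of the form $H = G(K_m/M)$ for a unique finite subextension $K \subseteq M \subseteq K_{m-1}$, and that since $G_K[m-1] \subseteq G_M$ one has $G_K[m] \subseteq \overline{[G_M, G_M]}$, so the maximal abelian extension of $M$ is contained in $K_m$; hence $H^{\ab} \isom G_M^{\ab}$ canonically. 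Writing, via Proposition \ref{starlisopen}, $F_i$ as an open subgroup of an $l$-Sylow of a decomposition group $D_{\p_i} \subset G_K^m$ with $\p_i \in \lPrimes_{K_m}$ and $\widetilde{\phi}_{m,l,K}(F_i) = \bar{\p}_i := \p_i|_{K_{m-1}}$, the image of $F_i \cap H$ in $H^{\ab} \isom G_M^{\ab}$ is, up to commensurability, the $l$-part of the decomposition subgroup of $\p_i|_M$ in $G_M^{\ab}$ (identified by local class field theory with a quotient of the completed local multiplicative group). Thus the equivalence reduces to: for non-archimedean primes $v, w$ of $M$ of residue characteristic $\ne l$, the images of the $l$-parts of their decomposition groups in $G_M^{\ab}$ are commensurable if and only if $v = w$.

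If $\bar{\p}_1 = \bar{\p}_2$, then $\p_1|_M = \p_2|_M$ for every such $M$, and since the primes of $K_m$ above a fixed prime of $M$ are conjugate under $G(K_m/M)$ (the extension being Galois), their decomposition groups are conjugate and hence have the same image in $H^{\ab}$; this gives $F_1 \approx F_2$. Conversely, if $\bar{\p}_1 \ne \bar{\p}_2$, choose a finite $M$ with $K \subseteq M \subseteq K_{m-1}$ separating them, so that $v := \p_1|_M \ne w := \p_2|_M$; the required non-commensurability then follows from the standard fact (a Chebotarev / Grunwald--Wang type statement; cf. \cite{S-T}) that the Frobenius classes at distinct non-archimedean primes are $\Z_l$-independent in $G_M^{\ab}$. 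Concretely, the two images are of $\Z_l$-rank one, generated up to finite index by $\mathrm{Frob}_v$ and $\mathrm{Frob}_w$ respectively (the inertial contributions being finite since the residue characteristics differ from $l$), and these span $\Z_l$-submodules with trivial intersection, so they are not commensurable. Hence $F_1 \not\approx F_2$, completing (1).

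Parts (2)--(4) are then formal. For (2), $\widetilde{\phi}_{m,l,K}$ is already surjective onto $\lPrimes_{K_{m-1}}$ by Proposition \ref{starlisopen}, so the injective map $\phi_{m,l,K}$ of (1) restricts to a bijection $\D_{m,l,K} \isomto \lPrimes_{K_{m-1}}$. For (3), $G_K^m$ acts on $\tD_{m,l,K}$ by conjugation; this descends to $\D_{m,l,K}$ because the relation $\approx$ is manifestly conjugation-stable, and $\widetilde{\phi}_{m,l,K}$, hence $\phi_{m,l,K}$, intertwines the two actions since $g D_{\p} g^{-1} = D_{g\p}$ and the restriction map $\Primes_{K_m} \to \Primes_{K_{m-1}}$ is $G_K^m$-equivariant; as $G_K^m[m-1]$ acts trivially on $\Primes_{K_{m-1}}$ and $\phi_{m,l,K}$ is injective, the action on $\D_{m,l,K}$ factors through $G_K^{m-1}$. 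For (4), injectivity and equivariance of $\phi_{m,l,K}$ give $\{g \in G_K^{m-1} : g a = a\} = \{g \in G_K^{m-1} : g\bar{\p} = \bar{\p}\}$ with $\bar{\p} = \phi_{m,l,K}(a)$, and the latter set is by definition the decomposition group $D_{\bar{\p}}$ of $\bar{\p}$ in $G_K^{m-1}$.

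The main obstacle is the core equivalence in part (1): establishing the canonical identification $H^{\ab} \isom G_M^{\ab}$ together with the precise description there of the image of $F_i \cap H$, and — most delicately — the non-commensurability of decomposition groups at distinct non-archimedean primes inside $G_M^{\ab}$. Everything else is either elementary Galois theory or, as above, a formal consequence of (1).
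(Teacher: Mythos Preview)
The paper does not give a proof of this proposition; it simply records that it is a restatement of \cite[Proposition 1.23]{S-T}. So there is no in-paper argument to compare your proposal against, and what you have written is essentially a reconstruction of (the outline of) the Sa\"idi--Tamagawa proof.

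Your sketch is sound. The identification $H^{\ab}\isom G_M^{\ab}$ for $H=G(K_m/M)$ with $K\subseteq M\subseteq K_{m-1}$ is correct, since $G_K[m]\subseteq\overline{[G_M,G_M]}$ forces $M_1\subseteq K_m$; the reduction of $\approx$ to a statement about images of local decomposition groups in $G_M^{\ab}$ is exactly the right move; and your derivations of (2)--(4) from (1) are clean and correct. The forward implication in (1) is fine as written.

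The only point that deserves more care is precisely the one you flag: the non-commensurability, inside $G_M^{\ab}$, of the $l$-parts of the decomposition groups at two distinct non-archimedean primes $v\neq w$ of residue characteristic $\neq l$. Calling this a ``standard fact'' slightly undersells it. Your description of the images as $\Z_l$-rank-one groups generated up to finite index by the Frobenii is correct (the inertial $l$-contribution being finite since $l$ differs from the residue characteristic), but the assertion that the two Frobenius lines have trivial intersection in $G_M^{\ab}$ is exactly where the content lies: one must produce, for every $n$, an abelian $l$-extension of $M$ in which the Frobenii at $v$ and $w$ behave differently. This is indeed a Grunwald--Wang / id\`ele-class-group argument, and it is carried out in \cite{S-T}; citing it there is appropriate, but you should be aware that it is not a one-line consequence of Chebotarev. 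With that caveat, your proposal is correct and matches the approach of \cite{S-T} that the paper is invoking.
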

\begin{proof}
This is a restatement of \cite{S-T}, Proposition 1.23.
\end{proof}

The authors would like to thank Akio Tamagawa for the following result and its proof, which allows us to restrict the order of the torsion elements in $G_K^m$ and relate them to decomposition groups at archimedean primes.

\begin{proposition}\label{TamagawaTorsion}
Let $m \ge 2$ be an integer, and let $g \in G_K^m$ be a (non-trivial) torsion element. Then, $g$ is of order $2$, and the image $\bar{g}$ of $g$ by the projection $G_K^m \surj G_K^{m-1}$ is non-trivial. Furthermore, the subgroup $\langle \bar{g} \rangle$ of $G_K^{m-1}$ generated by $\bar{g}$ is the decomposition group of an archimedean prime of $K^{m-1}$.
\end{proposition}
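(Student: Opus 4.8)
The proof I have in mind rests on three facts about the tower $K\subseteq K_{m-1}\subseteq K_m$ that hold because $m\ge 2$. Write $A:=G(K_m/K_{m-1})=\ker(G_K^m\surj G_K^{m-1})$. Then: (i) $K_m$ is the maximal abelian extension of $K_{m-1}$ (immediate from $G_K[m]=\overline{[G_K[m-1],G_K[m-1]]}$ and $G_K[m-1]=G_{K_{m-1}}$), so $A=G_{K_{m-1}}^{\ab}$; (ii) $\mu_\infty\subseteq K_1\subseteq K_{m-1}$, hence, using (i) and Kummer theory, $K_m\supseteq K_{m-1}(\sqrt[n]{c})$ for every $c\in K_{m-1}^\times$ and every $n\ge 1$; (iii) $A$ is torsion-free, since by (ii) and Kummer theory its pro-$p$ part is the Pontryagin dual of the divisible group $K_{m-1}^\times\otimes\Q_p/\Z_p$. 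From (iii) it follows at once that for a nontrivial torsion element $g\in G_K^m$ the image $\bar g\in G_K^{m-1}$ is nontrivial with $\ord(\bar g)=\ord(g)$, and that, setting $N:=(K_m)^{\langle g\rangle}$ and $N_1:=N\cap K_{m-1}=(K_{m-1})^{\langle\bar g\rangle}$, one has $K_m=N\cdot K_{m-1}$ and $[K_{m-1}:N_1]=\ord(g)$ by elementary Galois theory.

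The first step is to show $\ord(g)=2$. If an odd prime $l$ divided $\ord(g)$, then, replacing $g$ by a power, I may assume $\ord(g)=l$; the mod-$l$ cyclotomic character restricted to $\langle g\rangle$ has image of order dividing $\gcd(l,l-1)=1$, so $\mu_l\subseteq N_1$, and Kummer theory gives $K_{m-1}=N_1(\sqrt[l]{a})$ with $a\in N_1^\times\setminus(N_1^\times)^l$, whence also $K_m=N(\sqrt[l]{a})$. Since $a\in K_{m-1}^\times$, fact (ii) gives $\sqrt[l^2]{a}\in K_m$, so that $\sqrt[l]{a}$ is an $l$-th power in $K_m$ (using $\mu_{l^2}\subseteq K_m$); applying $N_{K_m/N}$, which sends $\sqrt[l]{a}$ to $a$ because $l$ is odd, forces $a\in(N^\times)^l$ --- a contradiction. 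Thus $\ord(g)$ is a power of $2$. To exclude order $4$ I would first record the lemma that every order-$2$ element $g'$ of $G_K^m$ satisfies $g'(\sqrt{-1})=-\sqrt{-1}$: writing $K_{m-1}=L'(\sqrt{b})$ with $L':=(K_{m-1})^{\langle\bar{g'}\rangle}$ and $\bar{g'}(\sqrt b)=-\sqrt b$ (here $\bar{g'}\ne 1$ by (iii)), the element $\sqrt[4]{b}$ (a square root of $\sqrt b$) lies in $K_m$ by (ii) and satisfies $g'(\sqrt[4]{b})^2=g'(\sqrt b)=-(\sqrt[4]{b})^2$, so $g'(\sqrt[4]{b})=\pm\sqrt{-1}\cdot\sqrt[4]{b}$; applying $g'$ once more and using $g'^2=1$ yields $g'(\sqrt{-1})\cdot\sqrt{-1}=1$. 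Now if $g$ had order $4$ then $g^2$ has order $2$, and, with $M:=(K_m)^{\langle g\rangle}$ and $M_2:=(K_m)^{\langle g^2\rangle}=M(\sqrt c)$, the lemma gives $K_m=M_2(\sqrt{-1})=M(\sqrt c,\sqrt{-1})$; one checks $\sqrt c,\sqrt{-1},\sqrt{-c}\notin M$, so $G(K_m/M)\isom(\Z/2)^2$, contradicting $G(K_m/M)=\langle g\rangle\isom\Z/4$. Passing from $g$ to $g^{2^{k-2}}$ excludes order $2^k$ for $k\ge 3$ as well, so $\ord(g)=2$.

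Now let $\ord(g)=2$ and $M:=(K_m)^{\langle g\rangle}$; the lemma gives $\sqrt{-1}\notin M$ and $K_m=M(\sqrt{-1})$. The heart of the matter is to deduce that $M$ is formally real, which I would do via the Artin--Schreier theorem: choose (Zorn) a subfield $R\subseteq\K$ maximal with respect to $M\subseteq R$ and $\sqrt{-1}\notin R$; then $R$ is real closed, hence so is every subfield, so $M$ is formally real. Since $M$ is algebraic over $\Q$, an ordering of $M$ yields an embedding $M\inj\R$, i.e.\ an archimedean prime $u$ of $M$ with $M_u=\R$; extending to $K_m=M(\sqrt{-1})\inj\C$ gives an archimedean prime $w$ of $K_m$ over $u$ with $(K_m)_w=\C$. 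As $[K_m:M]=2=[(K_m)_w:M_u]$ this $w$ is unique, so $g$ (which fixes $M$, hence $w$) lies in the decomposition group $D_w\subseteq G_K^m$; and $D_w\isom G(\C/K_v)$ with $v:=w|_K$ real (because $K_v\subseteq M_u=\R$), so $D_w\isom\Z/2$ and therefore $\langle g\rangle=D_w$. Pushing $D_w$ forward along $G_K^m\surj G_K^{m-1}$ yields the decomposition group $D_{w'}$ of $w':=w|_{K_{m-1}}$, so $\langle\bar g\rangle=D_{w'}$; since $\zeta_3\in K_{m-1}$, the field $K_{m-1}$ is totally imaginary, so $w'$ is a complex prime over the real prime $v$, whence $D_{w'}\isom G(\C/\R)\isom\Z/2$ is nontrivial. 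This gives all three assertions.

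The main obstacle --- and the reason $m\ge 2$ is indispensable --- is the passage from ``$g$ is torsion in the \emph{quotient} $G_K^m$'' to concrete statements about radicals: a root $\sqrt[n]{x}$ of $x\in K_m^\times$ need not lie in $K_m$, so one is forced to descend to $K_{m-1}$, and this descent is available only because $K_m$ is the maximal abelian extension of $K_{m-1}$, $\mu_\infty\subseteq K_{m-1}$, and $G(K_m/K_{m-1})$ is torsion-free (the last point being what allows one to replace $g$ by an honest finite-order automorphism $\bar g$ of $K_{m-1}$ with a controlled fixed field). I expect the bookkeeping around the exact radical one takes --- always an element of $K_{m-1}^\times$, never of $K_m^\times$ --- together with the invocation of Artin--Schreier for the formal-reality step, to be the most delicate parts.
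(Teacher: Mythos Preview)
Your argument that $\ord(g)=2$ and $\bar g\ne 1$ is correct and takes a different route from the paper: where the paper controls torsion via cohomological-dimension bounds coming from the cyclotomic $\widehat{\Z}$-extension of $K$ (with a case split on whether $K$ is totally imaginary), you argue directly with Kummer theory and explicit radicals. The torsion-freeness of $G(K_m/K_{m-1})$ via divisibility of $K_{m-1}^\times\otimes\Q_p/\Z_p$, the norm trick ruling out odd $l$, and the $\sqrt[4]{b}$ computation forcing $g(\sqrt{-1})=-\sqrt{-1}$ and excluding order $4$ all go through.

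The gap is in the archimedean step. The claim that a field $R\subseteq\K$ maximal for $M\subseteq R$ and $\sqrt{-1}\notin R$ must be real closed is not justified: Artin--Schreier yields this only once one already knows $[\K:R]<\infty$, and maximality alone does not force that. Concretely, your lemma pins down $g(\sqrt{-1})=-\sqrt{-1}$ but says nothing about, say, $g(\sqrt{-2})$ or $g(\zeta_3)$. If $\sqrt{-2}$ happened to lie in $M$ then $-1=(\tfrac12\sqrt{-2})^2+(\tfrac12\sqrt{-2})^2$ would show $M$ is not formally real, hence no $R\supseteq M$ is formally real; yet $\sqrt{-1}\notin M$, so Zorn still produces a maximal such $R$, and that $R$ must then satisfy $R(\sqrt{-1})\subsetneq\K$. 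Ruling out such possibilities --- i.e.\ proving that $M$ is formally real --- is precisely the assertion that $\langle\bar g\rangle$ is an archimedean decomposition group, so the argument is circular as it stands.

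The paper handles this step cohomologically rather than field-theoretically: since the preimage of $\overline T=\langle\bar g\rangle$ in $G_K^m$ contains the $2$-torsion group $\langle g\rangle$, one has $\mathcal{H}^2(\overline T,\F_2)\ne 0$, and the local--global injection of Proposition~\ref{localglobal} produces a prime $\p$ with nontrivial local $H^2$; a short check (torsion-freeness of non-archimedean decomposition groups in $G_K^{m-1}$ for $m\ge 3$, or vanishing of $H^2$ of inertia for $m=2$) then forces $\p$ to be archimedean.
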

\begin{proof}
Let $\widehat{\Q}$ be the unique $\widehat{\Z}$-extension of $\Q$, and let $\widehat{K}$ be the composite extension $K \widehat{\Q}$ of $\Q$. By construction, $G(\widehat{K}/K)$ is given by an open subgroup of $G(\widehat{\Q}/\Q) \isom \widehat{\Z}$, and so isomorphic to $\widehat{\Z}$ itself. Observe that $\widehat{K}$ is an abelian extension of $K$. \\ By (\cite{SerreGC}, II, Proposition 9), as $p^{\infty} \, |\, [\widehat{K}:K]$ for all prime numbers $p$, we have $\cd_p G_{\widehat{K}} \le 1$ and in particular $\cd G_{\widehat{K}} \le 1$ (cf. Corollary 8.1.18 in \cite{NWS}). Then $G_{\widehat{K}}^{\ab}$ is a free abelian group, and in particular it is torsion-free. The same argument also holds true for any separable extension of $\widehat{K}$. \\ 
Let us take a torsion element $g \in G_K^m$ and denote by $n$ its order ($g$ is possibly trivial). We first show the former part of the statement, namely that if $g$ is non-trivial it is of order $2$. We will start assuming $m=2$, and then show inductively the case $m > 2$. \\
Let us then assume $m=2$, and also $K$ is totally imaginary. As $\widehat{\Q}$ is necessarily totally real, $K$ and $\widehat{\Q}$ are disjoint extension of $\Q$ so $G(\widehat{K}/K) \isom \widehat{\Z}$. Consider the absolute Galois groups $G_K$ and $G_{\widehat{K}}$, and the maximal abelian extension $\widehat{K}^{\ab}$ of $\widehat{K}$. This extension is, by construction, a metabelian Galois extension of $K$, therefore there exists a quotient $G_K^2 \surj G(\widehat{K}^{\ab}/K)$. Furthermore, since $\widehat{K}/K$ is abelian, it is a subextension of $K^{\ab}/K$, and as the extension $K^{\ab}/\widehat{K}$ is necessarily also abelian, it is a subextension of $\widehat{K}^{\ab}/\widehat{K}$. We then have a commutative diagram \begin{center}
\begin{tikzcd} 
1 \arrow[r] & G_K^2[1] \arrow[r] \arrow[d] & G_K^2 \arrow[r] \arrow[d, two heads] & G_K^{\ab} \arrow[r] \arrow[d, two heads] & 1 \\
1 \arrow[r] & G_{\widehat{K}}^{\ab} \arrow[r] & G(\widehat{K}^{\ab}/K) \arrow[r] \arrow[ur,two heads] & G(\widehat{K}/K) \arrow[r] & 1
\end{tikzcd}
\end{center}
where $G(\widehat{K}/K)$ is torsion-free by construction ($\isom \widehat{\Z}$), and $G_{\widehat{K}}^{\ab}$ and $G_K^2[1]=G_{K^{\ab}}^{\ab}$ are torsion-free by the argument at the start of the proof. \\ It follows that $G(\widehat{K}^{\ab}/K)$ is torsion-free. We then have the image of $g$ in the quotient $G_K^2 \surj G(\widehat{K}^{\ab}/K)$ is trivial, and by commutativity in the diagram it follows that $g \in G^2_K[1]$. However $G_K^2[1] = G_{K^{\ab}}^{\ab}$ is torsion-free as well, which shows $g$ can only be trivial. \\
Let us assume now $K$ is not totally imaginary. The composite $\widehat{K}/K$ is still a Galois extension with Galois group $\hat{\Z}$, and the extensions $\widehat{K}$ and $K(i)$ are disjoint over $K$, and we consider the composite $\widehat{K}' = \widehat{K}\cdot K(i)$. Then, the extension $\widehat{K}'/K$ has Galois group $\widehat{\Z} \times \Z/2\Z$, and in particular it is an abelian extension of $K$. Replacing $\widehat{K}$ with $\widehat{K}'$ in the argument above yields a commutative diagram
\begin{center}
\begin{tikzcd} 
1 \arrow[r] & G_K^2[1] \arrow[r] \arrow[d] & G_K^2 \arrow[r] \arrow[d, two heads] & G_K^{\ab} \arrow[r] \arrow[d, two heads] & 1 \\
1 \arrow[r] & G_{\widehat{K}'}^{\ab} \arrow[r] & G({\widehat{K}'}\hspace{0pt}^{\ab}/K) \arrow[r] \arrow[ur,two heads] & G(\widehat{K}'/K) \arrow[r] & 1
\end{tikzcd}
\end{center}
and either $g$ is trivial or $g \not\in G_K^2[1]$ (as above), which means $g$ maps to a non-trivial element in $G_K^{\ab}$ of order $n$. As this map factor through $G_K^2 \surj G({{\widehat{K}'}}\hspace{0pt}^{\ab}/K)$, denote by $\hat{g}$ the image of $g$ through this map (which needs to also be of order $n$) and as $\hat{g} \not\in G_{\widehat{K}'}^{\ab}$, it is mapped to an element in a torsion subgroup of $G(\widehat{K}'/K)$. However, this implies that $\ord(g) \le 2$ so either $g$ is again trivial, or $g$ is of order $2$. \\
For $m > 2$, we may use the exact sequence \[ 1 \to G_K^m[m-1] \to G_K^m \to G_K^{m-1} \to 1 \] together with the fact that $G_K^m[m-1] = G_{K_m}^{\ab}$ is torsion-free to show that any non-trivial torsion element $g \in G_K^m$ is mapped to a non-trivial torsion element in $G_K^{m-1}$ of the same order, and so any non-trivial torsion element in $G_K^m$ is of order $2$ as well. \\
We now need to prove that the image $\bar{g}$ of $g$ by the quotient map $G_K^m \surj G_K^{m-1}$ corresponds to complex conjugation. Assume then that $g$ is an element of order $2$, and let $T=\langle g \rangle $ which is isomorphic to $\Z / 2\Z$. Let $\overline{T} = \langle \bar{g} \rangle$ be the image of $T$ by the quotient map $G_K^m \surj G_K^{m-1}$. Naturally, the inverse image of $\overline{T}$ by this quotient must contain the torsion group $T$, so the group $\mathcal{H}^2(\overline{T},\F_2)$ (cf. Definition \ref{curlyh}) will be non-trivial. \\
Let $\widetilde{T}$ be the inverse image of $\overline{T}$ by the quotient $G_K \surj G_K^{m-1}$, and let $L$ be the subextension of $K_{m-1}/K$ fixed by $\overline{T}$. By Proposition \ref{localglobal}, we have an injective map \[ \mathcal{H}^2(\overline{T},\F_2) \to \prod_{\p \in \mathfrak{Primes}_L} H^2(\widetilde{T}_\p,\F_2) \] and so there must exist a prime $\p$ for which the image in $H^2(\widetilde{T}_\p,\F_2)$ is non-trivial, which implies $\widetilde{T}_{\p}$ is non-trivial. \\ We may then take the image $\overline{T}_{\p}$ of $\widetilde{T}_{\p}$ in $G_K^{m-1}$, which must necessarily contain $\bar{g}$ and must be contained in some decomposition group $D_{\p} \subset G_K^{m-1}$ (by abuse of notation, we denote both the prime in $L$ and its image in $K$ by $\p$).\\
If $m-1 \ge 2$, $\bar{g} \in \overline{T}_{\p}$ implies $\p$ is necessarily archimedean as  decomposition groups of non-archimedean primes in $G_K^{m-1}$ are torsion-free (\cite{S-T}, Proposition 1.1.$(x)$) . \\
If $m-1=1$, that is $m=2$, then $\bar{g}$ needs to be contained in $I_{\p}$ as it is mapped trivially by the quotient $D_{\p} \surj D_{\p}^{\ur} \isom \widehat{\Z}$. The inverse image of $\overline{T}$ in the decomposition group $G_{\p} \subset G_K$ then also has to be contained in the inertia subgroup of $G_{\p}$, however for all non-archimedean primes $H^2(I_{\p},\F_2)=0$, which implies $\p$ is an archimedean prime as desired.
\end{proof}

\section{Homomorphisms of m-step solvable Profinite Groups}

In this section, we look at a few basic properties of homomorphisms of maximal $m$-step solvable Galois groups, and how they restrict to $(\star_l)$-subgroups.
\begin{lemma}\label{DiagramLemma}
Let $m \ge 1$ be an integer, $G$ and $H$ be profinite groups and $\sigma: G \to H$ a homomorphism of profinite groups. Then, there is a unique homomorphism of profinite groups $\bar{\sigma}: G^m \to H^m$ so that we have a commutative diagram \begin{center} \begin{tikzcd}
G \arrow["\sigma",r] \arrow[two heads,d] & H \arrow[two heads,d] \\
G^m \arrow["\bar\sigma",r] & H^m 
\end{tikzcd}
\end{center}
\end{lemma}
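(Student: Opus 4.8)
The plan is to run the standard universal-property-of-the-quotient argument, the only mild subtlety being that the $G[m]$ are defined via \emph{closures} of commutator subgroups, so continuity of $\sigma$ must be invoked at each step. Concretely, the heart of the matter is the claim that a continuous homomorphism $\sigma\colon G\to H$ satisfies $\sigma(G[m])\subseteq H[m]$ for all $m\ge 1$; granting this, the lemma follows formally.

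First I would prove that claim by induction on $m$. For $m=1$, any group homomorphism sends $[G,G]$ into $[H,H]$ (it sends a commutator $[x,y]$ to $[\sigma(x),\sigma(y)]$), hence $\sigma([G,G])\subseteq[H,H]$; since $\sigma$ is continuous and $H[1]=\overline{[H,H]}$ is closed, we get $\sigma(G[1])=\sigma(\overline{[G,G]})\subseteq\overline{\sigma([G,G])}\subseteq\overline{[H,H]}=H[1]$. For the inductive step, assume $\sigma(G[m-1])\subseteq H[m-1]$. Then $\sigma$ restricts to a homomorphism $G[m-1]\to H[m-1]$, so by the $m=1$ case applied to this restriction, $\sigma\big(\overline{[G[m-1],G[m-1]]}\big)\subseteq\overline{[H[m-1],H[m-1]]}$, i.e. $\sigma(G[m])\subseteq H[m]$, as desired.

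Next I would compose $\sigma$ with the canonical projection $\pi_H\colon H\twoheadrightarrow H^m=H/H[m]$ to obtain a continuous homomorphism $\pi_H\circ\sigma\colon G\to H^m$. By the claim, $G[m]\subseteq\sigma^{-1}(H[m])=\ker(\pi_H\circ\sigma)$, so $\pi_H\circ\sigma$ factors through the projection $\pi_G\colon G\twoheadrightarrow G^m$, yielding a homomorphism $\bar\sigma\colon G^m\to H^m$ with $\bar\sigma\circ\pi_G=\pi_H\circ\sigma$, which is exactly the asserted commutativity of the square. Continuity of $\bar\sigma$ is automatic: $G^m$ carries the quotient topology, and a set-map out of a topological quotient is continuous precisely when its precomposition with the quotient map is, which holds here since $\pi_H\circ\sigma$ is continuous. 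Finally, uniqueness is immediate from surjectivity of $\pi_G$: if $\bar\sigma'$ also satisfies $\bar\sigma'\circ\pi_G=\pi_H\circ\sigma=\bar\sigma\circ\pi_G$, then $\bar\sigma'$ and $\bar\sigma$ agree on the image of $\pi_G$, which is all of $G^m$.

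There is no real obstacle here; the one point worth stating carefully in the write-up is the passage to closures in the inductive claim, since $G[m]$ and $H[m]$ are topological commutator subgroups rather than abstract ones, and this is exactly where continuity of $\sigma$ is used.
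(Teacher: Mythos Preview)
Your proof is correct and follows essentially the same route as the paper's own argument: show $\sigma(G[m])\subseteq H[m]$ and then factor through the quotient. The paper compresses this into a single line, asserting $\sigma(G[m])=\sigma(G)[m]\subseteq H[m]$ without further comment, whereas you spell out the induction and the role of continuity in passing to closures; your version is a more careful write-up of the same idea rather than a different approach.
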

\begin{proof}
Naturally, we have $\sigma(G[m]) = \sigma(G)[m] \subseteq H[m]$. Then, if we consider the composite $G \to H \surj H^m$, we may observe that $G[m]$ is contained in the kernel of the composite. The statement then follows.
\end{proof}
For the rest of the section, we will denote by $K$ and $L$ two number fields, and we will consider their maximal $m$-step solvable Galois groups $G_K^m$ and $G_L^m$.
\begin{proposition}\label{Diagram}
Let $m \ge 1$ be an integer, and $\sigma_m: G_K^m \to G_L^m$ a homomorphism of profinite groups. Then, there exists a unique homomorphism of profinite groups $\sigma_{m-1}: G_K^{m-1} \to G_L^{m-1}$ such that the following diagram, where the vertical arrows are the canonical projection, is commutative:
\begin{center}
\begin{tikzcd}
G_K^{m} \arrow[r,"\sigma_{m}"] \arrow[two heads,d] & G_L^{m} \arrow[two heads,d] \\
G_K^{m-1} \arrow[r,"\sigma_{m-1}"] & G_L^{m-1}
\end{tikzcd}
\end{center}
Furthermore, if $\sigma_m(G_K^m)$ is open in $G_L^m$, then $\sigma_{m-1}(G_K^{m-1})$ is open in $G_L^{m-1}$.
\end{proposition}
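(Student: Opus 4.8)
The plan is to realise $G_K^{m-1}$ as the maximal $(m-1)$-step solvable quotient of the group $G_K^m$ and then invoke Lemma~\ref{DiagramLemma}. The key --- and essentially the only non-formal --- point is that the canonical projection $G_K^m \surj G_K^{m-1}$ has kernel exactly $(G_K^m)[m-1]$; equivalently, that $1 \to G_K^m[m-1] \to G_K^m \to G_K^{m-1} \to 1$ is the exact sequence already used in the proof of Proposition~\ref{TamagawaTorsion}. This follows from the standard facts that a continuous homomorphism $\phi : G \to H$ of profinite groups satisfies $\phi(G[j]) \subseteq H[j]$ for every $j$, with equality when $\phi$ is surjective (both proved by a short induction, using that continuous images of compact sets are closed; cf. the proof of Lemma~\ref{DiagramLemma}): applying the surjective case to $G_K \surj G_K^m = G_K/G_K[m]$ gives $(G_K^m)[m-1] = G_K[m-1]G_K[m]/G_K[m] = G_K[m-1]/G_K[m]$, and since $G_K[m] \subseteq G_K[m-1]$ this is precisely the kernel of $G_K^m \surj G_K^{m-1}$. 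Hence $(G_K^m)^{m-1} = G_K^{m-1}$ and, likewise, $(G_L^m)^{m-1} = G_L^{m-1}$, with the canonical projections equal to the vertical maps of the diagram.

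Granting this, the existence and uniqueness of $\sigma_{m-1}$ are immediate: Lemma~\ref{DiagramLemma}, applied to $\sigma_m : G_K^m \to G_L^m$ with the integer in that lemma taken to be $m-1$, produces a unique homomorphism $(G_K^m)^{m-1} \to (G_L^m)^{m-1}$ compatible with the projections, and by the previous paragraph this is exactly the desired $\sigma_{m-1} : G_K^{m-1} \to G_L^{m-1}$. (For $m=1$ there is nothing to prove, as $G_K^0$ and $G_L^0$ are trivial.) Alternatively one argues directly: the composite $G_K^m \xrightarrow{\sigma_m} G_L^m \surj G_L^{m-1}$ kills $(G_K^m)[m-1]$, because $\sigma_m\!\left((G_K^m)[m-1]\right) \subseteq (G_L^m)[m-1] = \ker\!\left(G_L^m \surj G_L^{m-1}\right)$, so it factors through $G_K^m \surj G_K^m/(G_K^m)[m-1] = G_K^{m-1}$; uniqueness holds since $G_K^m \surj G_K^{m-1}$ is surjective.

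For the last assertion, write $\pi : G_L^m \surj G_L^{m-1}$ for the canonical projection, so that commutativity of the diagram gives $\sigma_{m-1}(G_K^{m-1}) = \pi\!\left(\sigma_m(G_K^m)\right)$. If $\sigma_m(G_K^m)$ is open in the profinite group $G_L^m$, then it is of finite index and compact, hence its continuous image $\pi(\sigma_m(G_K^m))$ is a compact --- therefore closed --- subgroup of $G_L^{m-1}$ of index at most $[\,G_L^m : \sigma_m(G_K^m)\,] < \infty$; a closed finite-index subgroup of a profinite group is open, so $\sigma_{m-1}(G_K^{m-1})$ is open in $G_L^{m-1}$. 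I do not anticipate any real obstacle: this proposition just packages the functoriality of the formation of maximal $m$-step solvable quotients along the tower $\cdots \surj G_K^{m+1} \surj G_K^m \surj G_K^{m-1} \surj \cdots$, and the one thing to be careful with is the identification of the kernels of the transition maps carried out in the first paragraph.
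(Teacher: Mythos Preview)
Your proof is correct and, for the existence and uniqueness of $\sigma_{m-1}$, proceeds exactly as the paper does: both reduce to Lemma~\ref{DiagramLemma}. You are simply more explicit than the paper about the identification $(G_K^m)^{m-1}=G_K^{m-1}$ (i.e.\ $\ker(G_K^m\surj G_K^{m-1})=(G_K^m)[m-1]$), which the paper takes for granted.

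For the openness assertion the approaches diverge slightly. The paper argues via the Galois correspondence: if $\widetilde{L}\subseteq L_m$ is the finite extension of $L$ corresponding to $\sigma_m(G_K^m)$, then the image of the composite $G_K^m\to G_L^m\surj G_L^{m-1}$ corresponds to $\widetilde{L}\cap L_{m-1}$, which is finite over $L$, hence open. Your argument is purely topological/group-theoretic: $\pi(\sigma_m(G_K^m))$ is compact, hence closed, of index $\le[G_L^m:\sigma_m(G_K^m)]<\infty$, hence open. Both are valid; the paper's version keeps the field-theoretic interpretation visible (which is used repeatedly later, e.g.\ in Remark~\ref{quotients}), while yours is more self-contained and works verbatim for arbitrary profinite groups without invoking the number-field setting.
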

\begin{proof}
The first part of the statement follows from Lemma \ref{DiagramLemma}. \\ 
Let $\widetilde{L}$ be the finite extension of $L$ corresponding to the image of $\sigma_m$. Then, it follows that the image of the composite $G_K^m \to G_L^m \surj G_L^{m-1}$ corresponds to the field $\widetilde{L} \cap L_{m-1}$ which is finite over $L$. It follows then that the image of $\sigma_{m-1}$ is open by commutativity of the diagram. \end{proof}

\begin{remark}\label{quotients}
With the notation of Proposition \ref{Diagram}, we will say that $\sigma_{m-1}$ is induced from $\sigma_m$. \\
Furthermore, if $L'$ is an extension of $L$ contained in $L_m$, and $K'$ is the field corresponding to $\sigma^{-1}_m (G(L_m/L'))$, we will say that $L'$ corresponds to $K'$ by $\sigma_m$. If $L'/L$ is a Galois extension, then $K'/K$ is also Galois, and in particular by taking the quotients with the respective Galois groups, we have an injective homomorphism $\sigma: G(K'/K) \to G(L'/L)$ naturally induced from $\sigma_m$. When $\sigma_m$ is surjective, this map $\sigma$ is an isomorphism.\end{remark}

We want to study the possible images of $(\star_l)$-subgroups of $G_K^m$ by a homomorphism $\sigma_m: G_K^m \to G_L^m$. It is useful to recall first what the possible images of an $l$-decomposition-like group $F \subset G_1$ with respect to a homomorphism of profinite groups $\sigma: G_1 \to G_2$ are (i.e. the possible quotients of $F$). These images are characterised as in the following classification, similar to the one presented by Uchida in (\cite{UchidaHom}, Section 1). \\
We let $U$ be the unique maximal abelian normal subgroup of $F$ (which we know is isomorphic to $\Z_l$), and $V = F/U$. By construction, $V$ is also isomorphic to $\Z_l$. We also set $H = F \cap \ker(\sigma)$.
Then, exactly one of the following is true for $\sigma(F)$:
\begin{enumerate}
\item\label{declikeinj} $\sigma(F)$ fits in an exact sequence $1 \to \Z_l \to \sigma(F) \to \Z_l \to 1$. In this case, $\sigma$ restricts to an injective map on $F$, that is $F $ is isomorphic to $\sigma(F)$.
\item $\cd(\sigma(F)) \le 1$. In this case either of the following holds:\begin{enumerate}
\item $H=F$, that is $\sigma(F)$ is trivial.
\item $H=U$, and $\sigma(F)$ is isomorphic to $\Z_l$.
\end{enumerate}
\item\label{ClassTorsion} $\sigma(F)$ contains torsion elements, and so $\cd(\sigma(F))=\infty$. In particular, we are in exactly one of the following cases:
\begin{enumerate}
\item $H \supset U$, and the map $F \surj \sigma(F)$ factors through $V$. In particular, $\sigma(F)$ is finite, non-trivial and cyclic.
\item $H$ is a non trivial proper open subgroup of $U$, and there is an exact sequence $1 \to E \to \sigma(F) \to \Z_l \to 1$, where $E$ is a non-trivial finite cyclic $l$-group.
\item $H$ is a proper open subgroup of $F$ and $H \cap U$ is a proper subgroup of $U$. Then, $\sigma(F)$ is metabelian and for $E_1,E_2$ non-trivial finite cyclic $l$-groups $\sigma(F)$ fits in an exact sequence $1 \to E_1 \to \sigma(F) \to E_2 \to 1$.
\end{enumerate}
\end{enumerate}
Thanks to Proposition \ref{TamagawaTorsion}, if we have a homomorphism $\sigma_m: G_K^m \to G_L^m$ for $m \ge 2$, we can exclude that case \ref{ClassTorsion} happens, unless $l=2$. \\ 
We also have the following consequence of case \ref{declikeinj} in the above classification:
\begin{lemma}\label{zlinj}
Let $F$ and $F'$ be $l$-decomposition like groups, and $\sigma: F \to F'$ a surjective homomorphism of profinite groups. Then, $\sigma$ is an isomorphism.
\end{lemma}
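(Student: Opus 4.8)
The plan is to invoke the classification of quotients of an $l$-decomposition-like group recalled just before the lemma, applied to the homomorphism $\sigma$ itself: take $G_1 = F$, $G_2 = F'$, and let the $l$-decomposition-like subgroup of $G_1$ be all of $F$. Since $\sigma$ is surjective we have $\sigma(F) = F'$, so the classification tells us that $F'$ falls into exactly one of the three cases \eqref{declikeinj}, $(2)$, $(3)$ listed there, and the goal becomes to rule out all but the first.

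First I would exclude case $(2)$: there $\sigma(F)$ is either trivial or isomorphic to $\Z_l$, hence abelian, which contradicts the requirement in the definition of $l$-decomposition-like that $F'$ be non-abelian. Next I would exclude case $(3)$, in which $\sigma(F)$ contains a non-trivial torsion element. Here I use that $F'$ sits in an exact sequence $1 \to \Z_l \to F' \to \Z_l \to 1$: any torsion element of $F'$ maps to a torsion element of the quotient $\Z_l$, hence to $0$, so it lies in the normal copy of $\Z_l$, which is torsion-free; thus $F'$ is torsion-free and case $(3)$ cannot occur. Therefore we are necessarily in case \eqref{declikeinj}, which asserts precisely that $\sigma$ restricts to an injective map on $F$.

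Finally, since $\sigma \colon F \to F'$ is injective by the previous step and surjective by hypothesis, it is a continuous bijective homomorphism of profinite groups; a continuous bijection from a compact space to a Hausdorff space is a homeomorphism, so $\sigma$ is an isomorphism of profinite groups, as desired.

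I do not expect any real obstacle in this argument: the substance is entirely carried by the classification already established, and the only supplementary inputs are the elementary observations that an $l$-decomposition-like group is non-abelian (by definition) and torsion-free (the one-line argument above), plus the standard automatic-homeomorphism fact for continuous bijections of profinite groups. If anything, the only point worth stating carefully is that the classification may legitimately be applied with the ambient group $G_1$ taken to be $F$ itself.
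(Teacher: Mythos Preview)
Your proposal is correct and is precisely the argument the paper has in mind: the lemma is stated there simply as ``a consequence of case~\ref{declikeinj} in the above classification,'' with no further proof given, and your write-up just spells out why cases $(2)$ and $(3)$ are excluded (non-abelianity and torsion-freeness of $F'$), which is exactly the intended reasoning.
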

When we are working with $(\star_l)$-subgroups, the above Lemma specifies to the following result:
\begin{proposition}\label{MapStarl}
Let $m \ge 2$ be an integer $\sigma_{m+1}: G_K^{m+1} \to G_L^{m+1}$ a homomorphism of profinite groups, and consider the homomorphism $\sigma_m: G_K^m \to G_L^m$ induced from $\sigma_{m+1}$. \\
Let $F \in \tD_{m,l,K}$ be a $(\star_l)$-subgroup of $G_K^m$, and assume $\sigma_m$ restricts to an injection on $F$. Then, $F' =\sigma_m(F) \in \tD_{m,l,L}$, that is $F'$ satisfies condition $(\star_l)$.
\end{proposition}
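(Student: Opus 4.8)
The plan is to verify the two defining conditions of Definition~\ref{starldef} for $F'$ directly, with the extra layer $\sigma_{m+1}$ entering only to transport the cohomological condition. The first condition is immediate: $\sigma_m$ is injective on $F$ by hypothesis and $F' = \sigma_m(F)$ by definition, so $\sigma_m$ restricts to an isomorphism of profinite groups $F \isomto F'$ (the image of the compact set $F$ is closed, so $F'$ is a closed subgroup of $G_L^m$). Since $F \in \tD_{m,l,K}$ fits in an exact sequence $1 \to U_1 \to F \to U_2 \to 1$ with $U_1 \isom U_2 \isom \Z_l$, transporting it along this isomorphism yields the corresponding exact sequence for $F'$.

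It remains to prove that $\mathcal{H}^2(F',\F_l)$ is non-trivial. Write $q_K\colon G_K^{m+1}\surj G_K^m$ and $q_L\colon G_L^{m+1}\surj G_L^m$ for the canonical projections and set $\widetilde F_0 = q_K^{-1}(F)$, $\widetilde{F'}_0 = q_L^{-1}(F')$, the inverse images occurring in Definition~\ref{curlyh}. By Lemma~\ref{DiagramLemma}, $\sigma_{m+1}$ and $\sigma_m$ are compatible with $q_K$ and $q_L$, so for $x\in\widetilde F_0$ we have $q_L(\sigma_{m+1}(x)) = \sigma_m(q_K(x))\in F'$; hence $\sigma_{m+1}(\widetilde F_0)\subseteq \widetilde{F'}_0$ and there is a commutative square of profinite groups
\begin{center}
\begin{tikzcd}
\widetilde F_0 \arrow[r,"\sigma_{m+1}"] \arrow[d,two heads,"q_K"'] & \widetilde{F'}_0 \arrow[d,two heads,"q_L"] \\
F \arrow[r,"\sigma_m"] & F'
\end{tikzcd}
\end{center}
with bottom arrow an isomorphism and vertical arrows the canonical surjections.

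Applying continuous $\F_l$-cohomology, contravariantly, produces a commutative square
\begin{center}
\begin{tikzcd}
H^2(F',\F_l) \arrow[r,"\sigma_m^*"] \arrow[d,"\mathrm{inf}"'] & H^2(F,\F_l) \arrow[d,"\mathrm{inf}"] \\
H^2(\widetilde{F'}_0,\F_l) \arrow[r,"\sigma_{m+1}^*"] & H^2(\widetilde F_0,\F_l)
\end{tikzcd}
\end{center}
in which the vertical arrows are the inflation maps of Definition~\ref{curlyh} and $\sigma_m^*$ is an isomorphism (since $\sigma_m|_F$ is). As $\mathcal{H}^2(F,\F_l)\neq 0$, choose $\alpha\in H^2(F,\F_l)$ with $\mathrm{inf}(\alpha)\neq 0$ in $H^2(\widetilde F_0,\F_l)$ and put $\alpha' = (\sigma_m^*)^{-1}(\alpha)\in H^2(F',\F_l)$. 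If $\mathrm{inf}(\alpha')$ were zero, commutativity would give $\mathrm{inf}(\alpha) = \sigma_{m+1}^*(\mathrm{inf}(\alpha')) = 0$, a contradiction; hence $\mathrm{inf}(\alpha')\neq 0$, so $\mathcal{H}^2(F',\F_l)\neq 0$ and $F'\in\tD_{m,l,L}$.

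I expect the only point requiring genuine care to be the containment $\sigma_{m+1}(\widetilde F_0)\subseteq \widetilde{F'}_0$ that makes the first square commute — this is precisely why the proposition is phrased at level $m+1$ rather than $m$. Everything after that is formal functoriality of group cohomology combined with the fact that $\sigma_m$ restricts to an isomorphism on $F$; in particular no new arithmetic input is needed, the arithmetic having already been absorbed into the non-vanishing of $\mathcal{H}^2(F,\F_l)$ (via Propositions~\ref{localglobal} and~\ref{starlisopen}).
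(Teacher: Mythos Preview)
Your proof is correct and follows essentially the same approach as the paper: transport the exact sequence via the isomorphism $\sigma_m|_F$, then use the commutative square of inflation maps (built from the compatibility $q_L\circ\sigma_{m+1}=\sigma_m\circ q_K$) together with the bottom isomorphism $H^2(F',\F_l)\isomto H^2(F,\F_l)$ to push a non-trivial inflated class from the $K$-side to the $L$-side. If anything, your containment $\sigma_{m+1}(\widetilde F_0)\subseteq\widetilde{F'}_0$ is stated more carefully than the paper's corresponding line, and is exactly what is needed for the cohomology square to commute.
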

\begin{proof}
As $\sigma_m$ restricts to an injective map on $F$, it induces naturally an isomorphism $F \isomto F'$. We then immediately get that $F'$ fits in an exact sequence \[ 1 \to \Z_l \to F' \to \Z_l \to 1. \]
To show that $F'$ satisfies condition $(\star_l)$ it only remains to be shown that the inflation map in Definition \ref{starldef} has non-trivial image. \\ Denote by $\widetilde{F}$ (resp. $\widetilde{F}'$) the inverse image of $F$ in $G_K^{m+1}$ with respect to the canonical projection (resp. of $F'$ in $G_L^{m+1}$). By commutativity of the diagram in Proposition \ref{Diagram}, it follows that $\widetilde{F}' = \sigma_{m+1}(\widetilde{F})$. Then, we may get the following commutative diagram
\begin{center}
\begin{tikzcd}
 H^2(\widetilde{F}',\F_l) \arrow[r] & H^2(\widetilde{F},\F_l)\\
 H^2(F',\F_l) \arrow[r,"\sim"] \arrow[u,"\inf_{F'}"] & H^2(F,\F_l)  \arrow[u,"\inf_F"] 
\end{tikzcd}
\end{center}
where the vertical arrows are the inflation maps and the horizontal arrows are induced by $\sigma_{m+1}$ and $\sigma_m$. Since $\sigma_m$ is an isomorphism between $F$ and $F'$, the induced arrow $H^2(F',\F_l) \to H^2(F,\F_l)$ is also an isomorphism. \\
By Definition \ref{starldef} the map $\inf_F: H^2(F,\F_l) \to H^2(\widetilde{F},\F_l)$ has non-trivial image, and so the composite map $H^2(F',\F_l) \to H^2(\widetilde{F},\F_l)$ will also have non-trivial image. By commutativity of the diagram the map $\inf_{F'}: H^2(F',\F_l) \to H^2(\widetilde{F}',\F_l)$ also has non-trivial image. We may now conclude the subgroup $F'$ of $G_L^m$ also satisfies property $(\star_l)$. 
\end{proof}

\begin{proposition}\label{starllift}
Let $m \ge 2$ be an integer. Consider a homomorphism of profinite groups $\sigma_{m+2}: G_K^{m+2} \to G_L^{m+2}$, and the homomorphisms $\sigma_{m+1}$ and $\sigma_m$ it induces.
Let $F \subset G_K^{m+1}$ satisfy property $(\star_l)$, and let $\overline{F}$ be the image of $F$ in $G_K^m$ (which also satisfies property $(\star_l)$, see Proposition \ref{starldown}).\\
Then the restriction of $\sigma_m$ to $\overline{F}$ is injective if and only if the restriction of $\sigma_{m+1}$ to $F$ is.
\end{proposition}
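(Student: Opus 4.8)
The plan is to play the commutative square of Proposition~\ref{Diagram} at the level $m+1 \to m$ against the fact, already contained in the proof of Proposition~\ref{starldown}, that the canonical projection restricts to an isomorphism on any $(\star_l)$-subgroup. Write $\pi_K \colon G_K^{m+1} \surj G_K^m$ and $\pi_L \colon G_L^{m+1} \surj G_L^m$ for the canonical projections, so that $\sigma_m \circ \pi_K = \pi_L \circ \sigma_{m+1}$, and set $F' := \sigma_{m+1}(F) \subseteq G_L^{m+1}$. Since $F$ is a $(\star_l)$-subgroup, the argument in the proof of Proposition~\ref{starldown} shows that $\pi_K$ restricts to an isomorphism $\pi_K|_F \colon F \isomto \overline{F}$. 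Restricting the commutative square to $F$ then yields the single identity
\[
\sigma_m|_{\overline{F}} \circ \pi_K|_F \;=\; \pi_L|_{F'} \circ \sigma_{m+1}|_F
\]
of homomorphisms $F \to G_L^m$, and the whole proposition will be read off from this identity.

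For the implication ``$\sigma_m|_{\overline{F}}$ injective $\Rightarrow$ $\sigma_{m+1}|_F$ injective'', simply observe that the left-hand side of the displayed identity is then a composite of an isomorphism followed by an injection, hence injective; therefore $\pi_L|_{F'} \circ \sigma_{m+1}|_F$ is injective, which forces $\sigma_{m+1}|_F$ to be injective. This direction uses nothing beyond Propositions~\ref{Diagram} and~\ref{starldown}.

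For the converse, suppose $\sigma_{m+1}|_F$ is injective. The one substantial point is to show that $\pi_L$ remains injective on $F'$: in contrast to $F$, there is a priori no reason for $F' \cap \ker(\pi_L)$ to be trivial, since $\ker(\pi_L)$ meets $F'$ in a (possibly non-trivial) abelian subgroup. This is exactly where the extra level $m+2$ is needed. Because $\sigma_{m+1}$ is induced from $\sigma_{m+2}$ and restricts to an injection on the $(\star_l)$-subgroup $F \subset G_K^{m+1}$, Proposition~\ref{MapStarl} (applied with $m$ replaced by $m+1$) tells us that $F' = \sigma_{m+1}(F)$ is again a $(\star_l)$-subgroup, now of $G_L^{m+1}$. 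Hence the argument in the proof of Proposition~\ref{starldown} applies verbatim to $F'$ and gives that $\pi_L$ restricts to an isomorphism $\pi_L|_{F'} \colon F' \isomto \overline{F'}$. Substituting back into the displayed identity, $\sigma_m|_{\overline{F}} = \pi_L|_{F'} \circ \sigma_{m+1}|_F \circ (\pi_K|_F)^{-1}$ is a composite of an isomorphism, an injection, and an isomorphism, hence injective. I expect this last step — lifting injectivity back up from level $m$ to level $m+1$, which forces the appeal to Proposition~\ref{MapStarl} and thereby to a homomorphism defined two levels higher — to be the only real obstacle; the downward direction is purely formal.
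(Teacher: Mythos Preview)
Your proof is correct and follows essentially the same approach as the paper's: both directions hinge on the commutative square from Proposition~\ref{Diagram} together with the fact (from the proof of Proposition~\ref{starldown}) that the projection is an isomorphism on any $(\star_l)$-subgroup, and the harder direction is handled via Proposition~\ref{MapStarl} to certify that $F' = \sigma_{m+1}(F)$ is again a $(\star_l)$-subgroup. The only cosmetic difference is in the final step of that harder direction: the paper observes that $\sigma_m(\overline{F})$ is $l$-decomposition-like (as the image of the $(\star_l)$-subgroup $F'$ under $\pi_L$, by Proposition~\ref{starldown}) and then invokes Lemma~\ref{zlinj} directly on $\sigma_m|_{\overline{F}}$, whereas you instead extract from the \emph{proof} of Proposition~\ref{starldown} that $\pi_L|_{F'}$ is itself an isomorphism and compose. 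Both routes are equivalent and equally short.
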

\begin{proof}
First we prove the ``if''. As $\sigma_{m+1}(F)$ satisfies condition $(\star_l)$ by Proposition \ref{MapStarl}, its image in $G_L^m$ must also satisfy condition $(\star_l)$ by Proposition \ref{starldown}. Then, by the commutativity of the diagram in Proposition \ref{Diagram} it follows the image of $\overline{F}$ by $\sigma_m$ is an $l$-decomposition like group, and so we conclude by Lemma \ref{zlinj}. \\
Now, we prove the ``only if'' part. This follows again from the commutativity of the diagram in Proposition \ref{Diagram}, as the composite of $F \isomto \overline{F} \isomto \sigma_m(\overline{F})$ must factor through $\sigma_{m+1}$. However, since the composite map $F \to \sigma_m(\overline{F})$ is an isomorphism, it follows that $\sigma_{m+1}$ restricts to an injection on $F$ as desired.
\end{proof}

\begin{proposition}\label{allclass}
Let $m \ge 2$ be an integer, and let $\sigma_m: G_K^m \to G_L^m$ be a homomorphism of profinite groups.
Let $F,F' \in \tD_{m,l,K}$ such that $\phi_{m,l,K}(F)$ and $\phi_{m,l,K}(F')$ are above the same prime $\p$ of $K$. Then, $\sigma_m$ restricts to an injection on $F$ if and only if it restricts to an injection on $F'$.
\end{proposition}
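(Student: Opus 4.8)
The plan is to reduce, by a succession of conjugations carried out \emph{inside} $G_K^m$, to the case where $F$ and $F'$ are open subgroups of one and the same $l$-Sylow subgroup of a single decomposition group in $G_K^m$, and then to conclude by an elementary torsion-freeness argument. What makes the conjugations harmless is that $\ker(\sigma_m)$ is normal in $G_K^m$: for every $g\in G_K^m$ one has $gFg^{-1}\cap\ker(\sigma_m)=g\big(F\cap\ker(\sigma_m)\big)g^{-1}$, so $\sigma_m$ restricts to an injection on $F$ if and only if it does so on $gFg^{-1}$. Moreover condition $(\star_l)$ is visibly conjugation-invariant, so $gFg^{-1}\in\tD_{m,l,K}$ whenever $F$ is, and if $F$ is open in an $l$-Sylow subgroup of $D_{\p_1}$ for some $\p_1\in\Primes_{K_m}$ then $gFg^{-1}$ is open in an $l$-Sylow subgroup of $gD_{\p_1}g^{-1}=D_{g\p_1}$; in particular, by Proposition \ref{STMapping}, $\phi_{m,l,K}(gFg^{-1})=\bar g\cdot\phi_{m,l,K}(F)$, where $\bar g$ is the image of $g$ in $G_K^{m-1}$.

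First I would use the hypothesis. Let $\bar{\p}_1=\phi_{m,l,K}(F)$ and $\bar{\p}_1'=\phi_{m,l,K}(F')$ be the associated primes of $K_{m-1}$; by assumption they both lie above the prime $\p$ of $K$. Since $K_{m-1}/K$ is Galois with group $G_K^{m-1}$, some $\bar g\in G_K^{m-1}$ satisfies $\bar g\cdot\bar{\p}_1=\bar{\p}_1'$; lifting $\bar g$ to $g\in G_K^m$ and replacing $F$ by $gFg^{-1}$ (legitimate by the first paragraph), we may assume $\phi_{m,l,K}(F)=\phi_{m,l,K}(F')=\bar{\p}$ for a single prime $\bar{\p}$ of $K_{m-1}$.

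Next, by Proposition \ref{starlisopen} there are $\p_1,\p_1'\in\Primes_{K_m}$, both above $\bar{\p}$, with $F$ open in an $l$-Sylow subgroup $S_1$ of $D_{\p_1}$ and $F'$ open in an $l$-Sylow subgroup $S_2$ of $D_{\p_1'}$. Since $\p_1$ and $\p_1'$ lie above the same prime $\bar{\p}$ of $K_{m-1}$ and $K_m/K_{m-1}$ is Galois, they are conjugate under $G(K_m/K_{m-1})=G_K^m[m-1]$; and any two $l$-Sylow subgroups of a profinite group are conjugate. Composing these two conjugations produces $g_0\in G_K^m$ with $g_0S_1g_0^{-1}=S_2$, and replacing $F$ by $g_0Fg_0^{-1}$ we may assume $F$ and $F'$ are both open subgroups of the $l$-Sylow subgroup $S:=S_2$ of $D_{\p_1'}$. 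Then $F\cap F'$ is open — hence of finite index — in both $F$ and $F'$. To finish, put $N=F\cap\ker(\sigma_m)$: if $\sigma_m$ is injective on $F\cap F'$ then $N\cap(F\cap F')=\{1\}$, so $N$ injects into the finite group $F/(F\cap F')$ and is finite; but $F$, being an extension of $\Z_l$ by $\Z_l$, is torsion-free, whence $N=\{1\}$. The converse is trivial, so $\sigma_m$ is injective on $F$ if and only if it is on $F\cap F'$; by symmetry the same holds for $F'$, and chaining the two equivalences yields the claim.

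The only delicate part is the bookkeeping in the reduction step: one must check that every conjugating element lies in $G_K^m$ (so that normality of $\ker(\sigma_m)$ applies and the conjugate is again a $(\star_l)$-subgroup lying above the correct prime), and invoke the right transitivity at each level — $G_K^{m-1}$ acts transitively on the primes of $K_{m-1}$ above a given prime of $K$, $G_K^m[m-1]$ acts transitively on the primes of $K_m$ above a given prime of $K_{m-1}$, and the $l$-Sylow subgroups of a profinite group form a single conjugacy class. Once $F$ and $F'$ have been placed inside a common $l$-Sylow subgroup, the remaining torsion-freeness argument is routine.
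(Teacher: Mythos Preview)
Your proof is correct. The approach differs from the paper's in how you pass from injectivity on $F$ to injectivity on $F'$. The paper conjugates only once, sending an $l$-Sylow $D_{\p,l}\supseteq F$ to an $l$-Sylow $D'_{\p,l}\supseteq F'$, and then invokes the classification of quotients of an $l$-decomposition-like group: since $\sigma_m(D_{\p,l})$ contains the $l$-decomposition-like subgroup $\sigma_m(F)$, cases (2) and (3) of that classification are ruled out, so $\sigma_m$ is injective on the whole Sylow $D_{\p,l}$, hence on its conjugate $D'_{\p,l}$, hence on $F'$. You instead conjugate further so that $F$ and $F'$ sit inside a \emph{single} $l$-Sylow, and then use the bare torsion-freeness of an extension of $\Z_l$ by $\Z_l$ to show that injectivity on the open subgroup $F\cap F'$ forces injectivity on $F$ (and symmetrically on $F'$). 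Your route avoids appealing to the full case analysis preceding Lemma~\ref{zlinj} and is in that sense more self-contained; the paper's route is shorter because it outsources the work to that classification, and has the mild side benefit of exhibiting injectivity on the entire $l$-Sylow rather than just on $F$ and $F'$.
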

\begin{proof}
For some decomposition groups $D_\p, D'_{\p} \subset G_K^m$ above $\p$ and some appropriate $l$-sylow subgroups, we have $F \subseteq D_{\p,l}$ and $F' \subseteq D'_{\p,l}$. \\
As $D_{\p}$ and $D'_{\p}$ are conjugate in $G_K^m$, their $l$-Sylow subgroups also are, and in particular there exists $g \in G_K^m$ such that $g D_{\p,l} g^{-1} = D'_{\p,l}$. It then also follows that the images of $D_{\p,l}$ and $D'_{\p,l}$ by $\sigma_m$ are conjugate by $\sigma_m(g)$. \\
Assume without loss of generality that $F$ is mapped injectively. Then, $\sigma_m(D_{\p,l})$ must contain an $l$-decomposition like group, and by the classification $\sigma_m$ restricts to an injection on $D_{\p,l}$. It follows that $\sigma_m$ also restricts to an injection on $D'_{\p,l}$ and, in particular, on $F'$.
\end{proof}

\begin{remark}\label{LocalInj}
With the above proposition, for a fixed prime number $l$, we may say that whether $\sigma_m$ restricts to an injection on a subgroup $F \subseteq G_K^m$ satisfying property $(\star_l)$ or not only depends on the unique $\p$ of $K$ determined by $F$. \\ Together with Proposition \ref{starllift}, we can see that, as long as we have the conditions to recover $(\star_l)$-subgroups this is also independent of $m$, and really only depends on $\p$. \end{remark}

\begin{proposition}\label{MapApprox}
Let $m \ge 2$ be an integer, $l$ a prime number, and consider a homomorphism of profinite groups $\sigma_{m+1}: G_K^{m+1} \to G_L^{m+1}$. Let $F_1$ and $F_2$ be subgroups of $G_K^m$ satisfying condition $(\star_l)$ such that $\sigma_m: G_K^m \to G_L^m$ restricts to an injection on both $F_1$ and $F_2$, and $F_1 \approx F_2$ in $\tD_{m,l,K}$. \\
Then, $\sigma_m(F_1) \approx \sigma_m(F_2)$ in $\tD_{m,l,L}$. 
\end{proposition}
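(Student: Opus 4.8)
The strategy is to test the relation $\approx$ on $\sigma_m(F_1),\sigma_m(F_2)$ by pulling back, along $\sigma_m$, the open subgroups that appear in Definition~\ref{equivdef} for $G_L^m$, and then to push forward the commensurability furnished by $F_1\approx F_2$. Before anything else I would note that $\sigma_m(F_1),\sigma_m(F_2)\in\tD_{m,l,L}$: this is precisely Proposition~\ref{MapStarl}, which is where the lift $\sigma_{m+1}$ enters and why it appears in the hypotheses; in particular the conclusion ``$\sigma_m(F_1)\approx\sigma_m(F_2)$ in $\tD_{m,l,L}$'' is meaningful.

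Now fix an open subgroup $H'\subseteq G_L^m$ with $G_L^m[m-1]\subseteq H'$, and set $H:=\sigma_m^{-1}(H')$. Since $\sigma_m$ is continuous, $H$ is open in $G_K^m$, and since $\sigma_m(G_K^m[m-1])\subseteq G_L^m[m-1]\subseteq H'$ we get $G_K^m[m-1]\subseteq H$; thus $H$ is an admissible ``test subgroup'' for $\approx$ on $\tD_{m,l,K}$. I would then record the elementary identity
\[
\sigma_m(F_i)\cap H' \;=\; \sigma_m(F_i\cap H)\qquad(i=1,2),
\]
whose only non-formal inclusion, $\sigma_m(F_i)\cap H'\subseteq\sigma_m(F_i\cap H)$, uses exactly that $H$ is the full preimage of $H'$ (if $z=\sigma_m(x)$ with $x\in F_i$ lies in $H'$, then $x\in\sigma_m^{-1}(H')=H$).

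The heart of the argument is to transport images through abelianisations. The restriction $\sigma_m|_H:H\to H'$ carries $H[1]$ into $H'[1]$, hence induces $\rho:H^{\ab}\to {H'}^{\ab}$ sitting in a commutative square with the projections $\pi:H\surj H^{\ab}$ and $\pi':H'\surj {H'}^{\ab}$. Consequently the image of $\sigma_m(F_i)\cap H'=\sigma_m(F_i\cap H)$ in ${H'}^{\ab}$ equals $\rho\big(\pi(F_i\cap H)\big)$. By the hypothesis $F_1\approx F_2$, the subgroups $\pi(F_1\cap H)$ and $\pi(F_2\cap H)$ are commensurable in $H^{\ab}$, so it remains to invoke the general fact that a continuous homomorphism of profinite groups sends commensurable closed subgroups to commensurable closed subgroups: if $A\cap B$ is open in $A$, then $\rho|_A:A\to\rho(A)$ is a continuous surjection of profinite groups, hence an open map (via $A/\ker(\rho|_A)\to\rho(A)$ being a continuous bijection of compact Hausdorff groups), so $\rho(A\cap B)\subseteq\rho(A)\cap\rho(B)$ is open in $\rho(A)$, and symmetrically in $\rho(B)$. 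Applying this with $A=\pi(F_1\cap H)$ and $B=\pi(F_2\cap H)$ shows that the images of $\sigma_m(F_1)\cap H'$ and $\sigma_m(F_2)\cap H'$ in ${H'}^{\ab}$ are commensurable; as $H'$ was arbitrary, $\sigma_m(F_1)\approx\sigma_m(F_2)$.

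I do not expect a serious obstacle here: the content is the two reductions (pull back $H'$ to $H$; push the commensurability through $\rho$), and the only points needing care are the bookkeeping of which subgroup of which quotient one works in at each step, the preimage identity displayed above, and the open-mapping fact used at the end. Note that the hypothesis that $\sigma_m$ is injective on $F_1$ and $F_2$ is used only through Proposition~\ref{MapStarl} to guarantee membership in $\tD_{m,l,L}$; the commensurability computation itself does not need it.
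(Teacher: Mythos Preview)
Your proof is correct and follows the same approach as the paper: pull back $H'$ to $H=\sigma_m^{-1}(H')$, verify $G_K^m[m-1]\subseteq H$, and transport the commensurability from $H^{\ab}$ to ${H'}^{\ab}$ via the induced map $\rho$. The only variation is in the final step: the paper uses the injectivity of $\sigma_m$ on each $F_i$ to get that $\rho$ restricts to isomorphisms $\overline{F}_i\isomto\overline{F}'_i$ and reads off commensurability from those, whereas you invoke the general open-mapping fact that continuous homomorphisms of profinite groups preserve commensurability (so, as you observe, injectivity is only needed to ensure membership in $\tD_{m,l,L}$ via Proposition~\ref{MapStarl}).
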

\begin{proof}
Let $H'$ be a subgroup of $G_L^{m}$ containing $G_L^m[m-1]$, and let $H=\sigma_m^{-1}(H')$. As $\sigma_m(G_K^m[m-1]) \subseteq G_L^m[m-1]$, it follows immediately that $H$ contains $G_K^m[m-1]$. \\
For $i=1,2$, let $F'_i=\sigma_m(F_i)$, let $\overline{F}_i$ be the image of $F_i \cap H$ in the quotient $H \to H^{\ab}$, and similarly let $\overline{F}'_i$ be the image of $F'_i \cap H'$ in $H' \to H'^{\ab}$. It follows from the definition of $\approx$ that $\overline{F}_1$ and $\overline{F}_2$ are commensurable. The goal is to show $\overline{F}'_1$ and $\overline{F}'_2$ are also commensurable. \\
As $\sigma_m$ restricts to an injection on $F_i$ and $\sigma_m(H) \subseteq H'$, it follows that $\sigma_m$ restricts to an isomorphism between $F_i \cap H$ and $F'_i \cap H'$. Then, this also implies we have an isomorphism $\overline{F}_i \to \overline{F}'_i$. In particular, these isomorphisms give the intersection $\overline{F}'_1 \cap \overline{F}'_2$ is an open subgroup of both $\overline{F}'_1$ and $\overline{F}'_2$, which shows they are indeed commensurable, as desired.
\end{proof} 

\section{Construction of a mapping of Primes}

In this section we maintain the same notation as in the previous section, so $K$ and $L$ are number fields. \\
With the results of Section 2, it is possible to use Proposition \ref{STMapping} to establish a map of primes for homomorphisms of $m$-step solvably closed Galois groups 

\begin{definition}\label{DaggerlDef}
For an integer $m \ge 2$ and a prime number $l$, we will say that a homomorphism of profinite groups $\sigma_m: G_K^m \to G_L^m$ satisfies condition $(\dagger_l)$ if the image by $\sigma_m$ of every subgroup of $G_K^m$ satisfying property $(\star_l)$ is a subgroup of $G_L^m$ satisfying property $(\star_l)$. \\
We will also say that $\sigma_m$ satisfies condition $(\dagger)$ if it satisfies condition $(\dagger_l)$ for all prime numbers $l$
\end{definition}

\begin{remark}\label{localinj2}
As in Remark \ref{LocalInj}, we know that we can detect whether a subgroup $F$ of $G_K^m$ satisfying condition $(\star_l)$ is mapped injectively by $\sigma_m$ or not by looking at any other $(\star_l)$-subgroup of $G_K^m$ which determines the same prime of $K$ as $F$ does. Furthermore, this can be also determined by looking at any other (suitable) number $m$ of steps. 
\end{remark}

\begin{remark}\label{groupdaggerl}
Let us assume that $\sigma_m$ is induced by a homomorphism of profinite groups $\sigma_{m+1}: G_K^{m+1} \to G_L^{m+1}$, so that we can recover group theoretically the subgroups of $G_K^m$ satisfying condition $(\star_l)$. Then, we may reformulate Definition \ref{DaggerlDef} using the following group theoretic characterization: $\sigma_m$ satisfies condition $(\dagger_l)$ if $\sigma_m$ restricts to an injection on all the subgroups of $G_K^m$ satisfying property $(\star_l)$ (cf. Proposition \ref{MapStarl})
\end{remark}

\begin{proposition}\label{daggerldown}
For an integer $m \ge 2$ and a prime number $l$, assume the homomorphism $\sigma_{m+1}: G_K^{m+1} \to G_L^{m+1}$ satisfies property $(\dagger_l)$. Then, the induced homomorphism $\sigma_m: G_K^m \to G_L^m$ also satisfies property $(\dagger_l)$.
\end{proposition}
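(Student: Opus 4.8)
The plan is to verify the defining condition of $(\dagger_l)$ for $\sigma_m$ directly: fix an arbitrary closed subgroup $\overline F \subseteq G_K^m$ satisfying $(\star_l)$, and show that $\sigma_m(\overline F)$ satisfies $(\star_l)$ in $G_L^m$. Since $\sigma_m$ is the homomorphism induced from $\sigma_{m+1}$, Proposition \ref{Diagram} (applied one level up) gives a commutative square whose horizontal maps are $\sigma_{m+1}$ and $\sigma_m$ and whose vertical maps are the canonical projections $\pi_K: G_K^{m+1} \surj G_K^m$ and $\pi_L: G_L^{m+1} \surj G_L^m$; in particular $\pi_L \circ \sigma_{m+1} = \sigma_m \circ \pi_K$. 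The whole argument then reduces to a single claim: every $(\star_l)$-subgroup $\overline F$ of $G_K^m$ is of the form $\pi_K(F)$ for some $(\star_l)$-subgroup $F$ of $G_K^{m+1}$. Granting this, the conclusion is immediate: $\sigma_{m+1}(F)$ is a $(\star_l)$-subgroup of $G_L^{m+1}$ because $\sigma_{m+1}$ satisfies $(\dagger_l)$ by hypothesis; its image $\pi_L(\sigma_{m+1}(F))$ in $G_L^m$ is a $(\star_l)$-subgroup by Proposition \ref{starldown}; and $\pi_L(\sigma_{m+1}(F)) = \sigma_m(\pi_K(F)) = \sigma_m(\overline F)$.

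It remains to establish the lifting claim, which is the real content, and here I would proceed as follows. By Proposition \ref{starlisopen}, $\overline F$ is an open subgroup of an $l$-Sylow subgroup $S$ of the decomposition group $D_{\q} \subseteq G_K^m$ of some prime $\q \in \Primes_{K_m}$ with $\ch(\q) \neq l$. Choose a prime $\q'$ of $K_{m+1}$ lying above $\q$; then $\ch(\q') = \ch(\q) \neq l$, and $\pi_K$ carries the decomposition group $D_{\q'} \subseteq G_K^{m+1}$ onto $D_{\q}$ (the image of a decomposition group under $G_K^{m+1} \surj G_K^m$ is the decomposition group of the restricted prime, as already used in the proof of Proposition \ref{starldown}). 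Let $S'$ be an $l$-Sylow subgroup of $D_{\q'}$; then $\pi_K(S')$ is an $l$-Sylow subgroup of $D_{\q}$, and since all such are conjugate in $D_{\q} = \pi_K(D_{\q'})$, we may --- after replacing $S'$ by a suitable $D_{\q'}$-conjugate --- assume $\pi_K(S') = S$. (In fact, by the argument in the proof of Proposition \ref{starldown}, $\pi_K$ restricts to an isomorphism $S' \isomto S$, since $S'$ is itself a $(\star_l)$-subgroup of $G_K^{m+1}$.) Now set $F := \pi_K^{-1}(\overline F) \cap S'$. Then $\pi_K(F) = \overline F$, and $F$ is open in $S'$ because $\overline F$ is open in $S$; as $S'$ is an $l$-Sylow subgroup of $D_{\q'}$ and $\ch(\q') \neq l$, Proposition \ref{starlisopen} shows that $F$ is a $(\star_l)$-subgroup of $G_K^{m+1}$ with $\pi_K(F) = \overline F$, which is what we wanted.

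Putting the two pieces together gives $\sigma_m(\overline F) \in \tD_{m,l,L}$ for every $\overline F \in \tD_{m,l,K}$, i.e. $\sigma_m$ satisfies $(\dagger_l)$. I expect the only delicate point to be the profinite bookkeeping in the lifting step: that an $l$-Sylow subgroup of $D_{\q'}$ projects onto an $l$-Sylow subgroup of $D_{\q}$ under $G_K^{m+1} \surj G_K^m$ --- standard Sylow theory for profinite groups, also implicit in the structural results of \cite{S-T}, Proposition 1.1 --- and that it can be arranged to cover the particular Sylow $S$ containing $\overline F$. Everything else is a formal consequence of Propositions \ref{Diagram}, \ref{starlisopen} and \ref{starldown}; informally, the statement is the ``independence of the number of steps'' of the relevant data already noted in Remark \ref{LocalInj}.
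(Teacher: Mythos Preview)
Your proof is correct and follows essentially the same approach as the paper: both lift the given $(\star_l)$-subgroup of $G_K^m$ to a $(\star_l)$-subgroup of $G_K^{m+1}$ via an $l$-Sylow of a decomposition group (using Proposition~\ref{starlisopen} and the isomorphism of $l$-Sylows under $\pi_K$), apply the $(\dagger_l)$ hypothesis at level $m+1$, and push the result back down. The only cosmetic difference is in the last step --- the paper concludes via Proposition~\ref{starllift} and Proposition~\ref{MapStarl}, whereas you use Proposition~\ref{starldown} together with the commutativity $\pi_L\circ\sigma_{m+1}=\sigma_m\circ\pi_K$ directly, which is an equally valid (and arguably slightly more streamlined) way to finish.
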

\begin{proof}
Let $F$ be a subgroup of $G_K^m$ satisfying property $(\star_l)$. Then, for some prime $\p \in \Primes_{K_m}$ with decomposition group $D_{\p} \subseteq G_K^m$ we have $F$ is an open subgroup of $D_{\p,l}$ (cf. Proposition \ref{starlisopen}). For a prime $\hat{\p}$ of $\Primes_{K_{m+1}}$ with decomposition group $D_{\hat{\p}} \subset G_K^{m+1}$ we have a natural surjective map $D_{\hat{\p},l} \surj D_{\p,l}$, which is an isomorphism by Lemma \ref{zlinj}. Then, an open subgroup $H$ of $D_{\hat{\p},l}$ maps isomorphically to $F$. It follows necessarily that $H$ satisfies property $(\star_l)$, and is mapped injectively by $\sigma_{m+1}$ by assumption. By Proposition \ref{starllift}, it then follows $F$ is mapped injectively by $\sigma_{m}$, that is to a $(\star_l)$-subgroup of $G_L^m$ by Proposition \ref{MapStarl}. It then follows immediately that $\sigma_m$ does indeed verify condition $(\dagger_l)$.
\end{proof}

\begin{proposition}\label{DaggerlMap}
Let $m \ge 2$ be an integer, $l$ a prime number and $\sigma_{m}: G_K^{m} \to G_L^{m}$ a homomorphism of profinite groups that satisfies condition $(\dagger_l)$. Then, there is a natural map $\widetilde{\sigma}^*_{m,l}: \tD_{m,l,K} \to \tD_{m,l,L}$ defined by setting $\widetilde{\sigma}^*_{m,l}(F) = \sigma_{m,l}(F)$.
Furthermore, $\widetilde{\sigma}^*_{m,l}$ induces a map $\sigma^*_{m,l}: \D_{m,l,K} \to \D_{m,l,L}$.
\end{proposition}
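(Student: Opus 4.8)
The plan is to dispose of the first assertion directly from the hypothesis and then reduce the second to Proposition \ref{MapApprox}. For the first: given $F \in \tD_{m,l,K}$, condition $(\dagger_l)$ (Definition \ref{DaggerlDef}) says exactly that $\sigma_m(F)$ satisfies property $(\star_l)$, i.e. $\sigma_m(F) \in \tD_{m,l,L}$. Hence $F \mapsto \sigma_m(F)$ is a well-defined map $\widetilde{\sigma}^*_{m,l}\colon \tD_{m,l,K} \to \tD_{m,l,L}$, and there is nothing further to check for this part.

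For the descent to $\D_{m,l,K} = \tD_{m,l,K}/\approx$ and $\D_{m,l,L} = \tD_{m,l,L}/\approx$, I must show that $F_1 \approx F_2$ in $\tD_{m,l,K}$ implies $\sigma_m(F_1) \approx \sigma_m(F_2)$ in $\tD_{m,l,L}$. The first step is to note that $(\dagger_l)$ forces $\sigma_m$ to restrict to an \emph{injection} on every $(\star_l)$-subgroup $F$: by Proposition \ref{starlisopen}, $F$ is open in an $l$-Sylow of a decomposition group, hence is $l$-decomposition-like, so the classification of images of $l$-decomposition-like groups applies to $\sigma_m|_F$; since $\sigma_m(F)$ is again a $(\star_l)$-subgroup it fits in an exact sequence $1 \to \Z_l \to \sigma_m(F) \to \Z_l \to 1$ (Definition \ref{starldef}), which forces case \ref{declikeinj} of that classification, i.e. $\sigma_m$ is injective on $F$ (this is also the reformulation recorded in Remark \ref{groupdaggerl}). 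The second step is to invoke Proposition \ref{MapApprox} for $F_1$ and $F_2$: its proof uses only that $\sigma_m$ restricts to an isomorphism $F_i \cap H \isomto \sigma_m(F_i) \cap H'$ for the relevant open subgroups $H = \sigma_m^{-1}(H')$ containing $G_K^m[m-1]$, which we have just guaranteed, and it concludes $\sigma_m(F_1) \approx \sigma_m(F_2)$ in $\tD_{m,l,L}$. Therefore $\widetilde{\sigma}^*_{m,l}$ is constant on $\approx$-classes and descends to a map $\sigma^*_{m,l}\colon \D_{m,l,K} \to \D_{m,l,L}$.

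I do not anticipate a genuine obstacle; the only delicate point is that Proposition \ref{MapApprox} is \emph{stated} in the presence of a lift $\sigma_{m+1}$, whereas here no such lift is assumed. I would handle this by observing that the lift plays no role in the proof of Proposition \ref{MapApprox} — it serves only to make $(\star_l)$-subgroups group-theoretically visible — so the argument goes through verbatim once one has the injectivity of $\sigma_m$ on $(\star_l)$-subgroups established above (alternatively, one can simply re-run that short commensurability argument here). Everything else (that $\sigma_m(G_K^m[m-1]) \subseteq G_L^m[m-1]$, that the preimage of an open subgroup containing $G_L^m[m-1]$ contains $G_K^m[m-1]$, and that an isomorphism carries commensurable subgroups to commensurable subgroups) is routine.
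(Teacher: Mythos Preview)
Your proposal is correct and follows the same approach as the paper: the first assertion is immediate from Definition \ref{DaggerlDef}, and the second is reduced to Proposition \ref{MapApprox}. You are in fact more careful than the paper, which simply cites \ref{MapApprox} without addressing the discrepancy in hypotheses (namely that \ref{MapApprox} is stated with a lift $\sigma_{m+1}$ while \ref{DaggerlMap} assumes none); your observation that $(\dagger_l)$ already forces injectivity on each $(\star_l)$-subgroup via the classification of images of $l$-decomposition-like groups, and that the proof of \ref{MapApprox} uses only this injectivity, is exactly what is needed to close that gap.
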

\begin{proof}
The existence of $\widetilde{\sigma}^*_{m,l}$ follows immediately from Definition \ref{DaggerlDef}. \\
The existence of $\sigma^*_{m,l}$ then follows from \ref{MapApprox}.
\end{proof}

\begin{remark}\label{lMapPrimes}
With the assumptions of Proposition \ref{DaggerlMap} we have a naturally defined map \[\theta_{m-1}^{(l')}: \lPrimes_{K_{m-1}} \to \lPrimes_{L_{m-1}} \] which we obtain from $\sigma^*_{m,l}$ by considering the bijections $\phi_{m,l,K}$ and $\phi_{m,l,L}$ derived from Proposition \ref{STMapping}. In particular for each prime number $l$ for which $\sigma_m$ satisfies condition $(\dagger_l)$ we have the following commutative diagram
\begin{center}
\begin{tikzcd}
\tD_{m,l,K} \arrow["\widetilde{\sigma}^*_{m,l}"]{r} \arrow[two heads]{d} & \tD_{m,l,L} \arrow[two heads]{d} \\
\D_{m,l,K} \arrow["\sigma^*_{m,l}"]{r} \arrow["\phi_{m,l,K}","\sim" labl]{d} & \D_{m,l,L} \arrow["\phi_{m,l,L}","\sim" labl]{d} \\
\lPrimes_{K_{m-1}} \arrow["\theta_{m-1}^{(l')}"]{r} & \lPrimes_{L_{m-1}}
\end{tikzcd}
\end{center} 
\end{remark}

\begin{proposition}\label{EquivMap}
The map $\theta_{m-1}^{(l')}$ as defined above is compatible with $\sigma_{m-1}$, that is for all $g \in G_K^{m-1}$ we have
$\theta_{m-1}^{(l')}(g \p)=\sigma_{m-1}(g) \theta_{m-1}^{(l')}(\p)$.
\end{proposition}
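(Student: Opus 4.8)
The plan is to trace the $G$-equivariance through the three-layer tower $\tD_{m,l,K} \surj \D_{m,l,K} \isomto \lPrimes_{K_{m-1}}$ displayed in Remark \ref{lMapPrimes} (and similarly for $L$), keeping in mind that we are in the situation of Proposition \ref{DaggerlMap}, so $\sigma_m$ satisfies $(\dagger_l)$ and all the maps in question are defined. The key point is that the top map $\widetilde{\sigma}^*_{m,l}$ is literally $F \mapsto \sigma_m(F)$, hence tautologically equivariant for the conjugation actions: for $g \in G_K^m$ and $F \in \tD_{m,l,K}$,
\[
\widetilde{\sigma}^*_{m,l}(gFg^{-1}) = \sigma_m(gFg^{-1}) = \sigma_m(g)\,\sigma_m(F)\,\sigma_m(g)^{-1}.
\]
Since both the equivalence relation $\approx$ and the bijections $\phi_{m,l,K}$, $\phi_{m,l,L}$ are built to be compatible with these conjugation actions (Proposition \ref{STMapping}(3)), this equivariance descends to $\sigma^*_{m,l} \colon \D_{m,l,K} \to \D_{m,l,L}$ and then transports, through $\phi_{m,l,K}$ and $\phi_{m,l,L}$, to the statement that $\theta_{m-1}^{(l')}$ intertwines the natural $G_K^m$-action on $\lPrimes_{K_{m-1}}$ with the $G_L^m$-action on $\lPrimes_{L_{m-1}}$ via $\sigma_m$.

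Next I would invoke Proposition \ref{STMapping}(3) again, which says that the action on $\D_{m,l,K}$ — equivalently, after $\phi_{m,l,K}$, on $\lPrimes_{K_{m-1}}$ — factors through $G_K^{m-1}$, and likewise on the $L$-side. Combined with the commutative square of Proposition \ref{Diagram} relating $\sigma_m$ and $\sigma_{m-1}$, equivariance via $\sigma_m$ becomes equivariance via $\sigma_{m-1}$: given $g \in G_K^{m-1}$, pick any lift $\tilde g \in G_K^m$; for $\p \in \lPrimes_{K_{m-1}}$ and $a = \phi_{m,l,K}^{-1}(\p)$ we obtain
\[
\theta_{m-1}^{(l')}(g\p) = \phi_{m,l,L}\bigl(\sigma^*_{m,l}(\tilde g \cdot a)\bigr) = \phi_{m,l,L}\bigl(\sigma_m(\tilde g)\cdot \sigma^*_{m,l}(a)\bigr) = \overline{\sigma_m(\tilde g)}\cdot \theta_{m-1}^{(l')}(\p),
\]
where $\overline{\sigma_m(\tilde g)}$ denotes the image of $\sigma_m(\tilde g)$ in $G_L^{m-1}$, which equals $\sigma_{m-1}(g)$ by commutativity. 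This is exactly the claimed identity, and it is independent of the chosen lift $\tilde g$ precisely because the action factors through step $m-1$ on both sides.

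I do not anticipate a genuine obstacle: once one has the equivariance of $\phi_{m,l,K}$ from \cite{S-T} and the compatibility of $\sigma_{m-1}$ with $\sigma_m$, the argument is a short diagram chase. The only points needing a word of care are (i) that passing to $\approx$-classes preserves equivariance — immediate, since $g(F_1\cap H)g^{-1} = gF_1g^{-1} \cap gHg^{-1}$ and conjugation commutes with the passage to $H^{\ab}$, so ``conjugate then descend'' agrees with ``descend then conjugate'' — and (ii) that $\theta_{m-1}^{(l')}$ is well defined on all of $\lPrimes_{K_{m-1}}$ with image in $\lPrimes_{L_{m-1}}$, which is already recorded in Remark \ref{lMapPrimes}. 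I would therefore present the proof as the chase above, inserting one sentence to justify point (i).
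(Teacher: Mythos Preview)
Your proof is correct and follows essentially the same approach as the paper: lift $g$ to $\hat g \in G_K^m$, use that $\widetilde{\sigma}^*_{m,l}$ is conjugation-equivariant by construction, descend via the $G_K^m$-equivariance of $\phi_{m,l,K}$ from Proposition~\ref{STMapping}, and identify $\overline{\sigma_m(\hat g)}$ with $\sigma_{m-1}(g)$ via Proposition~\ref{Diagram}. The paper's version is terser and omits your side remark (i), but the argument is the same diagram chase.
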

\begin{proof}
Let $g \in G_K^{m-1}$ be any element, and let $\hat{g}$ be a lift of $g$ to $G_K^m$. By Proposition \ref{STMapping}, the map $\phi_{m,l,K}$ is $G_K^{m-1}$-equivariant, where the action of $G_K^{m-1}$ on $\D_{m,l,K}$ is induced by that of $G_K^m$. If we consider an equivalence class $a \in \D_{m,l,K}$, for all $F \in a$ we take $\hat{g} F \hat{g}^{-1} \in \tD_{m,l,K}$, which must be an element of the class $g a$ by definition. \\
By the definition of $\tilde{\sigma}^*_{m,l}$ it follows that $\widetilde{\sigma}^*_{m,l}(\hat{g} F \hat{g}^{-1})=\sigma_{m}(\hat{g})\tilde{\sigma}^*_{m,l}(F)\sigma_{m}(\hat{g}^{-1})$. By Proposition \ref{Diagram}, and the commutativity of the above diagram it then follows that $\sigma^*_{m,l}(g a)=\sigma_{m-1}(g)\sigma^*_{m,l}(a)$. The statement then follows immediately.
\end{proof}

\begin{remark}
With the notation of the above proposition, if we take $\p$ and $\p'$ in $\lPrimes_{K_{m-1}}$ which are conjugate by an element of a subgroup $H$ of $G_K^{m-1}$, the above result implies that their images by $\theta_{m-1}^{(l')}$ are conjugate by an element of $\sigma_{m-1}(H)$. \\
In particular, when $i \le m-1$, $H=G_K^{m-1}[i]$, and $\p$ and $\p'$ are above the same prime of $K_i$, then $\sigma(H) \subseteq G_L^{m-1}[i]$ and so the images of $\p$ and $\p'$ by $\theta^{(l')}_{m-1}$ are above the same prime of $L_i$.
\end{remark}

We reformulate this remark into the following Corollary of Proposition \ref{EquivMap}

\begin{corollary}\label{MapDown}
With the same notation as in Proposition \ref{EquivMap}, there exists a map \[ \theta_{m-2}^{(l')}: \lPrimes_{K_{m-2}} \to  \lPrimes_{L_{m-2}} \] so that the diagram
\begin{center}
\begin{tikzcd}
\lPrimes_{K_{m-1}} \arrow["\theta_{m-1}^{(l')}"]{r} \arrow{d} & \lPrimes_{L_{m-1}} \arrow{d} \\
\lPrimes_{K_{m-2}} \arrow["\theta_{m-2}^{(l')}"]{r} & \lPrimes_{L_{m-2}}
\end{tikzcd}
\end{center} 
where the vertical arrows are the natural arrows given by restriction is commutative. 
Furthermore, $\theta_{m-2}^{(l')}$ is compatible with $\sigma_{m-2}$ in the sense that 
$\theta_{m-2}^{(l')}(g \p)=\sigma_{m-2}(g) \theta_{m-2}^{(l')}(\p)$ for all $g \in G_K^{m-2}$
\end{corollary}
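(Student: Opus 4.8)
The plan is to define $\theta_{m-2}^{(l')}$ by descent from $\theta_{m-1}^{(l')}$, using the $\sigma_{m-1}$-compatibility established in Proposition \ref{EquivMap} to check that the construction is independent of the choices made. First I would fix a prime $\bar{\p} \in \lPrimes_{K_{m-2}}$ and choose some prime $\p \in \Primes_{K_{m-1}}$ lying above it; since restriction to a subfield preserves the residue characteristic, $\p \in \lPrimes_{K_{m-1}}$, so $\theta_{m-1}^{(l')}(\p)$ is defined. I would then set $\theta_{m-2}^{(l')}(\bar{\p})$ to be the restriction of $\theta_{m-1}^{(l')}(\p)$ to $L_{m-2}$. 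With this definition the square in the statement commutes tautologically.

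The key step — and the only place where there is genuinely something to check — is that $\theta_{m-2}^{(l')}(\bar{\p})$ does not depend on the chosen $\p$. If $\p'$ is another prime of $K_{m-1}$ above $\bar{\p}$, then $\p$ and $\p'$ are conjugate under $G(K_{m-1}/K_{m-2}) = G_K^{m-1}[m-2]$, say $\p' = g\p$ with $g \in G_K^{m-1}[m-2]$. By Proposition \ref{EquivMap}, $\theta_{m-1}^{(l')}(\p') = \sigma_{m-1}(g)\,\theta_{m-1}^{(l')}(\p)$, and since a homomorphism of profinite groups respects the derived-series filtration we have $\sigma_{m-1}(g) \in \sigma_{m-1}(G_K^{m-1}[m-2]) \subseteq G_L^{m-1}[m-2] = G(L_{m-1}/L_{m-2})$. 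Hence $\theta_{m-1}^{(l')}(\p)$ and $\theta_{m-1}^{(l')}(\p')$ have the same restriction to $L_{m-2}$, as required. This is exactly the content of the remark preceding the statement, specialised to $i = m-2$, so I would simply invoke it.

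It then remains to verify compatibility with $\sigma_{m-2}$. Given $g \in G_K^{m-2}$ and $\bar{\p} \in \lPrimes_{K_{m-2}}$, I would lift $g$ to $\hat{g} \in G_K^{m-1}$ and pick $\p \in \lPrimes_{K_{m-1}}$ above $\bar{\p}$, so that $\hat{g}\p$ lies above $g\bar{\p}$. By definition $\theta_{m-2}^{(l')}(g\bar{\p})$ is the restriction to $L_{m-2}$ of $\theta_{m-1}^{(l')}(\hat{g}\p) = \sigma_{m-1}(\hat{g})\,\theta_{m-1}^{(l')}(\p)$. Since the action of $G_L^{m-1}$ on $\Primes_{L_{m-2}}$ factors through $G_L^{m-2}$, and $\sigma_{m-1}(\hat{g})$ projects to $\sigma_{m-2}(g)$ by the commutative diagram of Proposition \ref{Diagram}, this restriction equals $\sigma_{m-2}(g)\,\theta_{m-2}^{(l')}(\bar{\p})$.

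I do not expect a serious obstacle: the whole corollary is a formal consequence of Proposition \ref{EquivMap} together with the functoriality of the projections $G_K^{m-1} \surj G_K^{m-2}$ and $G_L^{m-1} \surj G_L^{m-2}$, and the only point requiring care is the bookkeeping with the filtration step $m-2$ (i.e. checking that $\sigma_{m-1}$ carries $G_K^{m-1}[m-2]$ into $G_L^{m-1}[m-2]$, which is automatic). The mild subtlety worth stating explicitly is that restriction of primes does not alter residue characteristics, so the target of the descended map is indeed $\lPrimes_{L_{m-2}}$ and not merely $\Primes_{L_{m-2}}$.
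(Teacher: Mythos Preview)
Your proposal is correct and follows exactly the paper's approach: the paper presents this corollary as an immediate reformulation of the preceding remark (the special case $i=m-2$), which is precisely the well-definedness argument you spell out, and the compatibility with $\sigma_{m-2}$ is the straightforward descent via Proposition \ref{Diagram} that you describe. There is nothing to add; if anything, your write-up is more detailed than the paper's, which treats the whole thing as self-evident once the remark is stated.
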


\begin{proposition}\label{DecMap}
Let $m \ge 1$ be an integer and $\sigma_{m+1}: G_K^{m+1} \to G_L^{m+1}$ a homomorphism of profinite groups that satisfies condition $(\dagger_l)$. Let $\theta_m^{(l')}$ be the map of primes induced by $\sigma_{m+1}$ defined as in Remark \ref{lMapPrimes}. \\
Let $\p \in \lPrimes_{K_{m}}$, and set $\q = \theta_{m}^{(l')}(\p)$. Then, if we consider the decomposition groups $D_\p \subset G_K^m$ and $D_\q \subset G_L^m$, we have $\sigma_m(D_\p) \subseteq D_\q$ and the image of $D_\p$ is non-trivial.
\end{proposition}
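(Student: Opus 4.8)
The plan is to split the statement into its two assertions and prove each by transport of structure along the machinery of Section~3; neither is deep, but a little care with the indexing of levels is needed. For the containment $\sigma_m(D_\p)\subseteq D_\q$ I would argue purely by equivariance. Recall that $D_\p$ is, by definition, the stabiliser of $\p$ for the natural action of $G_K^m=G(K_m/K)$ on $\Primes_{K_m}$, and likewise $D_\q$ is the stabiliser of $\q$ in $G_L^m$. Applying Proposition~\ref{EquivMap} with $m+1$ in place of $m$ --- legitimate since $\sigma_{m+1}$ satisfies $(\dagger_l)$ and $m+1\ge 2$ --- yields that $\theta_m^{(l')}\colon\lPrimes_{K_m}\to\lPrimes_{L_m}$ is compatible with $\sigma_m$, i.e.\ $\theta_m^{(l')}(g\p)=\sigma_m(g)\,\theta_m^{(l')}(\p)$ for all $g\in G_K^m$. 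Hence, for $g\in D_\p$ we have $g\p=\p$, so $\sigma_m(g)\q=\sigma_m(g)\theta_m^{(l')}(\p)=\theta_m^{(l')}(g\p)=\theta_m^{(l')}(\p)=\q$; that is, $\sigma_m(g)\in D_\q$. This gives $\sigma_m(D_\p)\subseteq D_\q$.

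For the non-triviality of $\sigma_m(D_\p)$ I would pass one step up and exhibit a non-trivial subgroup of $D_\p$ that $\sigma_m$ does not kill. Let $a=\phi_{m+1,l,K}^{-1}(\p)\in\D_{m+1,l,K}$, using that $\p\in\lPrimes_{K_m}$ and that $\phi_{m+1,l,K}$ restricts to a bijection onto $\lPrimes_{K_m}$ (Proposition~\ref{STMapping}(2)), and choose a representative $(\star_l)$-subgroup $F\in a$ of $G_K^{m+1}$. By Proposition~\ref{starlisopen}, $F$ is open in an $l$-Sylow subgroup of the decomposition group $D_{\p'}\subset G_K^{m+1}$ of some $\p'\in\Primes_{K_{m+1}}$ restricting to $\p$; hence the image $\overline F$ of $F$ under $G_K^{m+1}\surj G_K^m$ is contained in the image of $D_{\p'}$, which is $D_\p$ (the image of a decomposition group is the decomposition group of the restricted prime, cf.\ the proof of Proposition~\ref{starldown}). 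Since $\sigma_{m+1}$ satisfies $(\dagger_l)$, the group $F'=\sigma_{m+1}(F)$ is a $(\star_l)$-subgroup of $G_L^{m+1}$, and in particular non-abelian (a $(\star_l)$-subgroup is $l$-decomposition-like, cf.\ Definition~\ref{starldef} and the proof of Proposition~\ref{starldown}). The kernel of $G_L^{m+1}\surj G_L^m$ is $G_L^{m+1}[m]$, which is abelian, so $F'$ is not contained in it; therefore the image $\overline{F'}$ of $F'$ in $G_L^m$ is non-trivial. By the commutative square of Proposition~\ref{Diagram} one has $\overline{F'}=\sigma_m(\overline F)$, and so $\sigma_m(D_\p)\supseteq\sigma_m(\overline F)=\overline{F'}\neq\{1\}$.

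I do not expect a real obstacle here; the points to watch are that the equivariance of $\theta$ must be invoked at level $m+1$ (so that it intertwines $\sigma_m$, not $\sigma_{m+1}$), and that the $(\star_l)$-subgroup in the second half is chosen in $G_K^{m+1}$ precisely so that its projection lands in $D_\p\subset G_K^m$ while still being moved injectively by $\sigma_m$. This also handles $m=1$ uniformly: there $(\star_l)$-subgroups of $G_K^m$ do not exist, but $F$ still lives in $G_K^2$, and the abelianness of $\ker(G_L^2\surj G_L^1)=G_L^2[1]$ is exactly what forces $\overline{F'}\neq\{1\}$. The one imported fact is that $(\star_l)$-subgroups are non-abelian, which comes from the structure theory recalled in Section~1.
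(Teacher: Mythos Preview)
Your proof is correct and follows essentially the same route as the paper: the containment is the stabiliser/equivariance argument (the paper phrases it via $\Stab(a)=D_\p$ and Proposition~\ref{STMapping}(4), you via Proposition~\ref{EquivMap}, which is the same thing packaged one step earlier), and the non-triviality is obtained by pushing an $l$-Sylow/$(\star_l)$-subgroup of a decomposition group at level $m+1$ through $\sigma_{m+1}$. Your non-triviality step is in fact spelled out more carefully than the paper's one-line reference to Proposition~\ref{starllift}: by observing that $\sigma_{m+1}(F)$ is non-abelian while $\ker(G_L^{m+1}\surj G_L^m)$ is abelian, you get the conclusion uniformly for $m\ge 1$ without needing Proposition~\ref{starllift} (whose hypotheses, as stated, ask for $m\ge 2$ and an ambient $\sigma_{m+2}$).
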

\begin{proof}
Let $a = \phi^{-1}_{m,l,K}(\p)$ and $h \in D_\p$. Then, by Proposition \ref{STMapping}, for all $F \in a$ we get $hFh^{-1} \in a$. It then follows by Proposition \ref{DaggerlMap} that $\sigma_{m}(h) \tilde{\sigma}^*_{m,l}(F) \sigma_{m}(h^{-1}) \in \sigma^*_{m,l}(a)$, and by commutativity of the diagram $\sigma_m(h) \in \Stab(\sigma^*_{m,l}(a))$, that is $\sigma_m(h) \in D_\q$. \\
The non-triviality follows immediately from the fact that the $l$-Sylow subgroups of $D_{\p}$ are mapped injectively by the definition of $(\dagger_l)$ and Proposition \ref{starllift}.
\end{proof}

Now that we established that $\theta_m^{(l')}$ is compatible with $\sigma_m$ and allows us to construct a map between decomposition groups, we want to check if this map is independent of the choice of $l$.

\begin{corollary}\label{DecMapSep}
Let $m \ge 1$ be an integer and $\sigma_{m+1}: G_K^{m+1} \to G_L^{m+1}$ a homomorphism of profinite groups. Assume that, for two distinct prime numbers $l_1$ and $l_2$, $\sigma_{m+1}$ satisfies both conditions $(\dagger_{l_1})$ and $(\dagger_{l_2})$. \\
Let $\p \in \Primes_{K_m}$ be a prime with residue characteristic different from $l_1$ and $l_2$. Then, $\q_1 = \theta_{m}^{(l_1')}(\p)$ and $\q_2 = \theta_{m}^{(l_2')}(\p)$ are above the same prime $\bar{\q}$ of $L_{m-1}$.
\end{corollary}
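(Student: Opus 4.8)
The plan is to reduce the statement to the non-triviality clause of Proposition~\ref{DecMap} together with the separatedness statement of Proposition~\ref{mSeparatedness}. The key preliminary observation is that a decomposition group $D_\p \subset G_K^m$ of $\p$ can be chosen once and for all, independently of any auxiliary prime number: it depends only on $\p$ (and a choice of prime of $K_m$ above it), not on $l_1$ or $l_2$.

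Since $\p$ avoids the residue characteristics $l_1$ and $l_2$, both $\q_1 = \theta_{m}^{(l_1')}(\p)$ and $\q_2 = \theta_{m}^{(l_2')}(\p)$ are defined, as elements of $\Primes_{L_m}$. Applying Proposition~\ref{DecMap} with $l = l_1$ produces a decomposition group $D_{\q_1} \subset G_L^m$ of $\q_1$ with $\sigma_m(D_\p) \subseteq D_{\q_1}$ and $\sigma_m(D_\p) \neq \{1\}$; applying it a second time with $l = l_2$, and with the same $D_\p$, produces a decomposition group $D_{\q_2} \subset G_L^m$ of $\q_2$ with $\sigma_m(D_\p) \subseteq D_{\q_2}$. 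Hence $\{1\} \neq \sigma_m(D_\p) \subseteq D_{\q_1} \cap D_{\q_2}$, so these two decomposition groups in $G_L^m$ meet non-trivially. By the first assertion of Proposition~\ref{mSeparatedness}, applied with the number field $L$ in place of $K$, this forces $\q_1$ and $\q_2$ to restrict to the same prime $\bar{\q}$ of $L_{m-1}$, which is the claim (and it then follows a posteriori that $\q_1$, $\q_2$ share the residue characteristic of $\bar{\q}$).

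The argument is short, so I do not anticipate a serious obstacle; the two things to be careful about are that the subgroup $D_\p$ appearing in the two invocations of Proposition~\ref{DecMap} is literally the same subgroup of $G_K^m$, so that its image is simultaneously contained in $D_{\q_1}$ and $D_{\q_2}$, and that Proposition~\ref{DecMap} genuinely delivers $\sigma_m(D_\p) \neq \{1\}$. This non-triviality, which ultimately comes from $(\dagger_{l_i})$ forcing injectivity on the appropriate $l_i$-Sylow subgroups together with Proposition~\ref{starllift}, is exactly what rules out the degenerate case $D_{\q_1} \cap D_{\q_2} = \{1\}$, in which Proposition~\ref{mSeparatedness} would give no information.
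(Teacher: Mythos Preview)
Your argument is correct and is exactly the paper's approach: apply Proposition~\ref{DecMap} twice (once for each $l_i$) to the same decomposition group $D_\p$ so that $\sigma_m(D_\p)$ lies in $D_{\q_1}\cap D_{\q_2}$ and is non-trivial, then invoke Proposition~\ref{mSeparatedness}. Your added care about using the same $D_\p$ in both invocations and about the role of non-triviality simply makes explicit what the paper leaves implicit.
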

\begin{proof}
Observe that by Proposition \ref{DecMap}, $\sigma_m(D_{\p})$ is non-trivial and contained in both $D_{\q_1}$ and $D_{\q_2}$. The statement now follows from Proposition \ref{mSeparatedness}
\end{proof}

\begin{proposition}\label{MapPrimes}
Let $m \ge 0$ be an integer and let $\sigma_{m+2}: G_K^{m+2} \to G_L^{m+2}$ be a homomorphism of groups satisfying condition $(\dagger)$.\\ 
Then, we may construct a well-defined map $\theta_{m}: \Primes_{K_m} \to \Primes_{L_m}$ compatible with the induced homomorphism $\sigma_m: G_K^m \to G_L^m$ in the following sense: if $g \in G_K^{m}$, $\p \in \Primes_{K_m}$, then $\theta_m(g \p) = \sigma_{m}(g) \theta_m(\p)$.
\end{proposition}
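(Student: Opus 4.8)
The plan is to assemble $\theta_m$ out of the $l$-local prime maps of the previous section, then verify independence of the auxiliary prime $l$ and equivariance under $\sigma_m$. For each prime number $l$, since $\sigma_{m+2}$ satisfies $(\dagger_l)$, Remark~\ref{lMapPrimes} applied to $\sigma_{m+2}$ furnishes a map $\theta_{m+1}^{(l')}\colon \lPrimes_{K_{m+1}} \to \lPrimes_{L_{m+1}}$, and Corollary~\ref{MapDown} produces its one-step descent $\theta_m^{(l')}\colon \lPrimes_{K_m} \to \lPrimes_{L_m}$, together with the commutative square (via the restriction maps) relating $\theta_{m+1}^{(l')}$ and $\theta_m^{(l')}$ and the equivariance $\theta_m^{(l')}(g\p) = \sigma_m(g)\,\theta_m^{(l')}(\p)$ for $g \in G_K^m$. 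Given $\p \in \Primes_{K_m}$ with residue characteristic $p = \ch(\p)$, observe $\p \in \lPrimes_{K_m}$ for every prime $l \neq p$ (and there are infinitely many such $l$); I would \emph{define} $\theta_m(\p) := \theta_m^{(l')}(\p)$ for any one such $l$.

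The heart of the argument is to show this is independent of the choice of $l$. Fix distinct primes $l_1, l_2$, both $\neq p$, pick a prime $\hat\p \in \Primes_{K_{m+1}}$ lying over $\p$ (so $\hat\p \in \lPrimes_{K_{m+1}}$ for both $l_1$ and $l_2$, as $\ch(\hat\p) = p$), and set $\hat\q_i := \theta_{m+1}^{(l_i')}(\hat\p)$. Proposition~\ref{DecMap}, applied at level $m+1$ (legitimate since $m+1 \geq 1$ and $\sigma_{m+2}$ satisfies $(\dagger_{l_i})$), gives $\sigma_{m+1}(D_{\hat\p}) \subseteq D_{\hat\q_i}$ with $\sigma_{m+1}(D_{\hat\p})$ non-trivial, for $i = 1, 2$. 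Hence $D_{\hat\q_1} \cap D_{\hat\q_2}$ contains the non-trivial subgroup $\sigma_{m+1}(D_{\hat\p})$, so Proposition~\ref{mSeparatedness} (for the field $L$, at level $m+1$) forces $\hat\q_1$ and $\hat\q_2$ to have the same image in $\Primes_{L_m}$. By the commutative square of Corollary~\ref{MapDown} that image is $\theta_m^{(l_i')}(\p)$, whence $\theta_m^{(l_1')}(\p) = \theta_m^{(l_2')}(\p)$ and $\theta_m$ is well defined. This reasoning is uniform in $m \geq 0$: when $m = 0$ it only invokes Proposition~\ref{mSeparatedness} at level $1$, where the conclusion is equality of images in $K_0 = K$ (resp. $L_0 = L$) --- exactly what is needed --- and does not use the sharper statement available only for $m \geq 2$.

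Finally I would record compatibility with $\sigma_m$. For $g \in G_K^m$ and $\p \in \Primes_{K_m}$ we have $\ch(g\p) = \ch(\p) = p$, so a single auxiliary prime $l \neq p$ computes $\theta_m$ at both $\p$ and $g\p$; hence $\theta_m(g\p) = \theta_m^{(l')}(g\p) = \sigma_m(g)\,\theta_m^{(l')}(\p) = \sigma_m(g)\,\theta_m(\p)$ by the equivariance of $\theta_m^{(l')}$ from Corollary~\ref{MapDown}. I expect the main obstacle to be exactly the well-definedness step: comparing $\theta_m^{(l_1')}(\p)$ and $\theta_m^{(l_2')}(\p)$ \emph{directly} at level $m$ would, via Propositions~\ref{DecMap} and~\ref{mSeparatedness}, only show they restrict to the same prime of $L_{m-1}$; passing to a prime $\hat\p$ one step higher and using $(m{+}1)$-separatedness is what upgrades this to an honest equality in $\Primes_{L_m}$, and is precisely why the hypothesis is placed on $\sigma_{m+2}$ rather than on $\sigma_{m+1}$.
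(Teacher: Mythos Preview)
Your proof is correct and follows essentially the same route as the paper: construct $\theta_{m+1}^{(l')}$ from $\sigma_{m+2}$, descend to $\theta_m^{(l')}$ via Corollary~\ref{MapDown}, then glue over all $l$ using $(m{+}1)$-separatedness. The only difference is cosmetic: the paper packages the well-definedness step into a citation of Corollary~\ref{DecMapSep} (applied at level $m{+}1$), whereas you unfold that argument inline via Proposition~\ref{DecMap} and Proposition~\ref{mSeparatedness}; your closing remark on why the hypothesis sits on $\sigma_{m+2}$ rather than $\sigma_{m+1}$ makes explicit exactly the index shift that the paper's citation leaves implicit.
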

\begin{proof}
For $l$ ranging over all prime numbers, by Proposition \ref{EquivMap} we may construct maps $\theta_{m+1}^{(l')}:\lPrimes_{K_{m+1}}\to \lPrimes_{L_{m+1}}$ compatible with $\sigma_{m+1}$, and by Corollary \ref{MapDown} we may induce a well-defined map $\theta_m^{(l')}: \lPrimes_{K_m} \to \lPrimes_{L_m}$. \\
It then follows from Corollary \ref{DecMapSep} that, for a fixed prime $\p$ of $K_m$, all of the images of $\p$ with respect to all the maps $\theta_m^{(l')}$ coincide, and we may therefore ``glue'' them together to obtain a map $\theta_{m}: \Primes_{K_m} \to \Primes_{L_m}$ , as desired. \\
The final part of the statement follows as each map $\theta_m^{(l')}$ is compatible with $\sigma_m$ by \ref{EquivMap}, therefore $\theta$ inherits this property by construction.
\end{proof}

The next few results show that this map of primes induces maps of primes which are both compatible with maps induced by $\sigma_m$ as in Remark \ref{quotients} and are not altered by these quotients.

\begin{corollary}\label{MapPrimesNF}
With the same assumptions and notations as in Proposition \ref{MapPrimes}, for all $0 \le i \le m-1$, a map $\theta_{i}: \Primes_{K_i} \to \Primes_{L_i}$ may be constructed such that, for all $0 \le i \le j \le m$, the following diagram where the vertical arrows are the natural restrictions, is commutative:
\begin{center}
\begin{tikzcd}
\Primes_{K_j} \arrow["\theta_{j}"]{r} \arrow[two heads,d] & \Primes_{L_j} \arrow[two heads, d] \\
\Primes_{K_i} \arrow["\theta_{i}"]{r} & \Primes_{L_i}
\end{tikzcd}
\end{center}
In particular, for $i=0$, we have a map $\theta: \Primes_K \to \Primes_L$. 
\end{corollary}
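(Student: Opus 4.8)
The plan is to build each $\theta_i$ directly from the already-constructed $\theta_m$ by descent along the restriction maps, rather than re-running the construction of Proposition~\ref{MapPrimes} at every level. For $0 \le i \le m$ and $\p \in \Primes_{K_i}$ I would choose any $\widehat{\p} \in \Primes_{K_m}$ lying above $\p$ — one exists because $K_m/K_i$ is an algebraic extension — and declare $\theta_i(\p)$ to be the restriction to $L_i$ of $\theta_m(\widehat{\p}) \in \Primes_{L_m}$. For $i = m$ this recovers $\theta_m$, so it remains to check: (a) $\theta_i$ does not depend on the choice of $\widehat{\p}$; (b) the squares in the statement commute; and (c), as a bonus in the spirit of Proposition~\ref{MapPrimes}, that $\theta_i$ is compatible with the induced homomorphism $\sigma_i\colon G_K^i \to G_L^i$.

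For (a), the key point is that $K_m/K_i$ is Galois with group $G_K^m[i] = G(K_m/K_i)$ and that a Galois group acts transitively on the non-archimedean primes above a fixed prime of the base; hence two choices $\widehat{\p},\widehat{\p}'$ above the same $\p$ satisfy $\widehat{\p}' = g\widehat{\p}$ for some $g \in G_K^m[i]$. Since a homomorphism of profinite groups carries $G_K^m[i]$ into $G_L^m[i] = G(L_m/L_i)$, and $\theta_m$ is $\sigma_m$-equivariant by Proposition~\ref{MapPrimes}, the primes $\theta_m(\widehat{\p})$ and $\theta_m(\widehat{\p}') = \sigma_m(g)\theta_m(\widehat{\p})$ lie in a single $G(L_m/L_i)$-orbit and hence have the same restriction to $L_i$. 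This is really the only step with content.

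For (b), given $0 \le i \le j \le m$ and $\p \in \Primes_{K_j}$ with restriction $\p_i$ to $K_i$, I would pick $\widehat{\p} \in \Primes_{K_m}$ above $\p$; it also lies above $\p_i$, so by construction $\theta_j(\p)$ and $\theta_i(\p_i)$ are the restrictions of the single prime $\theta_m(\widehat{\p})$ to $L_j$ and to $L_i$ respectively, and since $\Primes_{L_m}\to\Primes_{L_i}$ factors through $\Primes_{L_j}$ the square commutes; setting $i = 0$ and using $K_0 = K$ gives $\theta\colon\Primes_K \to \Primes_L$. For (c), given $g \in G_K^i$ and $\p\in\Primes_{K_i}$, I would lift $g$ to $\widehat{g}\in G_K^m$ and $\p$ to $\widehat{\p}$ above $\p$; then $\widehat{g}\widehat{\p}$ lies above $g\p$, so $\theta_i(g\p)$ is the restriction of $\sigma_m(\widehat{g})\theta_m(\widehat{\p})$ to $L_i$, which equals $\sigma_i(g)\theta_i(\p)$ once one identifies the image of $\sigma_m(\widehat{g})$ in $G_L^i$ with $\sigma_i(g)$ via Proposition~\ref{Diagram}.

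I do not expect a genuine obstacle here: the whole argument reduces to the well-definedness in (a), which uses only transitivity of a Galois action and the trivial inclusion $\sigma_m(G_K^m[i]) \subseteq G_L^m[i]$, together with the equivariance of $\theta_m$; the rest is bookkeeping with towers of restriction maps. A more laborious alternative would be to apply Proposition~\ref{MapPrimes} separately to each homomorphism $\sigma_{i+2}\colon G_K^{i+2}\to G_L^{i+2}$ induced from $\sigma_{m+2}$ — each still satisfies $(\dagger)$ by Proposition~\ref{daggerldown} — and then reconcile the resulting maps level by level using Corollary~\ref{MapDown}, but the direct descent above is shorter and makes the commutativity immediate.
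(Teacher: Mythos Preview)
Your proof is correct and follows essentially the same approach as the paper: the paper's one-sentence proof invokes the equivariance of $\theta_m$ established in Proposition~\ref{MapPrimes} to justify the descent, and you have simply spelled out in detail why that equivariance, together with the inclusion $\sigma_m(G_K^m[i]) \subseteq G_L^m[i]$ and the transitivity of the Galois action on primes above a given prime, makes each $\theta_i$ well-defined and the squares commutative. Your part~(c) even makes explicit the compatibility with $\sigma_i$, which the paper only gestures at.
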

\begin{proof}
This follows immediately from the fact that for all $g \in G_K^m$ and integers $i$ such that $0 \le i \le m$, $\p \in \Primes_{K_i}$, $\theta_i(g \p) = \sigma_{m+1}(g) \theta_i(\p)$.\\
\end{proof}

\begin{corollary}\label{MapPrimesDown}
With the same assumptions as in Proposition \ref{MapPrimes}, let $0 \le j \le m$ be an integer. The map $\sigma_j: G_K^j \to G_L^j$ determined by $\sigma_{m+1}$ (cf. Proposition \ref{Diagram}), induces the same maps of primes $\theta_i: \Primes_{K_i} \to \Primes_{L_i}$ for all $0 \le i \le j-1$ as $\sigma_{m+1}$.
\end{corollary}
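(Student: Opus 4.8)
The plan is to exploit the fact that the whole construction of the maps $\theta_i$ is assembled, level by level, out of data sitting one or two steps above $\Primes_{K_i}$, and that these data are themselves stable under passing from step $n$ to step $n-1$. First, by Proposition \ref{daggerldown} applied repeatedly, the induced homomorphism $\sigma_j$ again satisfies condition $(\dagger)$, so Proposition \ref{MapPrimes} and Corollary \ref{MapPrimesNF} do apply to $\sigma_j$ and produce maps $\theta_i'\colon\Primes_{K_i}\to\Primes_{L_i}$ for $i$ in the appropriate range; we must show $\theta_i'=\theta_i$. Since $\theta_i$ is obtained by gluing the maps $\theta_i^{(l')}\colon\lPrimes_{K_i}\to\lPrimes_{L_i}$ over all primes $l$ (Proposition \ref{MapPrimes}), and since Corollary \ref{MapPrimesNF} reduces the statement for an arbitrary $i$ to the statement for the top admissible one, it suffices to prove a one-step invariance: if $\sigma_n\colon G_K^n\to G_L^n$ satisfies $(\dagger_l)$ and $\sigma_{n-1}$ is the induced map, then the map $\theta_{n-2}^{(l')}$ built from $\sigma_{n-1}$ agrees with the one built from $\sigma_n$. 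Iterating this step from $\sigma_{m+2}$ down to $\sigma_j$ then gives $\theta_i'=\theta_i$ on the common range.

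For the one-step invariance, fix $l$ and write the map built from $\sigma_n$ as $\theta_{n-1}^{(l')}=\phi_{n,l,L}\circ\sigma^*_{n,l}\circ\phi_{n,l,K}^{-1}$; by Corollary \ref{MapDown} the map $\theta_{n-2}^{(l')}$ built from $\sigma_n$ is characterised by the commutative square whose vertical arrows are the restrictions $\lPrimes_{K_{n-1}}\surj\lPrimes_{K_{n-2}}$ and $\lPrimes_{L_{n-1}}\surj\lPrimes_{L_{n-2}}$, and since these restrictions are surjective it is the unique map filling in that square. The map built from $\sigma_{n-1}$ is $\phi_{n-1,l,L}\circ\sigma^*_{n-1,l}\circ\phi_{n-1,l,K}^{-1}$. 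I would identify the two by means of the descent maps $\tD_{n,l,K}\to\tD_{n-1,l,K}$, $F\mapsto\overline F$, of Proposition \ref{starldown} (and likewise over $L$), checking three compatibilities: (i) since $F\to\overline F$ is an isomorphism, a short manipulation with Definition \ref{equivdef} shows that $F\mapsto\overline F$ respects $\approx$ and hence descends to a map $\D_{n,l,K}\to\D_{n-1,l,K}$; (ii) through the bijections $\phi_{n,l,K},\phi_{n-1,l,K}$ this descent corresponds precisely to the restriction $\lPrimes_{K_{n-1}}\surj\lPrimes_{K_{n-2}}$, because by Proposition \ref{starlisopen} the prime of $K_{n-2}$ lying under $\overline F$ is the restriction of the prime of $K_{n-1}$ lying under $F$; (iii) by the commutativity of the diagram in Proposition \ref{Diagram}, $\sigma_{n-1}(\overline F)$ is the image of $\sigma_n(F)$ in $G_L^{n-1}$, and as $\sigma_n(F)$ is a $(\star_l)$-subgroup this image is exactly the descent of $\sigma_n(F)$ (Proposition \ref{starldown} over $L$), so $\sigma^*_{n,l}$ and $\sigma^*_{n-1,l}$ are intertwined by the two descent maps. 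Combining (i)--(iii), $\phi_{n-1,l,L}\circ\sigma^*_{n-1,l}\circ\phi_{n-1,l,K}^{-1}$ fills in the Corollary \ref{MapDown} square, hence coincides with $\theta_{n-2}^{(l')}$ built from $\sigma_n$.

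Finally, gluing over all $l$ as in Proposition \ref{MapPrimes} upgrades the one-step invariance of the $\theta_{n-2}^{(l')}$ to that of $\theta_{n-2}$, and Corollary \ref{MapPrimesNF} propagates it to every lower $\theta_i$; the induction sketched above then completes the proof. The \textbf{main obstacle} is the bookkeeping of the second paragraph: establishing that the descent maps $F\mapsto\overline F$ are simultaneously compatible with the equivalence $\approx$, with the prime-restriction maps through the bijections $\phi_{\bullet,l,\bullet}$, and with the maps $\sigma^*_{\bullet,l}$. No single deep input is needed beyond Propositions \ref{starldown}, \ref{starlisopen} and \ref{Diagram}; once the three compatibilities are secured, the identification is a diagram chase closed off by the (surjectivity-based) uniqueness in Corollary \ref{MapDown}.
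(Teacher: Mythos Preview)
Your argument is correct, but it proves a substantially stronger statement than the paper's own proof does, under a different reading of what ``induces'' means.

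The paper's proof is a single sentence: it interprets the corollary as the assertion that the equivariance property $\theta_i(g\p)=\sigma_{m+1}(g)\theta_i(\p)$ (established in Proposition \ref{MapPrimes} and the proof of Corollary \ref{MapPrimesNF}) descends to $\theta_i(g\p)=\sigma_j(g)\theta_i(\p)$ for $g\in G_K^j$. This is immediate, because the $G_K^{m+1}$-action on $\Primes_{K_i}$ factors through $G_K^i$, and $\sigma_{m+1}$ and $\sigma_j$ are compatible via the diagram of Proposition \ref{Diagram}. No reconstruction of $\theta_i$ from $\sigma_j$ is carried out or needed.

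You instead interpret the corollary as the claim that \emph{re-running the entire $(\star_l)$-construction starting from $\sigma_j$} reproduces the same maps $\theta_i$. Your proof of this is sound: the descent $F\mapsto\overline F$ of Proposition \ref{starldown} intertwines $\widetilde\sigma^*_{n,l}$ with $\widetilde\sigma^*_{n-1,l}$ by Proposition \ref{Diagram}, and corresponds under $\phi_{\bullet,l,K}$ to prime-restriction by Proposition \ref{starlisopen}; together with the surjectivity in Corollary \ref{MapDown} this forces agreement. (Your compatibility (i) is in fact a consequence of (ii) via the injectivity of $\phi_{\bullet,l,K}$, so you need not verify the $\approx$-compatibility directly.) One small point: applying Proposition \ref{MapPrimes} to $\sigma_j$ yields $\theta_i'$ only for $0\le i\le j-2$, so your argument covers that range; the case $i=j-1$ is only available under the paper's weaker reading, which is presumably why the statement allows $i\le j-1$.

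What you gain is the genuinely useful consistency statement that the prime maps are insensitive to the level at which one runs the $(\star_l)$-machinery; what the paper's approach buys is brevity, at the cost of asserting only the equivariance descent.
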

\begin{proof}
This follows immediately from Proposition \ref{MapPrimes} and Corollary \ref{MapPrimesNF} as the action of $G_K^{m+1}$ on $\Primes_{K_i}$ factors through $G_K^i$, so we get our statement from the commutativity of the diagram in Proposition \ref{Diagram}.
\end{proof}

\begin{corollary}\label{MapPrL}
With the assumptions of Proposition \ref{MapPrimes}, let $\widetilde{L}$ be the subfield of $L_m/L$ corresponding to $\sigma_m(G_K^m)$. Then, there exists a unique map $\tilde{\theta}: \Primes_K \to \Primes_{\widetilde{L}}$ such that $\theta: \Primes_K \to \Primes_L$ factors through $\Primes_{\widetilde{L}}$. 
\end{corollary}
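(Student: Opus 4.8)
The strategy is to construct $\tilde\theta$ from the map $\theta_m : \Primes_{K_m} \to \Primes_{L_m}$ provided by Proposition \ref{MapPrimes}, exploiting that $\theta_m$ is $G_K^m$-equivariant and that $\sigma_m(G_K^m)$ is exactly the Galois group of $L_m/\widetilde L$. First I would fix notation for the various restriction maps: let $\pi_K : \Primes_{K_m} \to \Primes_K$ and $\pi_L : \Primes_{L_m} \to \Primes_L$ be restriction to the bottoms of the two towers, $\rho : \Primes_{L_m} \to \Primes_{\widetilde L}$ restriction to $\widetilde L$, and $r : \Primes_{\widetilde L} \to \Primes_L$ restriction to $L$, so that $\pi_L = r \circ \rho$. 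By Corollary \ref{MapPrimesNF} (the case $i=0$, $j=m$) we have $\pi_L \circ \theta_m = \theta \circ \pi_K$. Finally, since $\sigma_m(G_K^m)$ is the continuous image of the compact group $G_K^m$ it is closed in $G_L^m$, so the Galois correspondence for the (possibly infinite) extension $L_m/L$ gives $G(L_m/\widetilde L) = \sigma_m(G_K^m)$.

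I would then define $\tilde\theta$ as follows: given $\p \in \Primes_K$, choose any $\widehat\p \in \pi_K^{-1}(\p)$ (possible since $\pi_K$ is surjective) and set $\tilde\theta(\p) := \rho(\theta_m(\widehat\p))$. The key step, which I expect to be the \emph{main obstacle}, is the independence of this from the chosen lift. If $\widehat\p, \widehat\p' \in \pi_K^{-1}(\p)$, then, as $G_K^m = G(K_m/K)$ acts transitively on the primes of $K_m$ above $\p$, there is $g \in G_K^m$ with $\widehat\p' = g\widehat\p$; the $G_K^m$-compatibility of $\theta_m$ from Proposition \ref{MapPrimes} then gives $\theta_m(\widehat\p') = \sigma_m(g)\cdot\theta_m(\widehat\p)$, and $\sigma_m(g) \in \sigma_m(G_K^m) = G(L_m/\widetilde L)$ fixes $\widetilde L$ pointwise, so $\rho(\theta_m(\widehat\p')) = \rho(\theta_m(\widehat\p))$. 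Hence $\tilde\theta$ is well defined and, by construction, satisfies $\rho \circ \theta_m = \tilde\theta \circ \pi_K$.

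It remains to see that $\theta$ factors through $\tilde\theta$, and that $\tilde\theta$ is unique. Applying $r$ to $\rho \circ \theta_m = \tilde\theta \circ \pi_K$ and using $\pi_L = r\circ\rho$ and $\pi_L \circ \theta_m = \theta \circ \pi_K$ yields $r \circ \tilde\theta \circ \pi_K = \theta \circ \pi_K$, and surjectivity of $\pi_K$ forces $r \circ \tilde\theta = \theta$, i.e. $\theta$ factors through $\Primes_{\widetilde L}$ via $\tilde\theta$. For uniqueness, note that any factorisation compatible with the construction must make the square with $\theta_m$, $\pi_K$ and $\rho$ commute, and since $\pi_K$ is surjective this determines $\tilde\theta$ completely: $\tilde\theta(\p)$ is forced to be the common restriction to $\widetilde L$ of the primes $\theta_m(\widehat\p)$ as $\widehat\p$ ranges over the lifts of $\p$. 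All of the real content sits in the well-definedness argument of the previous paragraph; everything else is diagram chasing with the maps supplied by Proposition \ref{MapPrimes} and Corollary \ref{MapPrimesNF}.
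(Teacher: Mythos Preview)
Your proof is correct and takes essentially the same approach as the paper's: lift a prime of $K$ to $K_m$, use transitivity of the $G_K^m$-action together with the $\sigma_m$-compatibility of $\theta_m$ from Proposition~\ref{MapPrimes} to see that the image in $\widetilde L$ is independent of the chosen lift. Your write-up is considerably more explicit than the paper's three-sentence argument and, unlike the paper, actually addresses uniqueness; note that uniqueness under the bare factorisation condition $r\circ\tilde\theta=\theta$ alone would not hold (since $r$ is not injective), so your reading of uniqueness as determined by the commuting square with $\theta_m$ and $\pi_K$ is the intended one.
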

\begin{proof}
Let $\hat{\p}_1, \hat{\p}_2$ be any two primes of $K_m$ above the same prime $\p \in \Primes_K$. Then, there exists $g \in G_K^m$ such that $g \hat{\p}_1 = \hat{\p}_2$, and it follows by Proposition \ref{MapPrimes} that the images of $\hat{\p}_1$ and $\hat{\p}_2$ are above the same prime in the field fixed by  $\sigma_m(G_K^m)$, that is $\widetilde{L}$. The statement now follows immediately.
\end{proof}

\begin{remark}
Observe that, if we take a prime $\p \in \Primes_{K_{m+1}}$ with decomposition group $D_{\p} \subset G_K^{m+1}$ we show in Proposition \ref{DecMap} that $\sigma_{m+1}(D_{\p})$ is mapped to the decomposition group $D_{\q} \subset G_L^{m+1}$ of some prime $\q \in \Primes_{L_{m-1}}$ however $\q$ is not necessarily unique (a priori). \\
If we however were to consider the primes $\bar{\p}$ of $K$ below $\p$ and $\bar{\q}$ of $L$ below $\q$, we may take the decomposition groups determined up to conjugation and in this case the uniqueness of $\q$ would hold. \\ Depending on the information we are looking for it then might be useful to just consider decomposition groups up to conjugation (as we do later in Proposition \ref{NormMap}). \end{remark}

We end this section with a statement that shows that these maps of primes preserve the residue characteristic of the primes.

\begin{proposition}\label{MapResChar}
Let $m \ge 1$ be an integer, and $\sigma_{m+2}: G_K^{m+2} \to G_L^{m+2}$ a homomorphism satisfying condition $(\dagger)$.
Then, for all $0 \le i \le m$, let us consider the maps $\Primes_{K_i} \surj \Primes_{\Q}$ and $\Primes_{L_i} \surj \Primes_{\Q}$ given by residue characteristics. 
Then, we have a commutative diagram
\begin{center}
\begin{tikzcd}
\Primes_{K_i} \arrow["\theta_{i}",rr] \arrow[two heads,dr] && \Primes_{L_i} \arrow[two heads, dl] \\
&\Primes_{\Q}&
\end{tikzcd}
\end{center}
\end{proposition}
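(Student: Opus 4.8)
The plan is to show that the map $\theta_i \colon \Primes_{K_i} \to \Primes_{L_i}$ constructed in Proposition \ref{MapPrimes} preserves residue characteristics, and the cleanest route is to reduce everything to the base level $i=0$. By Corollary \ref{MapPrimesNF} the maps $\theta_i$ are compatible with the natural restriction maps $\Primes_{K_i} \surj \Primes_K$ and $\Primes_{L_i} \surj \Primes_L$; since the residue-characteristic map $\Primes_{K_i} \surj \Primes_{\Q}$ also factors through $\Primes_K \surj \Primes_{\Q}$, it suffices to prove the claim for $i=0$, i.e.\ that $\theta \colon \Primes_K \to \Primes_L$ sends a prime of residue characteristic $p$ to a prime of residue characteristic $p$.

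First I would recall how $\theta$ was built: it is the "glued" map obtained from the maps $\theta_0^{(l')} \colon \lPrimes_{K} \to \lPrimes_L$ coming, via the bijections $\phi_{1,l,K}$ and $\phi_{1,l,L}$ of Proposition \ref{STMapping}, from the maps $\widetilde{\sigma}^*_{1,l}$ on $(\star_l)$-subgroups (applied at level $m=1$, which requires the auxiliary homomorphism one step up). The key point is that, by Proposition \ref{starlisopen}, a $(\star_l)$-subgroup $F \subset G_K^{m}$ above a prime $\p$ is an open subgroup of an $l$-Sylow subgroup of the decomposition group $D_\p$, and such subgroups exist precisely when $\ch(\p) \neq l$; moreover, by Proposition \ref{DecMap}, $\sigma_m$ sends $D_\p$ into $D_{\q}$ where $\q = \theta(\p)$, with non-trivial image. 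So I would fix a prime $\p \in \Primes_K$ of residue characteristic $p$, choose any prime $l \neq p$, lift $\p$ to a prime of $K_1$ (or the appropriate higher level), and use that $\sigma$ maps an $l$-decomposition-like subgroup $F \subseteq D_{\p}$ isomorphically onto an $l$-decomposition-like subgroup $F' \subseteq D_{\q}$ (by the classification following Remark \ref{quotients}, case \ref{declikeinj}, which holds since $(\dagger_l)$ forces injectivity). By Proposition \ref{starlisopen} applied in $G_L$, the existence of a $(\star_l)$-subgroup inside $D_{\q}$ forces $\ch(\q) \neq l$. Since this holds for every prime $l \neq p$, we conclude $\ch(\q) = p$: the only residue characteristic not excluded is $p$ itself. (One must also rule out $\q$ being archimedean, but the primes in $\Primes_{L}$ are by definition non-archimedean, and $D_\q$ containing an infinite pro-$l$ subgroup already excludes the archimedean case since archimedean decomposition groups are of order $\le 2$ by Proposition \ref{TamagawaTorsion}.)

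The main obstacle I anticipate is purely bookkeeping: making sure the level at which one invokes $(\star_l)$-subgroups is consistent with the hypotheses — Proposition \ref{MapPrimes} assumes $\sigma_{m+2}$ satisfies $(\dagger)$ precisely so that $(\star_l)$-subgroups of $G_K^{m+1}$ (and $G_K^m$, via Proposition \ref{daggerldown}) can be handled — and that the residue characteristic of a prime of $K_i$ agrees with that of the prime of $K$ below it, so that reducing to $i=0$ is legitimate. Both are routine given the results already established. Once the reduction to $i=0$ is in place, the argument is just: "for each $l \neq \ch(\p)$ there is a $(\star_l)$-subgroup above $\p$ mapping to one above $\theta(\p)$, hence $\ch(\theta(\p)) \neq l$ for all such $l$, hence $\ch(\theta(\p)) = \ch(\p)$," and the commutative triangle in the statement follows immediately from Corollary \ref{MapPrimesNF}.
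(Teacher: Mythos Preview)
Your proposal is correct and follows essentially the same argument as the paper: for each prime $l \neq \ch(\p)$, use that a $(\star_l)$-subgroup above $\p$ is mapped to a $(\star_l)$-subgroup above $\theta(\p)$, which by Proposition \ref{starlisopen} forces $\ch(\theta(\p)) \neq l$, and then vary $l$ to conclude. The only cosmetic difference is that the paper works directly at level $i$ by lifting $\p$ and $\q$ to $K_{m+2}$ and $L_{m+2}$, whereas you first reduce to $i=0$ via Corollary \ref{MapPrimesNF}; both routes amount to the same thing.
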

\begin{proof}
Let $p \in \Primes_{K_i}$ and $\q=\theta_i(\p)$. Then, for some extension $\hat{\p}$ of $\p$ to $K_{m+2}$ and $\hat{\q}$ of $\q$ to $L_{m+2}$, we have that a subgroup satisfying condition $(\star_l)$ of $D_{\hat{\p}}$ is mapped to a subgroup satisfying condition $(\star_l)$ of $D_{\hat{\q}}$ by $\sigma_{m+2}$. Then, $\ch(\hat{\p})$ and $\ch(\hat{\q})$ are both different from $l$. Repeating this argument for all $l \neq \ch(\p)$ shows that indeed $\ch(\p)=\ch(\q)$.
\end{proof}

\begin{definition}\label{daggerplus}
For $m \ge 2$, let $\sigma_m: G_K^m \to G_L^m$ be a homomorphism of profinite groups satisfying condition $(\dagger)$ and such that there exists a map of primes $\theta: \Primes_K \to \Primes_L$ such that for all $\p \in \Primes_K$, $\ch(\theta(\p))=\ch(\p)$ and $\sigma_m(D_{\p}) \subseteq D_{\q}$. \\
Then, we will say that $\sigma_m$ satisfies condition $(\dagger^+)$, and that $\theta$ is the map of primes associated to $\sigma_m$.
\end{definition}

\begin{remark}\label{factormap}
Let $\sigma_m: G_K^m \to G_L^m$ be a homomorphism of profinite groups satisfying property $(\dagger^+)$, $\theta: \Primes_K \to \Primes_L$ be the map of primes associated to it and let $\widetilde{L}$ be the subfield of $L_m/L$ corresponding to $\sigma_m(G_K^m)$. \\
For $\p \in \Primes_K$, all the possible choices of $D_{\p}$ up to conjugation are mapped to conjugate subgroups in $G(L_m/\widetilde{L})=\sigma_m(G_K^m)$. Then, it follows that for some prime $\tilde{\q} \in \Primes_{\widetilde{L}}$, we may define a map $\tilde{\theta}(\p)=\tilde{\q}$. It then follows that we can construct $\tilde{\theta}: \Primes_K \to \Primes_{\widetilde{L}}$ as in Corollary \ref{MapPrL}, and $\tilde{\theta}$ is such that $\theta$ factors through $\tilde{\theta}$ as in the diagram \begin{center}
\begin{tikzcd}
& \Primes_{\widetilde{L}} \arrow[d, two heads] \\
\Primes_K \arrow[r,"\theta"] \arrow[ur,"\tilde{\theta}"] & \Primes_L 
\end{tikzcd}
\end{center}
where the vertical arrow is the natural restriction in the extension $\widetilde{L}/L$.
We will then say this map $\tilde{\theta}$ is the factor map of primes associated to $\sigma_m$.
\end{remark}

\begin{remark}\label{EndowedMap}
Let $m \ge 2$ be an integer, and let $\sigma_{m+1}: G_K^{m+1} \to G_L^{m+1}$ be a homomorphism of profinite groups satisfying condition $(\dagger)$. Then, the induced homomorphism $\sigma_m: G_K^m \to G_L^m$ satisfies condition $(\dagger^+)$, and the map $\theta: \Primes_K \to \Primes_L$ associated to it coincides with the map $\theta$ obtained in Proposition \ref{MapPrimesNF} starting with these assumptions.
\end{remark}

\section{Existence in the m-step Hom-Form}

Throughout this section, we take an integer $m \ge 2$ and assume we have a homomorphism of profinite groups $\sigma_{m+2}: G_K^{m+2} \to G_L^{m+2}$ that satisfies condition $(\dagger^+)$ (cf. Definition \ref{daggerplus}). Then, by definition, we have a map $\theta: \Primes_K \to \Primes_L$ that preserves the residue characteristic of primes. We want to study whether this map also induces relations on other invariants associated to primes, namely we'll look at the inertia degree and the norm. \\ We are going to need the following result, which we will apply to decomposition groups. 

\begin{lemma}\label{localroots}
Let $p$ be a prime number, and $k,k'$ be $p$-adic local fields. Let us assume we have a homomorphism of profinite groups $\alpha: G_k \to G_{k'}$ and that there exists a prime number $l \neq p$ such that $\alpha$ restricts to an injection on $l$-Sylow subgroups of $G_k$. \\ Then, $|\mu(k)(l)| \ge |\mu(k')(l)|$. In particular, $\mu_l \subset k'$ implies $\mu_l \subset k$.
\end{lemma}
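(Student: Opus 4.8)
The plan is to translate the hypothesis on $l$-Sylow subgroups into a statement about abelianizations, and then to read off the roots of unity from the maximal abelian (or, more efficiently, the maximal $l$-abelian) quotients of the two local Galois groups via local class field theory. First I would recall the local structure: for a $p$-adic local field $k$ with $p \ne l$, a pro-$l$ Sylow subgroup $S$ of $G_k$ surjects onto the pro-$l$ Sylow of the tame quotient $G_k^{\tr}$, and the latter is a quotient of the group $\langle x, y \mid x y x^{-1} = y^{q} \rangle$ with $q = N(\p_k)$ the residue cardinality; the number of $l$-power roots of unity in $k$ is governed precisely by the $l$-adic valuation of $q - 1$ (together with the $\Z_l$-freeness of the wild part). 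Concretely, $|\mu(k)(l)| = l^{v_l(q-1)}$ when $\mu_l \subset k$ is possible, and in general $\mu(k)(l) = \mu_{l^\infty} \cap k$ has order equal to the largest $l^a$ with $l^a \mid q-1$, unless $k$ contains no $\mu_l$ at all.

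Next I would use the hypothesis: $\alpha$ restricts to an injection on an $l$-Sylow subgroup $S$ of $G_k$, with image inside some $l$-Sylow subgroup $S'$ of $G_{k'}$ (after conjugating; this is where one invokes that the image of a pro-$l$ subgroup lands in a conjugate of any chosen Sylow). So $S \hookrightarrow S'$, hence there is an induced map $S^{\ab} \to S'^{\ab}$, but more usefully $S$ being isomorphic to its image means $S' $ contains a closed subgroup isomorphic to $S$. The key numerical input is that $S$ (resp. $S'$) is $l$-decomposition-like: it sits in $1 \to \Z_l \to S \to \Z_l \to 1$ with the conjugation action of the quotient generator on $\Z_l$ being multiplication by $q$ (resp. $q'$). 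The torsion of $S^{\ab}$ is then $\Z_l / (q-1)\Z_l$, whose order is $|\mu(k)(l)|$ — this is exactly the cyclotomic character read mod the commutator. An injection $S \hookrightarrow S'$ of such groups, with $S'$ not much bigger (both have $\cd_l = 2$, so the image is open, i.e. finite index), forces $q' - 1$ to divide $q-1$ in $\Z_l$, equivalently $v_l(q'-1) \le v_l(q-1)$, which gives $|\mu(k')(l)| \le |\mu(k)(l)|$. The case distinction for when $\mu_l \not\subset k$ (so $|\mu(k)(l)| = 1$) is handled by the same computation since then $l \nmid q-1$, forcing $l \nmid q' - 1$ and hence $\mu_l \not\subset k'$; this yields the "in particular" clause by contraposition.

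I expect the main obstacle to be making precise the claim that an injection of $l$-decomposition-like groups constrains the residue cardinalities as stated, i.e. that $S \hookrightarrow S'$ with open image implies the conjugation-eigenvalue $q'$ of $S'$ divides (a power-of-$l$-related version of) $q$. One has to be careful that $S$ need not be normal in $S'$ and that the abelianization functor is not exact; the cleanest route is probably to pass to the maximal abelian quotient of an open subgroup of $S'$ through which the image of $S$ factors appropriately, or to argue directly with the explicit presentation $\langle x,y \mid xyx^{-1} = y^{q}\rangle$ and track what an open embedding does to the "wild" generator $y$ and the eigenvalue. Alternatively, and perhaps most robustly, one appeals to local class field theory: $k^{\times}$'s $l$-part is $\mu(k)(l) \times \Z_l^{[k:\Q_p] + \delta}$ and the injection $\alpha$ induces a norm-type map between $l$-completions of the multiplicative groups whose torsion behaves monotonically; I would phrase the final argument in whichever of these two languages (explicit presentation vs. CFT) keeps the bookkeeping lightest, and I anticipate spending most of the proof's length pinning down the conjugation/Sylow-containment step rather than the cyclotomic computation itself.
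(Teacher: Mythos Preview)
Your approach contains a genuine gap. The formula you rely on---that for $S$ an $l$-Sylow subgroup of $G_k$ the torsion of $S^{\ab}$ is $\Z_l/(q-1)\Z_l$ and hence has order $|\mu(k)(l)|$---fails when $\mu_l \not\subset k$. Since $[k(\mu_l):k]$ divides $l-1$ and is prime to $l$, any $l$-Sylow of $G_k$ is simultaneously an $l$-Sylow of $G_{k(\mu_l)}$; thus $S^{\ab}$ can only detect $|\mu(k(\mu_l))(l)|$, which is always at least $l$, not $|\mu(k)(l)|$. (Equivalently: the generator of the quotient $\Z_l$ in the extension $1 \to \Z_l \to S \to \Z_l \to 1$ is not a lift of Frobenius itself but of its projection to the $\Z_l$-factor of $\widehat{\Z}$, and the resulting conjugation eigenvalue always lies in $1+l\Z_l$ regardless of $q \bmod l$.) Your Sylow comparison therefore yields only $|\mu(k(\mu_l))(l)| \ge |\mu(k'(\mu_l))(l)|$, which does not give $|\mu(k)(l)| \ge |\mu(k')(l)|$; in particular it cannot establish the implication $\mu_l \subset k' \Rightarrow \mu_l \subset k$, which is precisely the nontrivial case.

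The paper's proof avoids this by working with the maximal pro-$l$ \emph{quotient} $G_k(l)$ rather than the $l$-Sylow: the torsion of $G_k(l)^{\ab}$ genuinely equals $\mu(k)(l)$. But one cannot simply restrict $\alpha$ to compare these quotients; instead the paper uses the full homomorphism $G_k \to G_{k'}$ to produce, for a suitable finite extension $F/k'$, a surjection $G_k(l) \twoheadrightarrow G_F(l)$, and then invokes the Sylow-injectivity hypothesis to show this surjection is an isomorphism (forcing $\cd_l G_k(l) = 2$ and hence $\mu_l \subset k$), after which $\mu(k)(l) \isom \mu(F)(l) \supseteq \mu(k')(l)$. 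Your alternative CFT sketch runs into the same obstacle: the induced map $G_k^{\ab} \to G_{k'}^{\ab}$ need not be injective on $l$-parts, so the claim that ``torsion behaves monotonically'' is exactly the assertion that requires the intermediate-field construction.
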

\begin{proof}
First, assume that $\mu_l \not\subset k'$. Then, the statement is trivially true. We may then assume without loss of generality that $k'$ does indeed contain the $l$-th roots of unity. In this case, the maximal pro-$l$-quotient $G_{k'}(l)$ of $G_{k'}$ is an extension of $\Z_l$ by $\Z_l$ (cf. Proposition 7.5.9, \cite{NWS}). \\
Let $G_{k,l}$ be an $l$-Sylow subgroup of $G_k$, $H=\alpha(G_k)$, and let $H_l$ be the $l$-Sylow subgroup of $H$ such that $\alpha(G_{k,l}) = H_l$. By our assumption, $cd\,H_l = 2$. In particular, $H_l$ is open in an $l$-Sylow subgroup $G_{k',l}$ of $G_{k'}$. If we consider the quotient $G_{k'} \surj G_{k'}(l)$, we observe that $G_{k',l}$ must map surjectively to $G_{k'}(l)$, and isomorphically by Proposition \ref{zlinj}. Then, the image of $H_l$ is an open subgroup $U$ of $G_{k'}(l)$. We may then take the inverse image of $U$ by the quotient $G_{k'} \to G_{k'}(l)$ to get an open subgroup $G_F$ of $G_{k'}$ such that $H_l$ is an $l$-Sylow subgroup of $G_F$. Let $F$ be the finite extension of $k'$ corresponding to this subgroup. By construction, we also have $H \subseteq G_F$. \\ We may then take the composite $G_{k} \to G_{F} \surj G_F(l)$, and observe that since the maps $G_{k,l} \surj G_{F,l} \surj G_F(l)$ are surjective, this composite is also surjective. Then, it must factor through $G_k(l)$, and we have a commutative diagram
\begin{center}
\begin{tikzcd}
G_{k,l} \arrow[r,two heads] \arrow[d,two heads] & G_{F,l} \arrow[d,two heads] \\
G_k(l) \arrow[r,two heads] & G_F(l) 
\end{tikzcd}
\end{center}
which shows that $G_k(l)$ is of cohomological dimension $2$, and so $\mu_l \subset k$, and all the arrows in the diagram are isomorphisms. We may then also naturally induce an isomorphism $G_k(l)^{\ab} \isomto G_F(l)^{\ab}$. We may then conclude by (\cite{NWS}, Proposition 7.5.9), that $\mu(k)(l)$ is isomorphic $\mu(F)(l)$ and the statement then follows immediately, as $F$ is an extension of $k'$. 
\end{proof}

\begin{corollary}\label{localroots2}
Let $m \ge 2$ be an integer. In the previous statement, replace $G_k$ and $G_{k'}$ with their maximal $m$-step solvable quotients $G_k^m$ and $G_{k'}^m$. Then, the statement still holds.
\end{corollary}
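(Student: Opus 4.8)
The plan is to carry over the proof of Lemma \ref{localroots} almost verbatim, with $G_k$, $G_{k'}$ replaced by $G_k^m$, $G_{k'}^m$. The only point that needs a separate remark is that, because $l\neq p$, all the pro-$l$ data entering that proof is already ``metabelian'' over a $p$-adic local field, so passing to $m$-step solvable quotients (for $m\ge 2$) does not disturb it.

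First I would record two facts for an arbitrary $p$-adic local field $E$ and a prime $l\neq p$. \emph{(i)} The maximal pro-$l$ quotient $G_E(l)$ is metabelian: it is $\Z_l$ if $\mu_l\not\subset E$, and otherwise it is a non-abelian — hence $l$-decomposition-like — extension of $\Z_l$ by $\Z_l$ with $\cd(G_E(l))=2$ (cf. \cite{NWS}, Proposition 7.5.9). In either case it is $2$-step solvable, so $G_E\surj G_E(l)$ factors through $G_E^m$ for every $m\ge 2$, and the induced surjection $(G_E^m)(l)\surj G_E(l)$ is an isomorphism (it is mutually inverse to $G_E(l)\surj (G_E^m)(l)$, by the usual universal-property argument). \emph{(ii)} The projection $G_E\surj G_E^m$ carries an $l$-Sylow subgroup of $G_E$ isomorphically onto an $l$-Sylow subgroup of $G_E^m$. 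Indeed the maximal tamely ramified quotient $G_E^{\tr}$ is also metabelian (its commutator subgroup lies in the abelian tame inertia), hence a quotient of $G_E^m$ for $m\ge 2$; since the wild inertia of $G_E$ is pro-$p$, an $l$-Sylow subgroup $G_{E,l}$ of $G_E$ maps isomorphically onto an $l$-Sylow subgroup of $G_E^{\tr}$, so the composite $G_{E,l}\surj (l\text{-Sylow of }G_E^m)\surj (l\text{-Sylow of }G_E^{\tr})$ is an isomorphism, which forces its first arrow to be one. Consequently an $l$-Sylow subgroup of $G_E^m$ is, like $G_{E,l}$, a non-abelian (hence $l$-decomposition-like) extension of $\Z_l$ by $\Z_l$ with $\cd=2$, and when $\mu_l\subset E$ it maps isomorphically onto $G_E(l)$ by Lemma \ref{zlinj}.

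With \emph{(i)} and \emph{(ii)} the argument of Lemma \ref{localroots} transfers directly. We may assume $\mu_l\subset k'$, otherwise there is nothing to prove. Let $P$ be an $l$-Sylow subgroup of $G_k^m$; by \emph{(ii)} it is $l$-decomposition-like with $\cd(P)=2$, and since $\alpha$ is injective on $P$, $H_l:=\alpha(P)\cong P$ is $l$-decomposition-like, hence an open subgroup of some $l$-Sylow subgroup of $G_{k'}^m$. Transporting through the isomorphism of \emph{(ii)} for $k'$, $H_l$ corresponds to an open subgroup $U$ of $G_{k'}(l)$, and $U=G_F(l)$ for the finite subextension $F$ of $k'(l)/k'$ it cuts out — here one uses $F(l)=k'(l)$, which holds because $F(l)/k'$, being an extension of the pro-$l$ group $F(l)/F$ by the pro-$l$ group $F/k'$, is pro-$l$. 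The composite $G_k^m\xrightarrow{\alpha}G_{k'}^m\surj G_{k'}(l)$ factors through $G_k(l)=(G_k^m)(l)$, giving $\bar\alpha: G_k(l)\to G_{k'}(l)$ whose image is $U=G_F(l)$ (because $P$ surjects onto $(G_k^m)(l)$ and $\alpha(P)=H_l$ maps onto $U$). Since $G_k(l)$ is either $\Z_l$ or an extension of $\Z_l$ by $\Z_l$, and it surjects onto the non-abelian group $G_F(l)$, it must be the non-abelian case — so $\mu_l\subset k$ and $G_k(l)$ is $l$-decomposition-like — and Lemma \ref{zlinj} promotes $\bar\alpha$ to an isomorphism $G_k(l)\isomto G_F(l)$. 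Abelianising gives $G_k(l)^{\ab}\isom G_F(l)^{\ab}$, whence $\mu(k)(l)\isom\mu(F)(l)$ by (\cite{NWS}, Proposition 7.5.9); as $k'\subseteq F$ this yields $|\mu(k)(l)|=|\mu(F)(l)|\ge|\mu(k')(l)|$, and in particular $\mu_l\subset k'\Rightarrow\mu_l\subset k$.

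The only step that does more than translate Lemma \ref{localroots} is fact \emph{(ii)}, and that is the one I would expect to be the main obstacle: one must be sure that truncating at the $m$-th solvable step cannot collapse an $l$-Sylow subgroup (for instance kill its inertia part). Pushing into the metabelian quotient $G_E^{\tr}$, which is already a quotient of $G_E^m$ and in which the $l$-Sylow of $G_E$ is faithfully visible, settles this cleanly.
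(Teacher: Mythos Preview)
Your proof is correct and follows essentially the same route as the paper. The paper's proof observes that $\ker(G_k\surj G_k^m)$ is pro-$p$ (citing \cite{S-T}, Proposition~1.1(vii)) and deduces $G_k(l)\cong G_k^m(l)$, then says the argument of Lemma~\ref{localroots} goes through verbatim; you instead give a self-contained justification of the same two facts---your (i) and (ii)---by noting that both $G_E(l)$ and $G_E^{\tr}$ are already metabelian, and then you spell out the analogue of Lemma~\ref{localroots} explicitly rather than leaving it to the reader. The content is the same; the difference is only that you prove inline what the paper imports from \cite{S-T}, and you carry the construction of $F$ inside $k'(l)$ rather than inside $k'_m$ (which is harmless since $k'(l)\subset k'_m$).
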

\begin{proof}
Since by our assumptions $m \ge 2$, we have that $L = \ker(G_{k} \to G_{k}^m)$ is a pro-$p$-group  (cf. \cite{S-T}, Proposition $1.1.$vii). Furthermore, if we consider the maximal pro-$l$-quotient $G_k^m(l)$ of $G_k^m$, the quotient map $G_k \surj G_k^{m}(l)$ must factor through $G_k(l)$ by maximality, and the map $G_k(l) \to G_k^m(l)$ has naturally pro-$l$ kernel. Then, for appropriate normal subgroups $V$ and $W$ of (respectively) $G_k$ and $G_k^m$, we have two exact sequences which fit in the following diagram:
\begin{center}\begin{tikzcd}
1 \arrow[r] & V \arrow[r] \arrow[d, two heads] & G_k \arrow[r, two heads] \arrow[d, two heads] & G_k(l) \arrow[d] \arrow[r] & 1 \\
1 \arrow[r] & W \arrow[r] & G_k^m \arrow[r, two heads] & G_k^m(l) \arrow[r] & 1
\end{tikzcd}\end{center}
However, $L$ is pro-$p$, and it must map surjectively to the kernel of $G_k(l) \to G_k^m(l)$, which is pro-$l$. The latter is then trivial. A proof analogous to the one of Lemma \ref{localroots} allows us to conclude that we have an isomorphism $G_k^m(l) \isomto G_F^m(l)$ for some finite subextension $F$ of $k'_m/k'$, and the statement follows as desired.
\end{proof}

\begin{proposition}\label{NormMap}
Let $m \ge 2$ be an integer and $\sigma_{m}: G_K^{m} \to G_L^{m}$ be a homomorphism of profinite groups satisfying condition $(\dagger^+)$ (see Definition \ref{daggerplus}). Let $\widetilde{L}$ be the subextension of $L_m/L$ corresponding to $\sigma_m(G_K^m)$. \\ Consider the map of primes $\theta; \Primes_K \to \Primes_L$ and the factor map of primes $\tilde{\theta}: \Primes_K \to \Primes_{\widetilde{L}}$ (see Remark \ref{factormap}) associated to $\sigma_m$. \\
Let $\p \in \Primes_K$ and $\q = \theta(\p)$. Then, $N \p \ge N \q$. \\
More generally, let $\tilde{\q} = \tilde{\theta}(\p)$, and $L'$ a finite subextension of $\widetilde{L}/L$. Then, for the prime $\q' = \tilde{\q} \cap L'$, we have $N(\p) \ge N(\q')$
\end{proposition}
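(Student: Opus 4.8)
The plan is to extract, from the inclusion $\sigma_m(D_{\p}) \subseteq D_{\q}$ of decomposition groups, a homomorphism of local Galois groups (in their $m$-step solvable incarnations) to which we can apply Corollary \ref{localroots2}, and then to translate the resulting divisibility on roots of unity into the norm inequality via the standard description of $\mu_l \subset k$ for $p$-adic local fields $k$. Concretely, write $p = \ch(\p) = \ch(\q)$. The decomposition group $D_{\p} \subset G_K^m$ is a quotient of the local absolute Galois group $G_{K_{\p}}$, and in fact (by \cite{S-T}, Proposition 1.1) it is the maximal $m$-step solvable quotient $G_{K_{\p}}^m$ when $m \ge 2$, and similarly $D_{\q} = G_{L_{\q}}^m$; so $\sigma_m$ restricts to a homomorphism $\alpha \colon G_{K_{\p}}^m \to G_{L_{\q}}^m$. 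Because $\sigma_m$ satisfies $(\dagger^+)$, hence $(\dagger)$, hence $(\dagger_l)$ for every prime $l \neq p$, the map $\alpha$ restricts to an injection on $l$-Sylow subgroups of $D_{\p}$ for all $l \neq p$ (cf. Remark \ref{groupdaggerl} and Proposition \ref{starllift}). Corollary \ref{localroots2} then gives $|\mu(K_{\p})(l)| \ge |\mu(L_{\q})(l)|$ for every $l \neq p$.

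First I would record the elementary number-theoretic fact I need: for a $p$-adic local field $k$ with residue field of size $q$, and any prime $l \neq p$, one has $\mu_l \subset k$ iff $l \mid q - 1$ (the prime-to-$p$ roots of unity of $k$ inject into $\bar{\F}_q^\times$ and exhaust $\F_q^\times$ on the unramified part; more precisely $\mu(k)^{(p')} \cong \mu(\kappa)$ for the residue field $\kappa$). So the collection of data $\{|\mu(k)(l)|\}_{l \neq p}$ is equivalent to knowing $N(\mathfrak{p}_k) = q$ up to its $p$-part — and since residue fields of $p$-adic local fields have $q = p^f$, the $p$-part is automatically a pure power of $p$. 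From $|\mu(K_{\p})(l)| \ge |\mu(L_{\q})(l)|$ for all $l \neq p$ we get that every prime power $l^a \mid N(\q) - 1$ (with $l \neq p$) also divides $N(\p) - 1$; combined with $N(\p), N(\q)$ both being powers of $p$, this forces $f_{\q} \mid f_{\p}$ in the appropriate sense, hence $N(\q) = p^{f_{\q}} \le p^{f_{\p}} = N(\p)$. This handles the first assertion $N\p \ge N\q$.

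For the more general statement, the point is that the composite $G_K^m \xrightarrow{\sigma_m} \sigma_m(G_K^m) = G(L_m/\widetilde{L})$ is surjective onto $G(L_m/\widetilde{L})$, and everything above applies verbatim with $L$ replaced by $\widetilde{L}$: the prime $\tilde{\q} = \tilde{\theta}(\p) \in \Primes_{\widetilde{L}}$ has $\sigma_m(D_{\p}) = D_{\tilde{\q}}$ inside $G(L_m/\widetilde{L})$ (here $D_{\tilde{\q}}$ is the decomposition group at $\tilde{\q}$ in $G(L_m/\widetilde{L})$, via Remark \ref{factormap}), and $\tilde{\q}$ has residue characteristic $p$ by Proposition \ref{MapResChar}. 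Applying the local argument to $\alpha \colon G_{K_{\p}}^m \to G_{\widetilde{L}_{\tilde{\q}}}^m$ gives $N(\p) \ge N(\tilde{\q})$ (where the norm of $\tilde{\q}$ is computed relative to $\widetilde{L}$, so as an absolute residue-field size this is $p^{f_{\tilde{\q}/\widetilde{L}}}$ — but note $\widetilde{L}/L$ may be infinite, so one should phrase $N(\tilde{\q})$ as the size of the residue field of $\widetilde{L}_{\tilde{\q}}$, equivalently the supremum over finite subextensions). Finally, for a finite subextension $L'/L$ of $\widetilde{L}/L$ and $\q' = \tilde{\q} \cap L'$, the residue field of $L'_{\q'}$ is contained in that of $\widetilde{L}_{\tilde{\q}}$, so $N(\q') \le N(\tilde{\q}) \le N(\p)$.

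The main obstacle I anticipate is the careful bookkeeping around $\widetilde{L}$ when $\widetilde{L}/L$ is infinite: one must make sense of $N(\tilde{\q})$, of the "local field" $\widetilde{L}_{\tilde{\q}}$ (an algebraic, possibly infinite, extension of $\Q_p$), and check that Corollary \ref{localroots2} — stated for $p$-adic local fields, i.e. finite extensions of $\Q_p$ — still yields what is needed, either by passing to a suitable finite subextension first (the image of a finitely generated piece, or just $L'$ itself) or by a direct limit argument. The cleanest route is probably to apply the local result at the finite level $L'$ directly: restrict $\sigma_m$ to $\sigma_m^{-1}(G(L_m/L'))$, obtain a homomorphism $G_{K_{\p}}^m \to G_{L'_{\q'}}^m$ satisfying the $(\dagger_l)$-injectivity on $l$-Sylows for $l \neq p$, and conclude $N(\p) \ge N(\q')$ directly from Corollary \ref{localroots2} plus the residue-field computation — avoiding the infinite extension altogether. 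The remaining minor point to verify is that the hypothesis of Corollary \ref{localroots2} (injectivity on $l$-Sylow subgroups of the \emph{local} $m$-step solvable group) is exactly what $(\dagger_l)$ provides on decomposition groups, which is Remark \ref{groupdaggerl} together with the observation that $l$-Sylow subgroups of $D_{\p}$ are, up to $l$-openness, the $(\star_l)$-subgroups lying over $\p$.
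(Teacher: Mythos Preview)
Your argument for the first inequality $N\p \ge N\q$ is correct and matches the paper's: apply Corollary~\ref{localroots2} to $\sigma_m|_{D_\p}\colon D_\p \to D_\q$, using that $(\dagger)$ gives injectivity on $l$-Sylows for every $l \neq p$, then translate the inclusion $\mu(L_\q)^{(p')} \subseteq \mu(K_\p)^{(p')}$ into the norm inequality. One caveat: $D_\p$ is not literally $G_{K_\p}^m$ as you assert; what \cite{S-T}, Proposition~1.1 gives is that $G_{K_\p} \twoheadrightarrow D_\p$ has pro-$p$ kernel, which suffices since the proof of Corollary~\ref{localroots2} only uses the maximal pro-$l$ quotients for $l \neq p$.

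For the more general assertion there is a real gap in your ``cleanest route''. Restricting $\sigma_m$ to land in $G(L_m/L')$ yields a map $D_\p \to D_{\q'}$, where $D_{\q'} = D_\q \cap G(L_m/L')$ is the decomposition group at $\q'$ \emph{in $G(L_m/L')$}. But $G(L_m/L')$ is not $G_{L'}^m$ (since $L'_m$ may strictly contain $L_m$), so $D_{\q'}$ is not of the form $G_{k'}^m$ for $k' = L'_{\q'}$, and Corollary~\ref{localroots2} does not directly apply. What you actually need is that $\mathcal{G}_{\q'} := G_{L'_{\q'}} \twoheadrightarrow D_{\q'}$ has pro-$p$ kernel, so that $D_{\q'}(l) \cong G_{L'_{\q'}}(l)$ for each $l \neq p$; then the argument of Lemma~\ref{localroots} applies to $D_\p \to D_{\q'}$ and gives $\mu(K_\p)(l) \supseteq \mu(L'_{\q'})(l)$.

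The paper supplies this missing step by a sandwiching argument: for $l \neq p$, an $l$-Sylow of $D_{\q'}$ contains $\sigma_m(D_{\p,l})$ (which is $l$-decomposition-like by $(\dagger_l)$, since $\sigma_m(D_\p) \subseteq D_{\q'}$) and is contained in an $l$-Sylow of $D_\q$ (also $l$-decomposition-like); by Lemma~\ref{zlinj} it therefore has $\cd_l = 2$, so the $l$-Sylow of $\mathcal{G}_{\q'}$ maps isomorphically onto it and the kernel $V$ is pro-$p$. A shorter alternative you might have noticed: since $D_{\q'}$ is an open subgroup of $D_\q$ and $L'_{\q'}$ is the corresponding finite extension of $L_\q$, the kernels of $G_{L'_{\q'}} \to D_{\q'}$ and of $G_{L_\q} \to D_\q$ coincide, so the pro-$p$-ness already used implicitly in the first part carries over directly.
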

\begin{proof}
Let us denote $p = \ch(\p)$ ($= \ch(\q)$ by Definition \ref{daggerplus}). We may take $D_{\p} \subset G_K^m$ a chosen decomposition group (unique up to conjugation) at $\p$ in $K_m/K$, and let $D_{\q} \subset G_L^m$ be a decomposition group at $\q$ such that $\sigma_m(D_{\p}) \subseteq D_{\q}$ (cf. Definition \ref{daggerplus} again). For any $l \neq p$, by our assumptions, the restriction of $\sigma_m$ to any $l$-Sylow subgroup of $D_{\p}$ gives an injective map. Then, we may apply Corollary \ref{localroots2}, and obtain that $\mu(K_{\p})(l) \supseteq \mu(L_{\q})(l)$. \\
Recall that $N(\p) = p^{f_{\p}}$ is equal to $1$ plus the cardinality of the prime-to-$p$ torsion in $D_{\p}^{\ab}$, which corresponds to the group of prime-to-$p$-power roots of unity in contained in the localisation $K_{\p}$. An iteration of the above argument for all $l \neq p$ shows that $\mu(L_{\q})^{(p')}$ is then a subgroup of $\mu(K_{\p})^{(p')}$, which also implies $N(\p) \ge N(\q)$ as desired. \\
We now need to prove the second part of the statement. Let $L'$ be any fixed finite subextension of $\widetilde{L}/L$, and let us take an extension $\q'$ of $\q$ to $L'$ determined by $D_{\q} \cap G(L_m/L')$. We will denote this intersection by $D_{\q'}$. \\
We naturally have a quotient $\mathcal{G}_{\q'} := G_{L'_{\q'}} \surj D_{\q'}$. Our goal is to see that, analogously to $G_{k'}^m$ in the proof of Corollary \ref{localroots}, $D_{\q'}(l) \isom \mathcal{G}_{q'}(l)$. \\
Let us first consider the exact sequence \[ 1 \to V \to \mathcal{G}_{\q'} \to D_{\q'} \to  1 \] where $V$ is the appropriate normal subgroup of $\mathcal{G}_{\q'}$.
If we choose $l \neq p$ a prime number, we naturally have an $l$-Sylow subgroup of $\G_{\q'}$ must map surjectively onto an $l$-Sylow subgroup of $D_{\q'}$. However, we know that by our assumptions $\cd_l D_{\q'} = 2$ as an $l$-Sylow subgroup of $D_{\q'}$ must be a subgroup of an $l$-Sylow subgroup of $D_{\q}$ (which is of $\cd_l$ $2$) and contains the image by $\sigma$ of an $l$-Sylow subgroup $D_{\p,l}$ of $D_{\p}$ (which is also of $\cd_l$ $2$). It then follows from Lemma \ref{zlinj} that an $l$-Sylow of $\G_{\q'}$ is mapped isomorphically by the quotient map $\mathcal{G}_{\q'} \surj D_{\q'}$ to an $l$-Sylow of $D_{\q'}$. It then also follows that an $l$-Sylow subgroup $V_l$ of $V$ is trivial, and repeating this argument for all $l \neq p$ shows that $V$ is a pro-$p$-group. \\
Observe that $\ker(\G_{\q'} \to \G_{\q'}^{\tr})$ is the maximal pro-$p$ normal subgroup of $\G_{\q'}$ (\cite{NWS}, Corollary 7.5.7), which means that we have a natural surjective map $D_{\q'} \surj \G_{\q'}^{\tr}$. Since the maximal pro-$l$ extension $L'_{\q'}(l)/L'_{\q'}$ is tamely ramified, it follows that $\G_{\q'}(l)$ is also a quotient of $D_{\q'}$, and maximality then shows $D_{\q'}(l) \isom \G_{\q'}(l)$. \\	
We may now proceed with an argument analogous to the one in the proof of Lemma \ref{localroots} to show that $\mu(K_{\p})(l) \supseteq \mu(L'_{\q'})(l)$. We then conclude $N(\p) \ge N(\q')$, and since $L'$ was chosen arbitrarily as a subextension of $\widetilde{L}/L$, the second part of the statement follows. \end{proof}

\begin{proposition}\label{cofiniteprimes}
With the same assumptions and notations as in Proposition \ref{NormMap}, only finitely many primes of $L$ are not in the image of $\theta$, and $[K:\Q] \ge [L:\Q]$.
\end{proposition}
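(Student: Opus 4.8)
The plan is to combine the arithmetic properties of $\theta$ already established with an analysis, rational prime by rational prime, of how $\theta$ matches up the prime decompositions of $K$ and $L$. Write $n_K=[K:\Q]$ and $n_L=[L:\Q]$. First I would collect the combinatorial input. By Proposition \ref{MapResChar} (case $i=0$), $\theta$ is compatible with the residue-characteristic maps $\Primes_K\surj\Primes_\Q$ and $\Primes_L\surj\Primes_\Q$, so $\ch(\theta(\p))=\ch(\p)$ for every $\p$, and $\theta$ restricts, for each rational prime $p$, to a map $\theta_p\colon\{\p\in\Primes_K:\ch(\p)=p\}\to\{\q\in\Primes_L:\ch(\q)=p\}$. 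By Proposition \ref{NormMap}, $N(\p)\ge N(\theta(\p))$; since both norms are powers of $\ch(\p)$ this forces $f_{\theta(\p)}\le f_\p$, so in particular a residue-degree-one prime of $K$ is carried to a residue-degree-one prime of $L$ of the same norm. Finally, since $\theta^{-1}(\q)$ is contained in the set of primes of $K$ above $\ch(\q)$, whose residue degrees sum to at most $n_K$, every fibre of $\theta$ is finite of cardinality $\le n_K$.

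The key step is to show that $\theta_p$ is surjective for all but finitely many rational primes $p$ --- say for those unramified in both $K$ and $L$. Fix such a $p$, a prime $\q$ of $L$ above $p$ with decomposition group $D_\q\subseteq G_L^{m}$, and a prime number $l\neq p$. By Proposition \ref{starlisopen} there is, inside an $l$-Sylow subgroup of $D_\q$, a subgroup $F'$ satisfying property $(\star_l)$. Using hypothesis $(\dagger^+)$ together with the description of $\theta$ through the maps $\theta^{(l')}$ and the bijections $\phi_{m,l,K},\phi_{m,l,L}$ of Proposition \ref{STMapping} (cf. Remark \ref{lMapPrimes}) and the classification of images of decomposition-like subgroups recalled before Lemma \ref{zlinj}, one produces a $(\star_l)$-subgroup $F$ of $G_K^{m}$ with $\sigma_m(F)\approx F'$; unwinding the construction, the prime of $L_{m-1}$ attached to the class of $F'$ is then the image under $\theta^{(l')}$ of the prime attached to $F$, and pushing down to residue characteristics this exhibits $\q$ as $\theta_p(\p)$ for a suitable $\p$. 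Granting this, $\Primes_L\setminus\theta(\Primes_K)$ is contained in the set of primes of $L$ lying over the finitely many rational primes ramified in $K$ or in $L$, and is therefore finite. I expect this surjectivity statement to be the \emph{crux} of the proof: it is precisely the point where condition $(\dagger^+)$ --- and not merely the norm inequality $N(\p)\ge N(\theta(\p))$ --- is needed, and it is the $m$-step analogue of the corresponding step in Uchida's treatment in \cite{UchidaHom}. (As a sanity check, a short Dirichlet-series computation already yields a positive lower bound: since the degree-one primes have density $1$, $\sum_{\p,\,f_\p=1}\ch(\p)^{-s}\sim\log\frac1{s-1}$, and $\theta$ carries these norm-preservingly with fibres of size $\le n_K$, so $\delta_L\big(\theta(\Primes_K)\big)\ge 1/n_K>0$; the real work is upgrading positive density to cofiniteness.)

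Finally, the degree inequality drops out of the fibrewise picture once cofiniteness is known. Pick a rational prime $p$, unramified in $K$ and in $L$, for which $\theta_p$ is surjective. For each prime $\q$ of $L$ above $p$, choose a prime $\p_\q$ of $K$ above $p$ with $\theta(\p_\q)=\q$; then $\q\mapsto\p_\q$ is injective and $f_{\p_\q}\ge f_\q$ by the first paragraph, whence
\[
[L:\Q]=\sum_{\ch(\q)=p}f_\q\ \le\ \sum_{\ch(\q)=p}f_{\p_\q}\ \le\ \sum_{\ch(\p)=p}f_\p=[K:\Q],
\]
which is the remaining assertion.
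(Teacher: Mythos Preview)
Your final paragraph deriving $[L:\Q]\le[K:\Q]$ from cofiniteness is fine and matches the paper's argument. The problem is the ``key step'' in your second paragraph: you assert that, given a $(\star_l)$-subgroup $F'\subseteq D_\q$ in $G_L^m$, one can produce a $(\star_l)$-subgroup $F\subseteq G_K^m$ with $\sigma_m(F)\approx F'$. Nothing in the hypotheses gives you this. Condition $(\dagger)$ (and hence $(\dagger^+)$) is a purely \emph{forward} condition: it says that $\sigma_m$ carries $(\star_l)$-subgroups of $G_K^m$ to $(\star_l)$-subgroups of $G_L^m$. It does not assert --- and there is no reason to expect --- that the induced map $\sigma^*_{m,l}\colon\D_{m,l,K}\to\D_{m,l,L}$ is surjective. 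The bijections $\phi_{m,l,K}$ and $\phi_{m,l,L}$ of Proposition~\ref{STMapping} identify the source and target with sets of primes, but say nothing about the image of $\sigma^*_{m,l}$ inside $\D_{m,l,L}$; the classification before Lemma~\ref{zlinj} describes quotients of $l$-decomposition-like groups, not preimages. What you have written is essentially a restatement of the conclusion you want, and the phrase ``one produces'' hides exactly the content of the proposition.

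The paper's proof is entirely different and genuinely indirect. Assume for contradiction that infinitely many primes $\q_0,\q_1,\ldots$ of $L$ lie outside $\theta(\Primes_K)$; after passing to a single ideal class, set $\alpha_i$ a generator of $\q_0\q_i^{-1}$ and form the infinite abelian extension $L'=L(\sqrt{\alpha_1},\sqrt{\alpha_2},\ldots)$, ramified only at the $\q_i$ and above $2$. The corresponding extension $K'/K$ can then ramify only above $2$ (since the $\q_i$ are not hit by $\theta$), and Hermite--Minkowski forces $K'/K$ to be finite. This pins down an infinite abelian subextension $L''\subseteq\widetilde{L}$ of $L$. Now the density computation you sketched as a ``sanity check'' becomes the actual engine: degree-one primes of $K$ of odd residue characteristic map, norm-preservingly and with bounded fibres, to primes of $L$ that must split completely in every finite layer of $L''/L$, and this contradicts Chebotarev once $[L'':L]$ is large enough. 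In short, cofiniteness is obtained not by lifting $(\star_l)$-subgroups but by a class-field-theoretic construction that converts ``missed primes'' into ramification constraints on the $K$-side.
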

\begin{proof}
Assume by contradiction there are infinitely many primes of $L$ that are not in the image of $\theta$. Since $L$ has a finite number of ideal classes, there must be an ideal class of $L$ containing infinitely many of these primes, so let us denote them by $\q_0,\q_1,\q_2,...$. Furthermore, for all $i \ge 1$ the ideal $\q_0/\q_i$ is principal, and so is generated by some element $\alpha_i$. \\
Then, we may consider the infinite abelian extension $L'=L(\sqrt{\alpha_1},\sqrt{\alpha_2},...)$ of $L$, where only the primes above $2$ and the $\q_i$ may ramify, and the abelian extension $K'$ of $K$ corresponding to it by $\sigma_m$. Observe that if a finite prime of $K$ ramifies in $K'/K$, its image by $\theta$ must also ramify in $L'/L$. As the $\q_i$ are by definition not in the image of $\theta$, by Proposition \ref{MapResChar} the only primes of $K$ that may ramify in $K'/K$ are primes with residue characteristic $2$. As we have an injective map $G(K'/K) \inj G(L'/L)$ by Remark \ref{quotients}, and $L'/L$ is obtained as a composite of (infinitely many) $\Z/2\Z$-extensions of $L$, $K'$ is obtained as a composite of $\Z/2\Z$-extensions of $K$ too. However, as the only finite primes that may ramify in $K'/K$ are the primes above $2$, and there are only finitely many quadratic extensions of $K$ that satisfy this property (cf. \cite{NeukirchANT}, Theorem 2.16), it follows that $K'/K$ needs to be a finite extension. \\
As $G(K'/K) \isom G(\widetilde{L} L'/\widetilde{L})$, it follows that $\widetilde{L} L'/\widetilde{L}$ is a finite extension as well. We may then consider the field $L'' = L' \cap \widetilde{L}$, and $G(L'/L'') \isom G( \widetilde{L} L'/ \widetilde{L})$, so $L''$ corresponds to a finite subgroup of $G(L'/L)$. In particular the abelian extension $L'' / L$ is infinite. \\
Let $\p \in \Primes_K$ be a prime such that $\ch(\p)$ is odd and $f_\p = 1$. Then $\q=\theta(\p)$ is unramified in $L'/L$, and by \ref{NormMap} $\ch(\p) = N \p = N \q$. This also holds if we replace $\q$ with any extension $\q'$ of $\q$ to any finite subextension of $L''/L$, since $L'' \subset L'$. This then implies that $\q$ splits completely in any finite subextension of $L''/L$ and so it also does in $L''/L$. Observe also that only up to $[K:\Q]$ primes of $K$ may map to a same prime $\q$ of $L$ as a consequence of Proposition \ref{MapResChar}. \\
Let us now denote by $A$ the set of all non-archimedean primes $\p$ of $K$ with odd residue characteristic and $f_\p=1$. As $A$ contains all the non-archimedean primes of $K$ that split completely in $K/\Q$, up to the finite set of primes with residue characteristic $2$, it has positive Dirichlet density, in particular
\[ \delta_K(A) = \lim_{s \to 1^+} \frac{\sum_{\p \in A} N \p^{-s}}{\log (1/(s-1))} \ge 0. \]
The image $\theta(A)$ of $A$ in $L$ then is a set of density \[ \delta_L(\theta(A)) = \lim_{s \to 1^+} \frac{\sum_{\q \in \theta(A)} N \q^{-s}}{\log (1/(s-1))} \ge \frac{\delta_K(A)}{[K:\Q]} \] as $N \p = N \q$ for $\p \in A$ and $\q = \theta(\p)$ and by the above observation $\theta^{-1}(\q)$ contains at most $[K:\Q]$ elements. Then, for any finite subextension $L^{\circ}$ of $L''/L$, the primes of $\theta(A)$ will split completely in $L^{\circ}/L$. However, since $L''/L$ is infinite, we may take $L^{\circ}/L$ of arbitrarily large degree so that $\delta_L(\theta(A)) > [L^{\circ}:L]^{-1}$. We then have it is impossible for a set of primes of density $\delta_L(\theta(A))$ to split in $L^{\circ}/L$ and we then get a contradiction, so all primes of $L$ except finitely many are in the image of $\theta$. \\
We now prove the second assertion in the statement. Let $p$ be a prime number such that any prime of $L$ above $p$ is unramified in $L/\Q$, and all the primes of $L$ that extend $p$ are in the image of $\theta$. Observe that we may take such a prime $p$ as there are infinitely many prime numbers that are unramified in $L/\Q$, and by the first part of the proposition only a finite number of them may have an extension to $L$ that is not in the image of $\theta$. For a fixed prime number $p$ with these properties, we denote by $\q_i \in \Primes_{L}$ all the possible extension of $p$ to $L$, indexed by $i \in \{1,..,k\}$ for an appropriate finite integer $k$. By our assumption, for each $i \in \{1,..,k\}$ there exists $\p_i \in \Primes_K$ such that $\theta(\p_i) = \q_i$. The inequality $f_{\p} \ge f_{\q}$ given by Proposition \ref{NormMap} then gives inequalities \[ [K:\Q] \ge \sum_{i=1}^k e_{\p_i} f_{\p_i} \ge \sum_{i=1}^k f_{\q_i} = [L:\Q] \] as $e_{\q_i}=1$ for all $i$ by our assumption that $p$ is unramified in $L/\Q$. We may then conclude $[K:\Q] \ge [L:\Q]$ as desired.
\end{proof}

\begin{proposition}\label{isopen} 
Let $m \ge 2$ and $\sigma_{m+2}: G_K^{m+2} \to G_L^{m+2}$ be a homomorphism of groups satisfying condition $(\dagger)$, and assume that the subextension $\widetilde{L}$ of $L_{m+2}/L$ corresponding to $\sigma_{m+2}(G_K^{m+2})$ is a subfield of $L_{m-1}$. Then, $\sigma_m$ has open image.
\end{proposition}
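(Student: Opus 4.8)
The plan is to reduce the statement to the finiteness of a single field extension and then bound its degree by a density argument on primes. Write $\widetilde{L}$ also for the subfield of $L_m/L$ fixed by $\sigma_m(G_K^m)$. Since $\sigma_{m+2}(G_K^{m+2})$ projects onto $\sigma_m(G_K^m)$ under $G_L^{m+2}\twoheadrightarrow G_L^m$ (Proposition \ref{Diagram}) and the given $\widetilde{L}$ lies in $L_{m-1}\subseteq L_m$, the two notions of $\widetilde{L}$ coincide; in particular $\sigma_m(G_K^m)\supseteq G_L^m[m-1]$, so $\sigma_m(G_K^m)$ is the preimage of $\sigma_{m-1}(G_K^{m-1})$ under $G_L^m\twoheadrightarrow G_L^{m-1}$, and $[G_L^m:\sigma_m(G_K^m)]=[\widetilde{L}:L]$. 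Thus $\sigma_m$ has open image if and only if $\widetilde{L}/L$ is finite, and it suffices to prove the latter; note that $\widetilde{L}\subseteq L_{m-1}$, so $\widetilde{L}/L$ is pro-solvable.

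Next I would record the available input. By Proposition \ref{daggerldown} the induced $\sigma_{m+1}$ satisfies $(\dagger)$, hence by Remark \ref{EndowedMap} $\sigma_m$ satisfies $(\dagger^{+})$; so we have the associated map of primes $\theta\colon\Primes_K\to\Primes_L$ and the factor map $\widetilde{\theta}\colon\Primes_K\to\Primes_{\widetilde{L}}$ of Remark \ref{factormap}, and Propositions \ref{NormMap}, \ref{cofiniteprimes} and \ref{MapResChar} apply. In particular $\theta$ preserves residue characteristics, all but finitely many primes of $L$ lie in the image of $\theta$, at most $[K:\Q]$ primes of $K$ lie over any given rational prime, and for $\p\in\Primes_K$, $\widetilde{\q}=\widetilde{\theta}(\p)$ and any finite subextension $L'/L$ of $\widetilde{L}/L$ one has $N(\p)\ge N(\widetilde{\q}\cap L')$.

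The key local step is the following. Let $\p\in\Primes_K$ have odd residue characteristic $p$ and $f_\p=1$, so that $K_\p=\Q_p$ and $N(\p)=p$. For every finite subextension $L'/L$ of $\widetilde{L}/L$ in which $p$ is unramified over $\Q$, writing $\q'=\widetilde{\theta}(\p)\cap L'$, the inequality $N(\q')\le p$ together with $\ch(\q')=p$ and $e_{\q'}=1$ forces $[L'_{\q'}:\Q_p]=1$. Letting $L'$ run over all finite subextensions of $\widetilde{L}/L$ shows that $\widetilde{L}_{\widetilde{\theta}(\p)}=L_{\theta(\p)}=\Q_p$; equivalently $\theta(\p)$ has a prime of $\widetilde{L}$ above it that is totally split over $L$, and the same holds in every finite subextension of $\widetilde{L}/L$.

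Finally the density contradiction. The set $A\subseteq\Primes_K$ of primes with odd residue characteristic and $f_\p=1$ contains all but finitely many primes splitting completely in $K/\Q$, so $\delta_K(A)>0$; since $\theta$ is residue-characteristic preserving, almost surjective, and at most $[K:\Q]$-to-one over each rational prime (Propositions \ref{MapResChar} and \ref{cofiniteprimes}), the image $\theta(A)$ has positive lower density in $\Primes_L$. If $\widetilde{L}/L$ were infinite, then $G(\widetilde{L}/L)$, an infinite pro-solvable quotient of $G_L^{m-1}$, would admit finite subextensions $M/L$ of arbitrarily large degree; combined with the previous step and the standard estimate that the set of primes of $L$ splitting completely in a finite extension has density at most $1/[M:L]$, this would force $\delta_L(\theta(A))=0$, a contradiction, so $\widetilde{L}/L$ is finite. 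I expect this last step to be the main obstacle, because the norm inequality only produces, for each $\q\in\theta(A)$, \emph{one} totally split prime of $M$ above $\q$: to obtain genuine complete splitting (equivalently, to run the counting of Proposition \ref{cofiniteprimes} with $M$ in place of $L$ and conclude $[K:\Q]\ge[M:\Q]$ for every finite $M\subseteq\widetilde{L}$, whence $[\widetilde{L}:L]\le[K:\Q]/[L:\Q]<\infty$) one has to exploit the $G_K^{m-1}$- and $G_L^{m-1}$-equivariance of the prime maps of Section 3, pass to Galois closures inside $L_{m-1}$, and use that over a rational prime splitting completely in both $K$ and $L$ the map $\theta$ is onto the primes of $L$.
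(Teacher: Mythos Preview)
Your reduction in the first two paragraphs is fine, and the norm inequality in the third paragraph is correct as far as it goes. The genuine gap is exactly the one you flag in the last paragraph: from Proposition~\ref{NormMap} you only obtain, for each $\p\in A$, that \emph{one} prime of $L'$ above $\theta(\p)$ has residue degree $1$; you do not obtain that $\theta(\p)$ splits completely in $L'/L$, so the density bound $1/[L':L]$ does not apply to $\theta(A)$. Your suggested repairs via equivariance and Galois closures are vague and, as written, do not close this gap: equivariance transports $\widetilde\theta(\p)$ to other primes of $\widetilde L$, but those lie over possibly different primes of $L$, not over the same $\theta(\p)$.

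The paper's proof avoids this difficulty by a change of target rather than by a direct density argument on $\widetilde L$. The hypothesis $\widetilde L\subseteq L_{m-1}$ is used in the following precise way: for any finite subextension $L'$ of $\widetilde L/L$ one has $L'_3\subseteq L_{m+2}$, so $G(L_{m+2}/L')$ surjects onto $G_{L'}^3$; composing with $\sigma_{m+2}$ (whose image lies in $G(L_{m+2}/\widetilde L)\subseteq G(L_{m+2}/L')$) and applying Lemma~\ref{DiagramLemma} yields a homomorphism $\sigma'_3\colon G_K^3\to G_{L'}^3$ that still satisfies $(\dagger)$. Now Proposition~\ref{cofiniteprimes} applies verbatim with $L'$ in place of $L$, giving $[K:\Q]\ge[L':\Q]$ for every finite $L'\subseteq\widetilde L$, hence $[\widetilde L:\Q]\le[K:\Q]$ and the image is open. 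This is the ``run Proposition~\ref{cofiniteprimes} with $M$ in place of $L$'' step you allude to, but the point is that it requires no equivariance or Galois-closure argument: the entire content is that $\widetilde L\subseteq L_{m-1}$ buys you enough solvable steps above $L'$ to rebuild the $(\dagger)$ machinery over $L'$.
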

\begin{proof}
First, observe that by Remark \ref{EndowedMap}, $\sigma_{m+1}$ satisfies condition $(\dagger^+)$, and let $\tilde{\theta}: \Primes_K \to \Primes_{\widetilde{L}}$ be the map associated to it. \\
Let $L'$ be a finite subextension of $\widetilde{L}/L$. Then, $L'_3 \subseteq L_{m+2}$, and we may see $G(L_{m+2}/\widetilde{L})$ as a subgroup of $G(L_{m+2}/L')$. By construction, we have the quotient $G(L_{m+2}/L') \surj G(L'_3/L)$, and by Lemma \ref{DiagramLemma} it follows that starting from $\sigma_{m+2}$ we may induce a homomorphism of profinite groups $\sigma'_{3}: G_K^3 \to G_{L'}^3$ which satisfies condition $(\dagger)$, and $\sigma'_2: G_K^2 \to G_L^2$ must satisfy condition $(\dagger^{+})$ by Remark \ref{EndowedMap} as $\sigma_{3}$ factors through $\sigma'_3$.\\
Then, $\sigma'_3$ induces a map $\theta': \Primes_K \to \Primes_{L'}$ as in Proposition \ref{MapPrimesNF}, which is the map of primes associated to $\sigma'_2$ and this map by construction must coincide with the composition of $\tilde{\theta}$ with the natural map $\Primes_{\widetilde{L}} \to \Primes_{L'}$. \\
This map $\theta'$ satisfies the inequality of Proposition \ref{NormMap}. We may then apply Proposition \ref{cofiniteprimes} to show that $[K:\Q] \ge [L':\Q]$. As we can do this for arbitrary finite subextensions $L'$ of $\widetilde{L}/L$, it must follow that $[K:\Q] \ge [\widetilde{L}:\Q]$, and so $\sigma_m$ is open.
\end{proof}

\begin{remark}\label{RemarkOpen}
The existence of an embedding $\tau: L_m \to K_m$ such that $\tau \sigma_m(g) = g \tau$ implies that the homomorphism $\sigma_m$ has open image, as the field $\widetilde{L}$ corresponding to $\sigma_m(G_K^m)$ must embed in $K$. It is however interesting to note that the two conditions in Proposition \ref{isopen} together give us that $\sigma_m$ has open image before knowing about the existence of $\tau$.
\end{remark}

\begin{remark}
The condition $\widetilde{L} \subseteq L_{m-1}$ is group theoretical, as it is equivalent to $\sigma_m(G_K^m) \supseteq G_L^m[m-1]$.
\end{remark}

The following two results are transposition of Lemma 5 and Lemma 6 in \cite{UchidaHom} to the $m$-step case.

\begin{proposition}\label{Existence1}
Let $m \ge 2$, and $\sigma_m: G_K^m \to G_L^m$ be a homomorphism of profinite groups satisfying condition $(\dagger^+)$. Assume $K$ and $L$ are contained in the same separable closure $\overline{\Q}$ of $\Q$. \\
Let $M/\Q$ be a Galois extension containing both $K$ and $L$. Let $H = G(M/\Q)$, and consider its subgroups $H_1 = G(M/K)$ and $H_2=G(M/L)$. Then, every element of $H_1$ is conjugate to an element of $H_2$ in $H$. \\
This result also holds when replacing $K$ and $L$ with finite subextensions $K'$ of $K_{m+1}/K$ and $L'$ of $L_{m+1}/L$ respectively such that $L'$ corresponds to $K'$ by $\sigma_{m+1}$.
\end{proposition}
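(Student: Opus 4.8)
The plan is to follow the strategy of Uchida's proof of Lemma~5 in \cite{UchidaHom}. First I would reduce, by a compactness argument, to the case where $M/\Q$ is finite. Then, for a fixed $h\in G(M/K)$, I would use the Chebotarev density theorem to realise the conjugacy class of $h$ as the Frobenius class of a rational prime $p$ that admits a degree-one prime $\p$ above it in $K$, and finally transport this to $L$ by means of the prime map $\theta$ associated to $\sigma_m$ together with the norm inequality of Proposition~\ref{NormMap}. The crux is that $N(\theta(\p))\le N(\p)$ while $N(\theta(\p))$ is a positive power of $\ch(\p)$, hence $\ge\ch(\p)$; so if $N(\p)=\ch(\p)$ then $\theta(\p)$ is again a degree-one prime --- precisely the input needed to exhibit a Frobenius element lying in $G(M/L)$ and conjugate to $h$.

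In detail, I would first isolate the finite-level assertion: for every finite Galois extension $N/\Q$ with $K,L\subseteq N$, every element of $G(N/K)$ is conjugate in $G(N/\Q)$ to an element of $G(N/L)$. To prove this, fix $h\in G(N/K)$; by Chebotarev applied to $N/\Q$ there is a prime number $p$, unramified in $N/\Q$, whose Frobenius conjugacy class in $G(N/\Q)$ equals the conjugacy class of $h$, and we may choose a prime $\Pc$ of $N$ above $p$ with Frobenius exactly $h$. Since $h\in G(N/K)$, the decomposition group of $\Pc$ in $G(N/\Q)$ --- which is cyclic, as $p$ is unramified --- is contained in $G(N/K)$, hence coincides with the decomposition group of $\Pc$ in $G(N/K)$; comparing their orders (the residue degrees of $\Pc$ over $p$ and over $\p:=\Pc\cap K$ respectively) forces $f_{\p}=1$, i.e. $N(\p)=p$. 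Put $\q=\theta(\p)\in\Primes_L$. By Proposition~\ref{NormMap} we have $N(\q)\le N(\p)=p$, while $N(\q)=p^{f_{\q}}\ge p$, so $f_{\q}=1$; moreover $\q$ is unramified over $\Q$ since it divides $p$ and $p$ is unramified in $N\supseteq L$. For any prime $\mathfrak{Q}$ of $N$ above $\q$, the same order comparison shows that its decomposition group in $G(N/\Q)$ equals its decomposition group in $G(N/L)$, so the Frobenius of $\mathfrak{Q}$ in $G(N/\Q)$ lies in $G(N/L)$; as this element belongs to the conjugacy class of $h$, we conclude that $h$ is conjugate in $G(N/\Q)$ to an element of $G(N/L)$.

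To deduce the statement for a possibly infinite $M$, fix $h\in G(M/K)$, and for each finite Galois $N/\Q$ with $K,L\subseteq N$ let $p_N\colon G(M/\Q)\to G(N/\Q)$ be the projection and $A_N=\{x\in G(M/\Q):p_N(xhx^{-1})\in G(N/L)\}$. Each $A_N$ is closed, and non-empty by the finite-level assertion applied to the image of $h$ in $G(N/K)$; since such $N$ are directed under composita, the family $\{A_N\}$ has the finite-intersection property, so by compactness $\bigcap_N A_N\neq\emptyset$, and cofinality of these $N$ among the finite Galois subextensions of $M/\Q$ containing $L$ shows that any $x$ in the intersection satisfies $xhx^{-1}\in G(M/L)$. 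Finally, for the variant with finite subextensions $K'$ of $K_{m+1}/K$ and $L'$ of $L_{m+1}/L$ with $L'$ corresponding to $K'$ under $\sigma_{m+1}$, I would restrict $\sigma_{m+1}$ to the open subgroup $G(K_{m+1}/K')\to G(L_{m+1}/L')$, check that this restriction still yields a characteristic-preserving map of primes $\Primes_{K'}\to\Primes_{L'}$ satisfying the norm inequality of Proposition~\ref{NormMap}, and then repeat the argument verbatim. The step I expect to be the main obstacle is this last one --- verifying that the $(\dagger^+)$-type hypotheses and Proposition~\ref{NormMap} descend to the relevant open subgroups in the variant; by contrast, the compactness reduction and the finite-level decomposition-group bookkeeping are routine.
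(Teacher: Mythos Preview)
Your proof of the first assertion is correct and follows the paper's route: realise $h$ as a Frobenius at an unramified prime $\Pc$, deduce $f_{\p}=1$ for $\p=\Pc\cap K$, apply Proposition~\ref{NormMap} to get $f_{\q}=1$ for $\q=\theta(\p)$, then transport $\Pc$ to a prime above $\q$. Your compactness reduction to finite $M$ is an addition the paper omits (it argues with $M$ directly), but it is correct and harmless.

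For the variant with $K',L'$ you and the paper differ. You propose to restrict $\sigma_{m+1}$ to $G(K_{m+1}/K')\to G(L_{m+1}/L')$ and re-establish a $(\dagger^+)$-style prime map with norm inequality at that level; this is the obstacle you flag, and it is genuine work, since $G(K_{m+1}/K')$ is not of the form $G_{K'}^{m'}$ and so Definition~\ref{daggerplus} and Proposition~\ref{NormMap} do not apply verbatim. The paper sidesteps this entirely: it notes $h'\in G(M/K')\subseteq H_1$, so the first-part argument already gives $f_{\p'}=1$ for $\p'=\Pc\cap K'$; then, rather than building a new prime map for the restricted homomorphism, it lifts $\p'$ to a prime $\hat{\p}$ at a level where the maps $\theta^{(l')}$ of Remark~\ref{lMapPrimes} are already available, pushes $\hat{\p}$ across to $\hat{\q}$, and sets $\q'=\hat{\q}\cap L'$. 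The inequality $N(\q')\le N(\p')$ then comes from the same local roots-of-unity comparison that underlies Proposition~\ref{NormMap}. This buys economy --- nothing about the restricted homomorphism needs to be re-verified, because the relevant prime of $L'$ is simply read off from maps that already exist at a higher level.
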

\begin{proof}
Take $h \in H_1$, and let $\Pc \in \Primes_M$ be a prime unramified in $M/\Q$ on which $h$ induces the Frobenius automorphism. Let $\p$ be the prime of $K$ below $\Pc$, and set $\theta(\p)=\q$. As $\Pc$ is unramified in $M/\Q$, $\p$ is unramified in $K/\Q$. By Proposition \ref{NormMap} it follows that $f_\q = 1$. We may then take $\mathfrak{Q} \in \Primes_M$ above $\q$ and, as $\ch(\p)=\ch(\q)$, there must be an element $t \in H$ such that $t \Pc = \mathfrak{Q}$. Then, $t h t^{-1} \in H_2$. \\
We now prove the second part of the statement. Assume that $M$ contains both $K'$ and $L'$. Naturally, $G(M/K') \subseteq G(M/K)$, so $h' \in G(M/K') \implies h' \in H_1$. Using the same notation as  above, let $\p' = \Pc \cap K'$, and for some prime $\hat{\p}$ above $\p'$ in $K_{m+1}$, and a prime number $l \neq \ch(\p)$, consider the prime $\hat{\q} = \theta_{m+1}^{(l')}(\hat{\p})$, and $\q' = \hat{\q} \cap L'$. Then, by the same argument as above, we may take a prime $\mathfrak{Q}' \in \Primes_M$ above $\q'$, and an element $t' \in H$ such that $t' \Pc = \mathfrak{Q}'$, so that for $h' \in G(M/K')$ we have $t'h't'^{-1} \in G(M/L')$. 
 \end{proof}

\begin{proposition}\label{Existence2}
Let $m \ge 2$, and $\sigma_m: G_K^m \to G_L^m$ be a homomorphism of profinite groups satisfying condition $(\dagger^+)$. Assume $K$ and $L$ are contained in the same separable closure $\overline{\Q}$ of $\Q$. \\ Let $L'$ be a finite Galois extension of $L$ contained in $L_m$ and let $K'$ be the finite Galois extension of $K$ contained in $K_m$ corresponding to $L'$ by $\sigma_m$. Let $M \subset \overline{\Q}$ be a finite Galois extension of $\Q$ containing $L'$ and $K$. Then $M$ also contains $K'$.
\end{proposition}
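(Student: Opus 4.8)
The plan is to detect the inclusion $K'\subseteq M$ by comparing sets of completely split primes, using condition $(\dagger^+)$ only through the associated prime map $\theta\colon\Primes_K\to\Primes_L$. For a finite extension $E/K$ write $\mathrm{Spl}(E/K)$ for the set of primes of $K$ splitting completely in $E$. As $M/\Q$ is Galois and $K\subseteq M$, both $M/K$ and $MK'/K$ are Galois ($K'/K$ being Galois by Remark \ref{quotients}), so $K'\subseteq M\iff MK'=M\iff \delta_K(\mathrm{Spl}(M/K))=\delta_K(\mathrm{Spl}(MK'/K))$ by Chebotarev; since $\mathrm{Spl}(MK'/K)=\mathrm{Spl}(M/K)\cap\mathrm{Spl}(K'/K)$, it is enough to find $S\subseteq\mathrm{Spl}(M/K)$ with $\delta_K(S)=\delta_K(\mathrm{Spl}(M/K))$ and $S\subseteq\mathrm{Spl}(K'/K)$. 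I would take $S$ to be the set of primes $\p$ of $K$ whose residue characteristic $p$ splits completely in $M/\Q$; a Chebotarev count over $\Q$ gives $\delta_K(S)=[K:\Q]/[M:\Q]=1/[M:K]=\delta_K(\mathrm{Spl}(M/K))$, and $S\subseteq\mathrm{Spl}(M/K)$ because for $\p\in S$ every prime of $M$ (hence of $K$) above $p$ has completion $\Q_p$.

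Fix $\p\in S$ and put $\q=\theta(\p)$; by $(\dagger^+)$ (Definition \ref{daggerplus}) we have $\ch(\q)=p$ and $\sigma_m(D_\p)\subseteq D_\q$ for suitably chosen decomposition groups $D_\p\subseteq G_K^m$, $D_\q\subseteq G_L^m$. Because $L\subseteq L'\subseteq M$ and $p$ splits completely in $M/\Q$, every prime of $L$ and of $L'$ above $p$ has completion $\Q_p$; in particular $L_\q=\Q_p$ and $L'_{\mathfrak r}=\Q_p=L_\q$ for every prime $\mathfrak r$ of $L'$ above $\q$, so $\q$ splits completely in $L'/L$, i.e.\ $D_\q\subseteq G(L_m/L')$. (Proposition \ref{NormMap} gives the auxiliary inequality $N(\q)\le N(\p)=p$ — hence $f_\q=1$ — although $L\subseteq M$ already forces $L_\q=\Q_p$.) Now by the definition of ``$K'$ corresponds to $L'$ by $\sigma_m$'' (Remark \ref{quotients}) one has $G(K_m/K')=\sigma_m^{-1}(G(L_m/L'))$, so $\sigma_m(D_\p)\subseteq D_\q\subseteq G(L_m/L')$ yields $D_\p\subseteq G(K_m/K')$, i.e.\ $\p$ splits completely in $K'/K$. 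Thus $S\subseteq\mathrm{Spl}(K'/K)$, and with the first paragraph this proves $K'\subseteq M$. (This is the $m$-step analogue of the last step of Uchida's Lemma~6 in \cite{UchidaHom}, with the Frobenius compatibility of his Lemma~5 replaced by the built-in compatibility $\sigma_m(D_\p)\subseteq D_{\theta(\p)}$ of $(\dagger^+)$.)

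The delicate point is conceptual rather than computational: the prime $\q=\theta(\p)$ produced by $\sigma_m$ is a priori unrelated to the ``na\"ive'' prime of $L$ below a chosen prime of $M$ above $\p$, and identifying the two is essentially the anabelian conclusion being bootstrapped, not yet available. The argument therefore never makes that identification — it only ever compares $\theta(\p)$ against the subfields $L\subseteq L'\subseteq L_m$, where $(\dagger^+)$ (together with $L'\subseteq M$) supplies exactly what is needed — and one must be careful that $S$ is density-generic inside $\mathrm{Spl}(M/K)$, so that the Chebotarev comparison of the Galois extensions $M/K$ and $MK'/K$ really does force $MK'=M$.
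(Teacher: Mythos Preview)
Your proof is correct and follows essentially the same approach as the paper: both arguments pick the set of primes $\p$ of $K$ whose residue characteristic $p$ splits completely in $M/\Q$, show via $\sigma_m(D_\p)\subseteq D_{\theta(\p)}\subseteq G(L_m/L')$ that such $\p$ split completely in $K'/K$, and then conclude $K'\subseteq M$ by a Chebotarev/Bauer comparison. Your write-up is in fact a bit cleaner: the paper appeals to Proposition~\ref{cofiniteprimes} (cofiniteness of the image of $\theta$) when restricting the set of $p$'s, but as you observe this is unnecessary --- since $\theta$ is everywhere defined and residue-characteristic preserving, the inclusion $D_{\theta(\p)}\subseteq G(L_m/L')$ already follows from $L'\subseteq M$ alone, and your density computation $\delta_K(S)=[K:\Q]/[M:\Q]=1/[M:K]$ makes the final step entirely explicit.
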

\begin{proof}
Let $p$ be a prime number that splits completely in $M/\Q$. Then, all the primes above $p$ in $L$ split completely in $L'/L$. Since almost all primes of $L$ are in the image of $\theta$, we may assume that all the primes of $L$ above $p$ are in the image of $\theta$. \\
The inverse image by $\theta$ of every prime of $L$ above $p$ is a prime of $K$ which must also be unramified in $K/\Q$ as $K \subset M$ and by assumption $p$ splits completely in $M/\Q$, so all the primes of $K$ above $p$ are unramified in $K/\Q$. Furthermore, $L'$ corresponds to $K'$, and all the primes above $p$ split completely in $L'/L$, so they also split completely in $K'/K$. \\
We may then conclude that, up to the finite set of primes of $K$ above the primes $p$ for which there is a prime of $L$ above $p$ not in the image of $\theta$, every prime that splits completely in $M/K$ splits completely in $K'/K$, and we may conclude that $K' \subseteq M$.
\end{proof}

We are now able to prove the following theorem, which proves a conditional existence for the $m$-Step Hom-Form. The proof follows roughly the same methodology used by Uchida in \cite{UchidaIsom} and \cite{UchidaHom}, and by Sa\"idi and Tamagawa in \cite{S-T}. 

\begin{theorem}\label{Existence}
Let $m \ge 1$ be a positive integer, and let $\sigma_{m+1}: G_K^{m+1} \to G_L^{m+1}$ be a homomorphism of profinite groups which satisfies condition $(\dagger^+)$. Consider the induced homomorphism of profinite groups $\sigma_m: G_K^m \to G_L^m$.
Then, there exists an embedding of fields $\tau: L_m \to K_m$ such that \[ \tau \sigma_m(g) = g \tau \] for all $g \in G_K^m$.
\end{theorem}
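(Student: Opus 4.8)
The plan is to obtain $\tau$ as an inverse limit of compatible embeddings defined on the finite Galois layers of $L_m/\Q$, following the structure of Uchida's proof of the conditional Hom-Form. Fix a common separable closure $\overline{\Q}$ of $\Q$ with $K,L\subseteq\overline{\Q}$; since $G_K[m]$ and $G_L[m]$ are characteristic in $G_K$ and $G_L$, which are normal in $G_\Q$, the fields $K_m$ and $L_m$ are Galois over $\Q$. By Remark \ref{EndowedMap} the induced homomorphism $\sigma_m$ satisfies condition $(\dagger^+)$, so it carries an associated map of primes $\theta:\Primes_K\to\Primes_L$ (for $m=1$ one argues identically, using $\sigma_{m+1}=\sigma_2$ and the finite layers of $L_2/L$ wherever $\sigma_m$ and $L_m$ appear below).

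For each finite Galois extension $N/\Q$ with $L\subseteq N\subseteq L_m$, let $N^{*}\subseteq K_m$ be the field with $G(K_m/N^{*})=\sigma_m^{-1}\bigl(G(L_m/N)\bigr)$; then $N^{*}/K$ is Galois and corresponds to $N$ by $\sigma_m$ in the sense of Remark \ref{quotients}. Let $T_N$ be the set of field embeddings $\tau_N:N\hookrightarrow K_m$ with $\tau_N(\sigma_m(g)x)=g\,\tau_N(x)$ for all $g\in G_K^m$ and $x\in N$. A direct computation with fixed fields shows that every $\tau_N\in T_N$ has image exactly $N^{*}$ and restricts to an embedding $L\hookrightarrow K$; in particular each $T_N$ is finite. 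Restriction of embeddings makes $\{T_N\}_N$ an inverse system of finite sets over the directed set of such $N$, and since $L_m=\bigcup_N N$, the limit $\varprojlim_N T_N$ is canonically the set of embeddings $\tau:L_m\hookrightarrow K_m$ satisfying $\tau\sigma_m(g)=g\tau$ for all $g\in G_K^m$. As an inverse limit of a directed system of nonempty finite sets is nonempty, the theorem reduces to proving $T_N\neq\emptyset$ for every such $N$.

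Proving $T_N\neq\emptyset$ is the crux. I would choose a finite Galois extension $M/\Q$ inside $\overline{\Q}$ containing $N$ and $K$; by Proposition \ref{Existence2}, applied to the corresponding pair $N\leftrightarrow N^{*}$, one also has $N^{*}\subseteq M$, so $M$ contains $K$, $L$, $N$ and $N^{*}$. With $H=G(M/\Q)$, Proposition \ref{Existence1} gives that every element of $G(M/N^{*})$ is $H$-conjugate into $G(M/N)$ and every element of $G(M/K)$ is $H$-conjugate into $G(M/L)$. One must then upgrade this elementwise conjugacy to the existence of a single $\rho\in H$ with $\rho\,G(M/N^{*})\,\rho^{-1}\subseteq G(M/N)$ — so that $\rho^{-1}$ restricts to an embedding $N\hookrightarrow N^{*}$ — which is moreover compatible with $\sigma_m$, i.e.\ $\rho^{-1}|_N\in T_N$; then $\tau_N:=\rho^{-1}|_N$ does the job.

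This upgrading is the delicate point shared by all Neukirch--Uchida-type arguments, and is where I expect the real difficulty to lie. It is carried out by tracking Frobenius elements: a density-one set of primes, together with the inclusions $\sigma_m(D_\p)\subseteq D_{\theta(\p)}$ of decomposition groups and the facts that $\theta$ is compatible with $\sigma_m$ (Proposition \ref{MapPrimes}), preserves residue characteristic (Proposition \ref{MapResChar}) and satisfies the norm inequalities of Proposition \ref{NormMap}, provides — via Chebotarev's density theorem — enough rigidity to pin down the conjugating element on the relevant decomposition subgroups of $H$; one runs this step by step along the solvable filtration of $N/L$. The subtlest part is arranging that $\rho$ can be chosen compatibly with $\sigma_m$ itself, and not merely with the bare correspondence of primes, exactly as in Uchida's original proof.
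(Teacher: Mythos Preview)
Your inverse-limit framework is correct and matches the paper's: reduce to finding, for each finite Galois $N/\Q$ with $L\subseteq N\subseteq L_m$ and corresponding $N^{*}\subseteq K_m$, a compatible embedding $\tau_N:N\hookrightarrow N^{*}$, and then take the limit over nonempty finite sets. You also correctly invoke Propositions~\ref{Existence1} and~\ref{Existence2} to place everything inside a common finite Galois extension $M/\Q$ and to obtain elementwise conjugacy.

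The genuine gap is precisely the step you flag as ``delicate'' and then do not actually carry out. Your sketch --- ``track Frobenius elements, use Chebotarev along the solvable filtration of $N/L$'' --- is not a method: elementwise conjugacy of $G(M/N^{*})$ into $G(M/N)$ does \emph{not} by itself produce a single $\rho\in H$ conjugating one group into the other, let alone one compatible with $\sigma_m$ on generators, and no density argument on its own bridges this. The paper (following Uchida) fills this gap with a specific algebraic construction that you have not indicated: one chooses a prime $p\equiv 1\pmod{|H|}$ with $p>|H|^2$, realises a split extension
\[
1\to \F_p[H]^{n+1}\to E\to H\to 1
\]
as $G(M'/\Q)$ for some $M'\supseteq M$ via the embedding problem (\cite{NWS}, Proposition~9.2.9), picks characters $\chi_i$ of the cyclic groups generated by a chosen generating set $h_{1,i}$ of $G(K'/K)$, and builds auxiliary $p$-extensions $M_{1,i},M_{2,i}$ on which the $S_{1,i}/T_1$- and $S_{2,i}/T_2$-actions are by these characters. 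Proposition~\ref{Existence1} is then applied not to $M$ but to the larger field $M'$, at the level of the composites $\prod_iM_{1,i}$ and $\prod_iM_{2,i}$; this yields an element $h\in H$ conjugating a carefully chosen element
\[
a=\sum_{t_1\in T_1}(t_1-1)u_0+\sum_{i=1}^n(s_{1,i}-\chi_i(s_{1,i}))u_i\in\F_p[H]^{n+1}
\]
into the relevant subgroup on the $L$-side. A coefficient-counting argument in the group ring $\F_p[H]$ (using $p>|H|^2$) then forces $hT_1h^{-1}\subseteq T_2$ and $hs_{1,i}h^{-1}\in s_{2,i}T_2$ for every $i$, which is exactly the statement that $h^{-1}|_{L'}:L'\hookrightarrow K'$ lies in $T_{L'}$. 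This group-ring trick is the entire content of the step you left open; without it, or an explicit substitute, the proof is incomplete.
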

\begin{proof}
Let $L'$ be a finite Galois subextension of $L_m/L$, and let $K'$ be the finite Galois subextension of $K_m/K$ corresponding to it by $\sigma_m$. Set $H_1 = G(K'/K)$ and $H_2 = G(L'/L)$. Recall that the map $\sigma: H_1 \inj H_2$ induced from $\sigma_m$ by quotients is injective by Remark \ref{quotients}. \\
Let $N$ be a finite Galois extension of $\Q$ for which we have embeddings $K' \inj N$ and $L' \inj N$, and let us consider the images of these embeddings canonically identified with $K'$ and $L'$. Let us also denote $\Omega$ a separable closure of $\Q$ containing $N$. \\ We then have the Galois groups $H=G(N/\Q)$, $S_1=G(N/K)$, $S_2 = G(N/L)$, $T_1 = G(N/K')$ and $T_2 = G(N/L')$. Consider a set of generators $h_{1,1},...,h_{1,n}$ for $H_1$, and set $h_{2,i}=\sigma(h_{1,i})$ for $i=1,...,n$. As $H_1=S_1/T_1$, for all $i=1,...,n$ we may define an element $s_{1,i} \in S_1$ such that $s_{1,i} T_1 = h_{1,i}$, and similarly $s_{2,i} \in S_2$ such that $s_{2,i} T_2 = h_{2,i}$. For each $i=1,...,n$ we then define a subgroup $S_{1,i}$ generated by $s_{1,i}$ and $T_1$, and similarly, $S_{2,i}$ is the subgroup generated by $s_{2,i}$ and $T_2$. Let us also set $S_{1,0}=T_1$ and $S_{2,0}=T_2$. \\ 
Consider the subfield $N_{1,i}$ of $N$ corresponding to $S_{1,i}$ and the subfield $N_{2,i}$ of $N$ corresponding to $S_{2,i}$. By the above construction, $S_{1,i}/T_1 \isom G(K'/N_{1,i})$ is a cyclic subgroup of $H_1$ generated by $h_{1,i}$, and $S_{2,i}/T_2$ is a cyclic subgroup of $H_2$ generated by $h_{2,i}=\sigma(h_{1,i})$. We then have that $\sigma$ restricts to a surjective homomorphism $S_{1,i}/T_1 \surj S_{2,i}/T_2$. However, as $\sigma$ is also injective, this is an isomorphism. Furthermore, $N_{2,i}$ must correspond to $N_{1,i}$ by $\sigma_m$. \\
\begin{center}
\begin{tikzcd}
K_{m+1}\arrow[dash,bend right=30]{dddd}\arrow[dash]{dd}&&M\arrow[dash]{d}\arrow[dash]{dl}\arrow[dash]{dr}\arrow[dash,bend left=15]{ddddd}{\F_p[H]^{m+1}}&&L_{m+1} \arrow[dash]{dd}\arrow[dash,bend left=30]{dddd}\\
&N\prod M_{1,i}\arrow[dash]{dl}\arrow[dash]{d}&M_i\arrow[dash,swap]{dl}{B_{1,i}}\arrow[dash]{dr}{B_{2,i}}\arrow[dash,swap]{dddd}{\F_p[H]u_i}&N\prod M_{2,i}\arrow[dash]{d}\arrow[dash]{dr}&\\
\prod M_{1,i}\arrow[dash]{d}&N M_{1,i} \arrow[dash]{dl} \arrow[dash]{dddr}&& N M_{2,i} \arrow[dash]{dr}\arrow[dash]{dddl}&\prod M_{2,i}\arrow[dash]{d} \\
M_{1,i} \arrow[dash]{dddr}&&&& M_{2,i} \arrow[dash]{dddl} \\
K_m \arrow[dash, bend right=30]{ddddr} \arrow[dash]{ddr} &&&&L_m \arrow[dash, bend left=30]{ddddl} \arrow[swap,hook',dashed,near start,"\tau"]{llll} \arrow[dash]{ddl} \\&&
N \arrow[dash,swap,"T_1"]{dl} \arrow[dash,swap,close,near end,"S_{1,i}"]{ddl}  \arrow[dash,"S_{1}"]{dddl}\arrow[dash,"T_2"]{dr}\arrow[dash,close,near end,"S_{2,i}"]{ddr}\arrow[dash,swap,"S_2"]{dddr}\arrow[dash]{dddd}{H} &&\\
&K'\arrow[dash]{d}\arrow[dash,bend right=45,swap]{dd}{H_1}&&L'\arrow[dash]{d}\arrow[dash,bend left=45]{dd}{H_2}&\\
&N_{1,i}\arrow[dash]{d}&&N_{2,i}\arrow[dash]{d}& \\
&K\arrow[dash]{dr}&&L\arrow[dash]{dl} \arrow[swap,hook',dashed,near start,"\tau"]{ll}& \\
&&\Q&&
\end{tikzcd}
\end{center}
We may now take a prime number $p$ such that $p \equiv 1$ (mod $|H|$) and $p > |H|^2$, and consider a split group extension \[1 \to \F_p[H]^{n+1} \to E \to H \to 1.\]
Then, by (\cite{NWS}, Proposition 9.2.9) there exists a Galois extension $M$ of $\Q$ containing $N$ such that $G(M/N) = \F_p[H]^{n+1}$ and $G(M/\Q)=E$. We may also take a set of elements $u_0,...,u_n$ of $\F_p[H]^{n+1}$ so that we may write \[\F_p[H]^{n+1} = \bigoplus_{i=0}^n \F_p[H]u_i\] and for every $i=0,...,n$, we consider the subextension $M_i$ of $M/N$ determined by the subgroup $\oplus_{j \neq i} \F_p[H]u_j$ of $\F_p[H]^{n+1}$. By construction, it follows that $M_i$ is a Galois extension of $\Q$, and $G(M_i/\Q)$ is determined by the split extension \[1 \to \F_p[H]u_i \to G(M_i/\Q) \to H \to 1.\]
The following construction is given for all $i=0,...,n$.
Let $\chi_i$ be a character of $S_{1,i}/T_i$ of order $|S_{1,i}/T_i|$ (observe that when $i=0$, $S_{1,0}/T_1$ is trivial and so is $\chi_0$), which we may consider as valued in $\F_p$. Since $\sigma$ induces an isomorphism $S_{1,i}/T_1 \isom S_{2,i}/T_2$, we may induce by $\chi_i \sigma^{-1}$ a character of $S_{2,i}/T_2$, which we will denote $\chi_i'$. \\
Let $M_{2,i}/L'$ be the maximal $p$-extension of $L'$ contained in $M_i$ where the operation given by the action of $S_{2,i}/T_2$ on $G(M_{2,i}/L')$ coincides with the scalar multiplication of the values of $\chi_i'$. \\ As this is an abelian extension of $L' \subset L_m$, we get $M_{2,i}$ can be canonically identified with a subfield of $L_{m+1}$. We may then also consider the subextension of $K_{m+1}/K$ corresponding to $M_{2,i}$ by $\sigma_{m+1}$, which we will denote by $M_{1,i}$, and as $M_{2,i}$ is an extension of $L'$, it follows that $M_{1,i}$ is an extension of $K'$. By Proposition \ref{Existence2} as $M_i$ contains $M_{2,i}$ and $K'$, it also contains $M_{1,i}$. \\
Furthermore, we have injective maps $\sigma'_i: G(M_{1,i}/N_{1,i}) \inj G(M_{2,i}/N_{2,i})$ induced by $\sigma_{m+1}$ by quotients (see Remark \ref{quotients}). However, since $N_{2,i}$ corresponds to $N_{1,i}$ by $\sigma_m$, this is in fact an isomorphism. It follows that the operation given by the action of $S_{1,i}/T_1$ on $G(M_{1,i}/N_{1,i})$ must coincide with scalar multiplication by the values of $\chi_i$. \\
We may then consider the composite field $N M_{1,i}$, which is a subextension of $M_i/N$ and so is corresponding to a subgroup $B_{1,i}$ of $F_p[H]u_i = G(M_i/N)$. Similarly, we define the subgroup $B_{2,i}$ corresponding to $N M_{2,i}$. \\
By construction, $G(M_{1,i}/K')$ and $G(NM_{1,i}/N)=\F_p[H]u_i / B_{1,i}$ are isomorphic as $S_{1,i}/T_1$-modules, thus if we take an element $b_{1,i} \in B_{1,i}$, we may construct a subgroup $(b_{1,i} - \chi_i(b_{1,i}))\F_p[H]u_i$ of $B_{1,i}$. We may then consider the subgroup $C_{1,i}$ generated by all the $(b_{1,i} - \chi_i(b_{1,i}))\F_p[H]u_i$ as $b_{1,i}$ varies in $B_{1,i}$. Furthermore, we may repeat an analogous construction over $L'$ to construct a subgroup $C_{2,i}$. \\
As the action of $T_2$ on $\F_p[H]u_i / C_{2,i}$ is trivial, $C_{2,i}$ must correspond to a subfield of $M_i$ containing $NM_{2,i}$, which must also be an abelian $p$-extension of $L'$ where the operation of $S_{2,i}/T_2$ coincides with the multiplication by the values of $\chi_i'$. However, by definition $M_{2,i}$ is the maximal abelian $p$-extension of $L'$ with this property, thus $B_{2,i}=C_{2,i}$. \\ 
Let us then take the composite $\prod M_{1,i}$ of all the $M_{1,i}$, which is a subfield of $K_{m+1}$, and likewise $\prod M_{2,i}$ is a subfield of $L_{m+1}$. By construction, these two fields correspond to each other by $\sigma_{m+1}$ as they are composites of fields corresponding by $\sigma_{m+1}$. By Proposition \ref{Existence1} any element of the Galois group $G(M/\prod M_{1,i})$ is conjugate to an element of $G(M/\prod M_{2,i})$ by an element of $E$. Furthermore, $\F_p[H]^{n+1}$ is a normal subgroup of $E$. We may therefore consider the subgroups $A_1$ and $A_2$ of $\F_p[H]^{n+1}$, corresponding to the subfields $N \prod M_{1,i}$ and $N \prod M_{2,i}$ of $M$ respectively,  and we get that any element of $A_1$ must be conjugate to an element of $A_2$ by an element of $E$. Finally, since $C_{2,i}$ is the subgroup of $\F_p[H]^{n+1}$ corresponding to $NM_{2,i}$, we have that $A_2 = \sum_i C_{2,i}$. By the correspondence induced by $\sigma_{m+1}$, we get $A_1 \supseteq \sum_i C_{1,i}$. Furthermore, by the split exact sequence, conjugation corresponds to the action on $\F_p[H]^{n+1}$ given by left multiplication by an element of $H$. Therefore, we may fix an element
 \[a = \sum_{t_1 \in T_1} (t_1 - 1)u_0 + \sum_{i=1}^m (s_{1,i}-\chi_i(s_{1,i}))u_i \]
in $A$. For some $h \in H$, we get $ha \in A_2$ which we may rewrite as the two equations
\[h  \sum_{t_1 \in T_1} (t_1 - 1)u_0  \in B_{2,0}\] and \[h (s_{1,i}-\chi_i(s_{1,i}))u_i \in B_{2,i}.\] Expanding the first equation, we get \[ h \sum_{t_1 \in T_1} (t_1 - 1) \in \sum_{t_2 \in T_2} (t_2 - 1) \F_p[H]u_0\] and since \[\sum_{t_2 \in T_2} t_2 \sum_{t_2 \in T_2} (t_2 - 1) = 0 \in \F_p[H],\] we can rewrite this as follows: \[ \sum_{t_2 \in T_2} t_2 h \sum_{t_1 \in T_1} (t_1 - 1) = 0 \in \F_p[H]u_0. \]
We then fix an element $t'_1 \in T_1$. As the sum cancels out in $\F_p[H]u_0$, the coefficient of $h t'_1 \in H$ in the left side of the sum must be a multiple of $p$. The number of terms in the sum of the form $t''_2 h t''_1$ for some $t''_1 \in T_1$ and $t''_2 \in T_2$ must be less than $|H|^2$, which by our assumption is less than $p$. Then, the number of terms of the form $t''_2 h t''_1=h t'_1$ (which are all elements of $H$) is also less than $p$. \\ We then get that $h t'_1$ must cancel out with a term of the form $-t'_2 h$ for some $t'_2 \in T_2$, that is $t'_2 h = h t'_1$, and so we get $h^{-1} T_2 h \subseteq T_1$, therefore $h^{-1}$ induces an injective homomorphism $L' \to K'$. \\
Following the same idea as above, we observe $h (s_{1,i}-\chi_i(s_{1,i}))u_i \in B_{2,i}$ may be rewritten as 
\[\sum_{s \in S_{2,i}} s \chi_i'(s)^{-1} h (s_{1,i} - \chi_i(s_{1,i})) = 0 \in \F_p[H]u_i.\]
Then, repeating the same argument used above for $h t'_1$, we have the coefficient of $h s_{1,i}$ in the sum must be $0$, then for some $s' \in S_{2,i}$ we have $h s_{1,i} = s' h$ and $\chi_i'(s')=\chi_i(s_{1,i})$. 
Then, $h_{2,i} = s_{2,i} T_2 = s' T_2$ follows by the definition of $\chi_i'$. Furthermore, as $h^{-1}s'=s_{1,j}h^{-1}$, the actions defined by $h^{-1}\sigma(h_{1,j})$ and $h_{1,j} h^{-1}$ on $L'$ coincide. \\
As the $h_{1,i}$ generate $H_1$, it follows that $h^{-1}$ determines an embedding $L' \to K'$ which induces $\sigma$. However, $L'$ is a Galois subextension of $L_m/L$ chosen arbitrarily, so we may construct the set $\mathfrak{A}_{L'}$ of all embeddings $L' \inj K'$ for every finite Galois extension $L'$ of $L$ contained in $L_m$. These sets are non-empty and finite (as $h \in H$, and $H$ is finite). Then, the family $\mathfrak{A}_{L'}$ indexed over $L'$ defines a projective system of non-empty finite sets, so we may take the inverse limit over all $L'$ finite Galois subextensions of $L_m/L$. We then obtain that the set of embeddings $\tau_m: L_m \inj K_m$ inducing $\sigma_m$ is non-empty, as desired.
\end{proof}

We end this section by giving a result which is analogous to Theorem 1 in \cite{S-T}, and which is simply obtained as a consequence of Theorem \ref{Existence} when $m=1$.

\begin{corollary}\label{EmbeddingType} The following statements are true:
\begin{enumerate}
\item Let $\sigma_2: G_K^2 \to G_L^2$ be a homomorphism of profinite groups that satisfies condition $(\dagger^+)$. Then, there exists an embedding $\tau: L \to K$.
\item Let $\sigma_3: G_K^3 \to G_L^3$ be a homomorphism of profinite groups that satisfies condition $(\dagger)$. Then, there exists an embedding $\tau: L \to K$.
\end{enumerate}
\end{corollary}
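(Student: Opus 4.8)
The plan is to deduce both statements from Theorem \ref{Existence} in the case $m=1$, so that the substantive work is already done; what is left is to check that the hypotheses line up and to descend the resulting field embedding from $L_1$ to $L$.

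For part (1) I would apply Theorem \ref{Existence} with $m=1$. Its hypothesis is precisely that $\sigma_{m+1}=\sigma_2$ satisfy condition $(\dagger^+)$, which is what is assumed; note that $(\dagger^+)$ is a condition imposed at the $2$-step level, so there is no conflict with the requirement $m\ge 2$ in Definition \ref{daggerplus}. The theorem then produces an embedding of fields $\tau: L_1\hookrightarrow K_1$ satisfying $\tau\sigma_1(g)=g\tau$ for all $g\in G_K^{1}$, where $\sigma_1: G_K^{\ab}\to G_L^{\ab}$ is the homomorphism induced by $\sigma_2$. It remains to observe that $\tau$ carries $L$ into $K$: for $x\in L$ and any $g\in G_K^{\ab}=G(K_1/K)$ we have $g(\tau(x))=\tau(\sigma_1(g)(x))=\tau(x)$, since $\sigma_1(g)\in G_L^{\ab}=G(L_1/L)$ fixes $x$; hence $\tau(x)$ lies in the fixed field $K$ of $G(K_1/K)$. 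Thus $\tau$ restricts to an embedding $L\hookrightarrow K$, which is the assertion.

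For part (2) the only additional step is to manufacture the hypothesis of part (1): since $\sigma_3: G_K^3\to G_L^3$ satisfies condition $(\dagger)$, Remark \ref{EndowedMap} (applied with $m=2$) shows that the induced homomorphism $\sigma_2: G_K^2\to G_L^2$ satisfies condition $(\dagger^+)$, and part (1) then applies to $\sigma_2$ and yields the embedding $\tau: L\hookrightarrow K$.

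I do not anticipate a genuine obstacle here. The one point that deserves a moment's care is the descent from $\tau: L_1\hookrightarrow K_1$ to $\tau: L\hookrightarrow K$, and for that one needs nothing more than the $\sigma_1$-equivariance supplied by Theorem \ref{Existence}, together with the Galois-theoretic descriptions $L=(L_1)^{G(L_1/L)}$ and $K=(K_1)^{G(K_1/K)}$ and the fact that $\sigma_1$ sends $G_K^{\ab}=G(K_1/K)$ into $G_L^{\ab}=G(L_1/L)$.
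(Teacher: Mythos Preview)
Your proof is correct and follows the same approach as the paper, which simply says both statements follow from Theorem \ref{Existence} with $m=1$ together with Remark \ref{EndowedMap} for the second assertion. You have merely made explicit the routine descent from $\tau: L_1 \hookrightarrow K_1$ to $\tau: L \hookrightarrow K$ via the $\sigma_1$-equivariance, which the paper leaves implicit.
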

\begin{proof}
Both statements follow naturally from Theorem \ref{Existence} when $m=1$, together with Remark \ref{EndowedMap} for the second assertion.
\end{proof}

\begin{remark}
It is known from results of Onabe \cite{Onabe}, which were later improved first by Angelakis and Stevenhagen \cite{A-S} and then by Gras \cite{Gras14} that if we were to replace $\sigma_2$ in the above corollary with a homomorphism $\sigma_1: G_K^{\ab} \to G_L^{\ab}$ then, independently of which condition we put on $\sigma_1$, we may not guarantee the existence of an embedding $\tau: L \to K$, as there are non-isomorphic number fields $K, \, L$ such that $G_K^{\ab} \isom G_L^{\ab}$. In this sense, the above corollary may only be improved by asking for a weaker condition.
\end{remark}

\section{Uniqueness in the m-step Hom-Form}

The next results correspond to similar results by Uchida for solvably closed extensions (\cite{UchidaHom}, Lemma 3 and the Corollary immediately following).

\begin{proposition}\label{Lemma3U} 
Let $K$ and $L$ be number fields and assume that $K$ and $L$ are contained in the same separable closure $\Omega$ of $\Q$. Let $m \ge 1$ be an integer, and consider their $m$-step solvably closed extensions $K_{m}$ and $L_{m}$ contained in $\Omega$.
Then, we have that if $K_{m-1}$ is not contained in $L_m$, the composite $K_{m} L_{m}$ is an infinite extension of $L_m$.
\end{proposition}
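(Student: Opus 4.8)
The plan is to prove the contrapositive: assuming that the composite $K_mL_m$ is a \emph{finite} extension of $L_m$, I would deduce $K_{m-1}\subseteq L_m$.

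First I would record the structural fact that $K_m$ is the maximal abelian extension of $K_{m-1}$ inside $\Omega$, and likewise $L_m$ of $L_{m-1}$. Indeed, writing $G(\Omega/K_{m-1})=G_K[m-1]$, one has $G(K_m/K_{m-1})=G_K[m-1]/G_K[m]=(G_K[m-1])^{\ab}$, so $K_m=(K_{m-1})^{\ab}$. I would then note two consequences that will be needed: $L_m\supseteq\Q^{\ab}$ (any abelian extension of $\Q$, composed with $L_{m-1}$, is abelian over $L_{m-1}$), hence $L_m$ contains all roots of unity; and, when $m\ge 2$, $K_{m-1}\supseteq K_1=K^{\ab}\supseteq\Q^{\ab}$, so $K_{m-1}$ also contains all roots of unity.

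Next I would reduce to a statement purely about abelian extensions of $K_{m-1}$. From $K_{m-1}\subseteq K_m$ we get $K_{m-1}L_m\subseteq K_mL_m$, so $[K_{m-1}L_m:L_m]<\infty$. Applying the translation theorem to the abelian — hence Galois — extension $K_m/K_{m-1}$ over the base field $K_{m-1}L_m\supseteq K_{m-1}$ shows that $K_mL_m/K_{m-1}L_m$ is abelian with group $G(K_m/K_m\cap K_{m-1}L_m)$; since $[K_mL_m:K_{m-1}L_m]<\infty$, the subfield $B:=K_m\cap K_{m-1}L_m$ is of finite index in $K_m=(K_{m-1})^{\ab}$. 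Thus $B$ is an abelian extension of $K_{m-1}$ that is cofinite in the \emph{whole} maximal abelian extension of $K_{m-1}$, yet $B$ is contained in $K_{m-1}L_m$, a finite extension of $L_m$. It would then suffice to prove that this is impossible unless $K_{m-1}\subseteq L_m$.

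The heart of the proof — and the step I expect to be the main obstacle — is exactly this last assertion: the maximal abelian extension of $K_{m-1}$ is far too large for a cofinite piece of it to fit inside a finite extension of $L_m$ without $K_{m-1}$ itself already lying in $L_m$. Concretely, I would assume $K_{m-1}\not\subseteq L_m$, fix $\gamma\in K_{m-1}\setminus L_m$ (so $\gamma$ is not an $\ell$-th power in $L_m$ for any prime $\ell$, else $\gamma=\beta^\ell\in L_m$), and for $m\ge 2$ use that $\mu_\ell\subseteq K_{m-1}$: by Kummer theory every $K_{m-1}(\sqrt[\ell]{u})$ with $u\in K_{m-1}^\times$ is cyclic over $K_{m-1}$, hence abelian, hence contained in $K_m$, so $K_mL_m$ contains $L_m(\sqrt[\ell]{u})$ for all primes $\ell$ and all $u\in K_{m-1}^\times$. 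One then has to show that the composite of all these radical extensions is of infinite degree over $L_m$ — using that $K_{m-1}$ has infinitely many finite primes and that radicals of suitably chosen elements (assembled from $\gamma$ and uniformizers at independent primes) cannot all degenerate over the comparatively small field $K_{m-1}L_m$ — which contradicts the finiteness obtained above. The case $m=1$, where $\mu_\infty$ need not lie in $K_0=K$, would be treated by the same mechanism with ray class fields of the number field $K$ in place of Kummer radicals: since $K\not\subseteq L^{\ab}=L_1$, the field $K^{\ab}=K_1$ carries infinitely many independent abelian extensions of $K$ ramified at varying finite primes, and these cannot all be absorbed into $L_1=L^{\ab}$. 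The delicate point throughout is to rule out a wholesale collapse of these extensions upon composition with the enormous field $L_m$, carried out uniformly and with $K_{m-1}$ typically itself an infinite algebraic extension of $\Q$.
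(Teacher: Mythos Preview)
Your reduction is sound: the contrapositive, the translation-theorem computation giving $B=K_m\cap K_{m-1}L_m$ cofinite in $K_m=(K_{m-1})^{\ab}$, and the observation that it suffices to show this forces $K_{m-1}\subseteq L_m$ --- all of this is correct. The problem is the last step. You yourself call it ``the delicate point,'' and you do not actually prove it: you assert that Kummer radicals $\sqrt[\ell]{u}$ (for $m\ge 2$) or ray class fields (for $m=1$) cannot all collapse over $K_{m-1}L_m$, but you give no mechanism. The field $K_{m-1}L_m$ is a finite extension of the \emph{enormous} field $L_m$, and there is no obvious obstruction to a fixed $\gamma\in K_{m-1}\setminus L_m$ being an $\ell$-th power in $L_m(\gamma)$ for all but finitely many primes $\ell$: value-group arguments fail because every rational prime is infinitely ramified in $L_m\supseteq\Q^{\ab}$, so the valuations on $L_m$ have non-discrete value groups. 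Your parenthetical ``so $\gamma$ is not an $\ell$-th power in $L_m$'' is true but beside the point, since $\gamma\notin L_m$ means Kummer theory over $L_m$ does not apply directly to $\gamma$; what you would need is $\gamma\notin (L_m(\gamma)^\times)^\ell$, a different and genuinely hard claim. The appeal to ``uniformizers at independent primes'' of $K_{m-1}$ is likewise problematic, since for $m\ge 2$ the field $K_{m-1}\supseteq K^{\ab}$ has no discrete primes in the usual sense.

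The paper bypasses this difficulty by a completely different mechanism. Rather than Kummer theory over the infinite field $K_{m-1}$, it drops to a \emph{finite} subextension $K'$ of $K_{m-1}/K$ with $K'\not\subseteq L_m$, takes a finite Galois $M/\Q$ containing $K'$ and $L$ with group $H$, and for each prime $p\nmid |H|$ realizes (via \cite{NWS}, Proposition~9.2.9) an extension $M'/\Q$ with $G(M'/M)\cong\F_p[H]$ as $H$-module. The maximal abelian $p$-subextension $N$ of $M'/K'$ corresponds to the $H_1$-coinvariants of $\F_p[H]$ where $H_1=G(M/K')$, and the question of whether $N$ is absorbed into $K'L_m$ becomes the concrete module-theoretic question of whether some abelian extension $N'$ of $L':=K'L\cap L_m$ satisfies $MN'=MN$. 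One checks this fails precisely because $H_2=G(M/L')\not\subseteq H_1$ (equivalent to $K'\not\subseteq L_m$), so the $H_2$-action on $\F_p[H]/A$ is nontrivial. This yields a nontrivial $p$-extension inside $K_mL_m/K'L_m$ for every $p\nmid|H|$, hence infinitude. The argument stays at the level of finite groups and $\F_p[H]$-modules throughout, so the collapse problem you flag never arises.
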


\begin{proof}
Assume $K_{m-1}$ is not contained in $L_m$. Then, there must be a finite extension $K'$ of $K$ contained in $K_{m-1}$ such that $K'$ is not contained in $L_m$. We may take a finite Galois extension $M/\Q$ such that $M$ contains both $K'$ and $L$, and let us denote $H=G(M/\Q)$ and $H_1=G(M/K')$. By (\cite{NWS}, Proposition 9.2.9), if we let $p$ be a prime number such that $p$ does not divide $|H|$, and consider the split group extension 
\[1 \to \F_p[H] \to E \to H \to 1\] there exists a Galois extension $M'$ of $\Q$ containing $M$ such that $G(M'/\Q)$ is isomorphic to $E$ and $G(M'/M)$ is isomorphic to $\F_p[H]$. \\
\begin{center}
\begin{tikzcd}
&[-2 em] &&[-1em] \Omega \arrow[dash]{dll} \arrow[dash]{d} \arrow[dash]{drr}&[-1em]&\\
&K_m \arrow[dash]{ddl} \arrow[dash]{dd}&& M' \arrow[dash, near end, bend left=30]{dd}{\F_p[H]} \arrow[dash]{ddll} \arrow[dash,bend left=45, near start]{dddddd}{E} \arrow[dash,swap]{d}{A} && L_m \arrow[dash]{dddd}\\
&&&MN \arrow[dash]{dll}\arrow[dash]{d}&&\\
K_{m-1} \arrow[dash]{ddr} &N\arrow[dash]{dd}&&M\arrow[dash]{ddll}{H_1} \arrow[dash,swap]{ddrr}{H_2} \arrow[dash]{dddd}{H}&&\\
\\
&K'\arrow[dash]{d}&&&&L'\arrow[dash]{d}\\
&K\arrow[dash]{drr} &&&&L\arrow[dash]{dll}\\
&&& \Q &&\\
\end{tikzcd}
\end{center}
Let $N$ be the maximal abelian $p$-extension of $K$ contained in $M'$, and observe $N$ and $M$ are linearly disjoint extensions of $K'$ as $p$ does not divide $|H|$. \\ Consider the composite $MN \subseteq  M'$. 
Then, $MN$ coincides with the maximal abelian $p$-extension $M^*$ of $M$ contained in $M'$ (which can be obtained as a quotient of $\F_p[H]$) such that the action of $H_1=G(M/K')$ on $G(M^*/M)$ is trivial. We may also observe that $MN$ corresponds to the subgroup $A$ of $\F_p[H]$ determined by all the elements where the action of $H_1$ is not trivial, that is \[G(M'/MN) = A = \sum_{h_1 \in H_1} (h_1-1)\F_p[H].\]
Let us then consider the field $L' = K'L \cap L_m$. Since $K'$ is not contained in $L_m$, it follows that it is also not contained in $L'$. If we consider the subgroup $H_2$ of $H$ corresponding to $L'$, it is not contained in $H_1$, and if we construct the subgroup $A' =\sum_{h_2 \in H_2} (h_2-1)\F_p[H]$ we have that $A'$ is not contained in $A$ and so the action of $H_2$ on $\F_p[H] / A$ is not trivial, and so there is no abelian extension $N'$ of $L'$ such that $N'M=NM$. \\
Consider the Galois group $G(K' L_m /K' L)$, which is isomorphic to $G(L_m/L')$. Since $N$ is an abelian extension of $K' \subseteq K_{m-1}$, we have $N \subseteq K_m$, and immediately we have $N L_m$ is an extension of $L_m$ contained in $K_m L_m$. If we assume that $N L_m$ is contained in $K' L_m$, then $NL$ is a subfield of $NL_m$, and since $N/K'$ is abelian the extension $NL/K'L$ is also abelian. Therefore, by the isomorphism of Galois groups above we can find an abelian extension $N'$ of $L'$ such that $N L = N' K'$. However, this means that $MN$ coincides with $MN'$, which is a composition of $N'$, an abelian extension of $L'$, and $M$. We then get a contradiction, and so $NL_m$ is a non-trivial abelian $p$-extension of $K' L_m$ contained in $K_m L_m$. \\
It now follows that $K_m L_m$ contains an extension of $L_m$ whose degree is a multiple of $p$, and repeating this argument for the infinitely many primes $p$ not dividing $|H|$ we get that $K_m L_m$ must necessarily be an infinite extension of $L_m$.
\end{proof}

\begin{corollary}\label{Cor3U}
Let $m \ge 1$ be a positive integer, $K$ and $L$ be number fields contained in a same separable closure $\Omega$ of $\Q$, and consider their respective maximal $m$-step solvable extension $K_m$ and $L_m$ contained in $\Omega$. Assume that there exists a number field $M$ such that $M K_m = M L_m$. Then, $K_{m-1}$ is contained in $L_m$. In particular, $K \subseteq L_m$.
\end{corollary}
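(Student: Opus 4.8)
The plan is to argue by contradiction, using Proposition \ref{Lemma3U} as the main input. Suppose $K_{m-1}$ is \emph{not} contained in $L_m$. Then Proposition \ref{Lemma3U} tells us that the composite $K_m L_m$ is an infinite extension of $L_m$. I will contradict this by showing that the hypothesis $M K_m = M L_m$ forces $K_m L_m / L_m$ to be finite.

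Concretely, the first step is to observe the chain of inclusions $K_m \subseteq M K_m = M L_m$ and $L_m \subseteq M L_m$, which together give $K_m L_m \subseteq M L_m$. The second step is a degree bound: since $M$ is a number field, $[M:\Q]$ is finite, and hence $[M L_m : L_m] \le [M : M \cap L_m] \le [M:\Q] < \infty$. Therefore $K_m L_m$, being an intermediate field of $M L_m / L_m$, is a \emph{finite} extension of $L_m$, contradicting the conclusion of Proposition \ref{Lemma3U}. This forces $K_{m-1} \subseteq L_m$.

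For the final assertion, since $m \ge 1$ we have the tower $K = K_0 \subseteq K_1 \subseteq \cdots \subseteq K_{m-1}$, so $K \subseteq K_{m-1} \subseteq L_m$, as claimed.

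I do not expect any serious obstacle here: the substantive work is entirely contained in Proposition \ref{Lemma3U}, and the corollary is a short deduction. The only point needing a modicum of care is verifying the inclusion $K_m L_m \subseteq M L_m$ and the finiteness of $M L_m$ over $L_m$, both of which are routine.
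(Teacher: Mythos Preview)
Your proposal is correct and follows essentially the same approach as the paper: show $K_m L_m \subseteq M L_m$ from the hypothesis, note that $M L_m/L_m$ is finite since $M$ is a number field, and then invoke (the contrapositive of) Proposition~\ref{Lemma3U}. The only cosmetic difference is that you phrase it as a proof by contradiction while the paper applies the contrapositive directly.
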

\begin{proof}
Since $M K_m = M L_m \supseteq K_m$, we get that $K_m L_m$ is contained in $M L_m$, and is therefore a finite extension of $L_m$. By Proposition \ref{Lemma3U}, we get that this must necessarily mean $K_{m-1}$ is contained in $L_m$. \\
The second assertion follows immediately as $K \subseteq K_{m-1}$
\end{proof}

In the rest of this section, we will consider a homomorphism of profinite groups \[\sigma_m: G_K^m \to G_L^m\] such that $H = \sigma_m(G_K^m)$ is an open subgroup of $G_L^m$, and that there exists an embedding of fields \[\tau: L_m \inj K_m \] such that $ \tau \sigma_m(g) = g \tau$ for all $g \in G_K^m$. \\
We will denote the finite extension of $L$ contained in $L_m$ corresponding to $H$ by $\widetilde{L}$. We will also denote the subfield of $K_m$ corresponding to the kernel of $\sigma_m$ by $\Lambda_m$. For all $1 \le i \le m-1$, we may consider the homomorphisms $\sigma_{i}: G_K^i \to G_L^i$ induced as in Proposition \ref{Diagram}, we will denote the field corresponding to $\ker(\sigma_i)$ by $\Lambda_{i}$. \\
Observe that, by definition $\tau(\widetilde{L})$ is fixed under the action of all $g \in G_K^m$, and so $\tau(\widetilde{L}) \subseteq K$. It is also the maximal extension of $\tau(L)$ contained in $\tau(L_m)$ with this property, so it follows that $\tau(L_m) \cap K = \tau(\widetilde{L})$, and $K \tau(L_m) = \Lambda_m$. \\
Finally, observe that as $\tau$ gives an isomorphism between $L_m/L$ and $\tau(L_m)/\tau(L)$, we have $\tau(L_m) = \tau(L)_m$. \\
In the following statement, we will require the same property we needed to show Proposition \ref{isopen}, namely $\widetilde{L} \subseteq L_{m-1}$

\begin{theorem}\label{Unique}
Let $m \ge 2$ be a positive integer, $\sigma_m: G_K^m \to G_L^m$ a homomorphism of profinite groups, and assume that $\sigma_m$ has open image and $\sigma_m(G_K^m)$ contains $G_L^m[m-1]$. Then, if $\tau$ and $\rho$ are homomorphisms $\tau, \rho: L_m \inj K_m$ such that $\tau \sigma_m(g) = g \tau$ and $\rho \sigma_m(g) = g \rho$, we have $\tau=\rho$, that is, the property $\tau \sigma_m(g) = g \tau$ determines $\tau$ uniquely.
\end{theorem}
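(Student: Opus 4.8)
Set $\widetilde L=(L_m)^{\sigma_m(G_K^m)}$ and let $\Lambda_m\subseteq K_m$ be the fixed field of $\ker\sigma_m$, so that (as recorded before the statement) $K\tau(L_m)=K\rho(L_m)=\Lambda_m$ and $\tau(\widetilde L),\rho(\widetilde L)\subseteq K$; the hypothesis $\sigma_m(G_K^m)\supseteq G_L^m[m-1]$ says exactly that $\widetilde L\subseteq L_{m-1}$. The first step is to descend one abelian layer. From $\sigma_m(G_K^m[m-1])\subseteq G_L^m[m-1]$ we get $G_K^m[m-1]\subseteq\sigma_m^{-1}(G_L^m[m-1])$, so the fixed field $\Lambda^{(m-1)}$ of $\sigma_m^{-1}(G_L^m[m-1])$ lies in $K_{m-1}$; and since every element of $G_L^m[m-1]$ is of the form $\sigma_m(g)$, the relation $g\tau=\tau\sigma_m(g)$ forces $\tau(L_{m-1})\subseteq\Lambda^{(m-1)}\subseteq K_{m-1}$ (and likewise for $\rho$), with $K\tau(L_{m-1})=K\rho(L_{m-1})=\Lambda^{(m-1)}$, so that $\tau|_{L_{m-1}},\rho|_{L_{m-1}}\colon L_{m-1}\to K_{m-1}$ are $\sigma_{m-1}$-equivariant embeddings.

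The second step pins these down. The images $\tau(L_{m-1}),\rho(L_{m-1})$ are the maximal $(m-1)$-step solvable extensions of the number fields $\tau(L),\rho(L)\subseteq K$, and $K\tau(L)_{m}=K\rho(L)_{m}=\Lambda_m$, $K\tau(L)_{m-1}=K\rho(L)_{m-1}=\Lambda^{(m-1)}$. Feeding this into Corollary \ref{Cor3U} (with the number field $K$ in the role of the auxiliary field $M$ there) gives mutual inclusions between the $\tau(L)_{i}$ and the $\rho(L)_{i}$; combining these with $[\tau(L):\Q]=[\rho(L):\Q]=[L:\Q]$, and with the canonical $\sigma_m$-equivariant map of primes $\Primes_{K_m}\to\Primes_{L_m}$ attached to an equivariant embedding -- using Proposition \ref{mSeparatedness} to see that $\tau$ and $\rho$ carry any prime $\mathfrak P$ with $\sigma_m(D_{\mathfrak P})\neq 1$ (all but a thin set, as $\Lambda_m/K$ is infinite) to primes lying over one and the same prime of $L_{m-1}$, and then a Chebotarev argument (a $\Q$-embedding of number fields is determined by its effect on a density-one set of primes) -- one obtains $\tau|_{L_{m-1}}=\rho|_{L_{m-1}}$. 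In particular $\tau(L_m)=\rho(L_m)$, and $\omega:=\tau^{-1}\circ\rho$ is a well-defined automorphism of $L_m$ fixing $L_{m-1}$ pointwise, i.e. $\omega\in G(L_m/L_{m-1})=G_L^m[m-1]$.

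Substituting $\rho=\tau\omega$ into $g\tau=\tau\sigma_m(g)$ and $g\rho=\rho\sigma_m(g)$ and cancelling $\tau$ yields $\sigma_m(g)\,\omega=\omega\,\sigma_m(g)$ for every $g\in G_K^m$; thus $\omega$ centralises $H:=\sigma_m(G_K^m)$, and since $H\supseteq G_L^m[m-1]\ni\omega$ this places $\omega$ in $Z(H)\cap G_L^m[m-1]$, equivalently exhibits $\omega$ as a $G(L_{m-1}/\widetilde L)$-invariant of $G_{L_{m-1}}^{\ab}$. The proof then finishes once one shows this forces $\omega=1$, i.e. that no non-trivial element of $G(L_m/L_{m-1})$ commutes with all of $H=G(L_m/\widetilde L)$. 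I expect this to be the main obstacle: it is the $m$-step analogue of the centre-triviality underlying the uniqueness parts of the Neukirch--Uchida and Sa\"idi--Tamagawa theorems, and the natural route is a decomposition-group/Chebotarev argument -- an $\omega$ as above normalises every decomposition group contained in $H$, decomposition groups are self-normalising, and distinct primes of $L_m$ have distinct decomposition groups with trivial common intersection, whence $\omega=1$ and $\tau=\rho$. Note that the hypothesis $\sigma_m(G_K^m)\supseteq G_L^m[m-1]$ is used twice in an essential way: it is what allows $\tau$ to restrict to an embedding $L_{m-1}\hookrightarrow K_{m-1}$ at all, and it is what places $\omega$ inside the open subgroup $H$, so that only centre-triviality relative to $H$ is needed.
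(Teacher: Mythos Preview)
Your overall architecture is right and matches the paper: both arguments reduce to showing that an automorphism of $\tau(L_m)$ (equivalently of $L_m$) that commutes with every element of $H=\sigma_m(G_K^m)=G(L_m/\widetilde L)$ must be trivial. The paper finishes that last step by a direct citation of \cite{S-T}, Corollary~1.7, rather than by your ad hoc ``decomposition groups are self--normalising'' sketch; your sketch is morally the content of that corollary, so there is no real saving in reproving it.

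Where your route diverges from the paper is in the middle step, and there your proposal does more work than is needed and leaves a genuine gap. You aim for the strong statement $\tau|_{L_{m-1}}=\rho|_{L_{m-1}}$ (equality of \emph{maps}) via a prime--matching/Chebotarev argument. But the paper never proves this; it only proves the weaker field equalities $\tau(\widetilde L)=\rho(\widetilde L)$, then $\tau(L_{m-1})=\rho(L_{m-1})$, and finally $\tau(L_m)=\rho(L_m)$, by pure field theory: one application of Corollary~\ref{Cor3U} gives $\tau(L_{m-1})\subseteq\rho(L_m)$, intersecting with $K$ and comparing degrees gives $\tau(\widetilde L)=\rho(\widetilde L)$, and then the linear disjointness of $K$ and $\rho(L_m)$ over $\rho(\widetilde L)$ yields $K\rho(L_{m-1})\cap\rho(L_m)=\rho(L_{m-1})\supseteq\tau(L_{m-1})$, hence equality by symmetry. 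Once $\tau(L_m)=\rho(L_m)$, the composite $\tau\rho^{-1}$ is already a well-defined automorphism of $\tau(L_m)$ centralising $G(\tau(L_m)/\tau(\widetilde L))$, and the cited corollary ends the proof.

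Your Chebotarev step, by contrast, is incomplete as written: from $\tau(L_{m-1})=\rho(L_{m-1})$ you only get an automorphism $\omega'=(\tau|_{L_{m-1}})^{-1}\circ\rho|_{L_{m-1}}$ of $L_{m-1}$ over~$\Q$, not a priori over $L$ or even over $\widetilde L$; the slogan ``a $\Q$--embedding is determined by its effect on a density--one set of primes'' does not literally apply to automorphisms of the infinite extension $L_{m-1}/\Q$, and making it work requires first descending $\omega'$ to a fixed number field, which is exactly the sort of step the paper's disjointness argument handles cleanly. So the detour is both unnecessary and the place where your sketch is weakest; replacing your second paragraph by the two--line disjointness computation above closes the gap.
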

\begin{proof}
First, observe that $\tau(L_m)$ and $\rho(L_m)$ are both contained in $K_m$. \\
Since $K \tau(L_m) = \Lambda_m = K \rho(L_m)$, by Corollary \ref{Cor3U} we have $\tau(L_{m-1}) \subseteq \rho(L_m)$. Since by assumption $\widetilde{L} \subset L_{m-1}$, we get $\tau(\widetilde{L}) \subset \tau(L_{m-1}) \subset \rho(L_m)$. By construction $\tau(\widetilde{L}) \subseteq K$, and furthermore $\tau(L_{m-1}) \cap K = \tau(\widetilde{L})$. From the above inclusion we then obtain $\tau(\widetilde{L}) \subseteq \rho(\widetilde{L}) = \rho(L_m) \cap K$. However, since $\tau(\widetilde{L})$ and $\rho(\widetilde{L})$ have the same degree, it follows they are equal. \\
Now, observe that $K$ and $\rho(L_m)$ are disjoint extensions of $\rho(\widetilde{L})$, and so the natural isomorphism $G(\Lambda_m/K) \isom G(\rho(L_m)/\rho(\widetilde{L}))$ gives us $K \rho(L_{m-1}) \cap \rho(L_m) = \rho(L_{m-1})$. \\ However, $K \rho(L_{m-1}) = \Lambda_{m-1} = K \tau(L_{m-1})$, and as $K \rho(L_{m-1}) \cap \rho(L_m) \supseteq \tau(L_{m-1})$, we get $\rho(L_{m-1}) \supseteq \tau(L_{m-1})$. We may also switch $\tau$ and $\rho$ to obtain an equality. \\
It now follows that $\rho(L_m) = \tau(L_m)$ as well, as they are maximal abelian extensions of the same field in the same separable closure, and so the composition $\tau \rho^{-1}$ is an automorphism of $\tau(L_m)$. \\ 
Let us then take an element $g \in G_K^m$. By compatibility property in the statement, we get $\tau \rho^{-1} g = \tau \sigma_m(g) \rho^{-1} = g \tau \rho^{-1}$, that is $\tau \rho^{-1}$ commutes with every element $g \in G_K^m$. Then $\tau \rho^{-1}$ also commutes with the image of $g$ by the surjective homomorphism $G_K^m \surj G(\tau(L_m)/\tau(\widetilde{L}))$ for all $g \in G_K^m$, which implies $\tau \rho^{-1}$ centralises $G(\tau(L_m)/\tau(\widetilde{L}))$. However, by (\cite{S-T}, Corollary 1.7), we finally get $\tau \rho^{-1}$ must be the identity, and so $\tau = \rho$ as desired.
\end{proof}

An immediate case where the condition we ask for in the theorem above is satisfied is when $\sigma_m$ is surjective. 

\begin{proposition}\label{Large}
Let $K,L$ be number fields. Then, there exists an integer $m' \ge 0$ such that for all integers $m > m'$ the following holds: \\ Assume we have a homomorphism of profinite groups $\sigma_m: G_K^m \to G_L^m$ with open image and an embedding $\tau: L_m \inj K_m$ an embedding of fields with the property $\tau \sigma_m(g) = g \tau$, then $\tau$ is uniquely determined by the latter property. 
\end{proposition}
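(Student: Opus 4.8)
The plan is to reduce the statement to Theorem~\ref{Unique}: I will show that for all $m$ larger than some $m'$ depending only on $K$ and $L$, the existence of any compatible embedding $\tau\colon L_m \inj K_m$ already forces $\sigma_m(G_K^m)$ to contain $G_L^m[m-1]$, after which the conclusion is exactly Theorem~\ref{Unique}.

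The first step is to extract a uniform bound on $[\widetilde L:L]$, where (as in the discussion preceding Theorem~\ref{Unique}) $\widetilde L$ is the finite subextension of $L_m/L$ fixed by $H:=\sigma_m(G_K^m)$ --- finite because $H$ is open. Since $H\subseteq G_L^m$ fixes $L$, we have $L\subseteq\widetilde L$. Since $\sigma_m(g)$ acts trivially on $\widetilde L$ for every $g\in G_K^m$ while $\tau\sigma_m(g)=g\tau$, each such $g$ fixes $\tau(\widetilde L)$ pointwise, so $\tau(\widetilde L)\subseteq (K_m)^{G_K^m}=K$. As $\tau$ fixes $\Q$, this gives $[\widetilde L:L]\le[\widetilde L:\Q]=[\tau(\widetilde L):\Q]\le[K:\Q]=:d$, a bound that does not depend on $m$ or on $\sigma_m$.

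The second step bounds the ``solvability level'' of $\widetilde L$ inside the tower. Since $L_m/L$ is Galois, the Galois closure $M$ of $\widetilde L/L$ lies in $L_m$, and $G(M/L)$ embeds in the symmetric group on the $[\widetilde L:L]$ embeddings of $\widetilde L$ over $L$, so $[M:L]\le d!$; moreover $G(M/L)$ is a quotient of the solvable group $G_L^m=G_L/G_L[m]$, hence solvable. Let $\ell=\ell(d)$ be the largest derived length occurring among (the finitely many) solvable groups of order at most $d!$ --- a finite quantity depending only on $K$ and $L$. Writing $M$ as the fixed field of a closed normal subgroup $N$ of $G_L$, the surjection $G_L\surj G_L/N\cong G(M/L)$ carries $G_L[\ell]$ onto $(G_L/N)[\ell]=1$, so $G_L[\ell]\subseteq N$; passing to fixed fields, $M\subseteq L_\ell$, hence $\widetilde L\subseteq L_\ell$.

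Finally, set $m'=\max(1,\ell)$. For every $m>m'$ we have $m\ge 2$ and $\ell\le m-1$, so $\widetilde L\subseteq L_\ell\subseteq L_{m-1}$, which under the Galois correspondence says precisely $\sigma_m(G_K^m)=H\supseteq G(L_m/L_{m-1})=G_L^m[m-1]$. Together with the hypothesis that $\sigma_m$ has open image, this is exactly the setting of Theorem~\ref{Unique}, which then yields that $\tau$ is the unique embedding $L_m\inj K_m$ satisfying $\tau\sigma_m(g)=g\tau$. I do not expect a genuine obstacle here: the proof is largely bookkeeping, and its only real content is the uniform bound $[\widetilde L:L]\le d$ obtained from the existence of $\tau$; the one point to handle with care is that $\widetilde L$ need not be Galois over $L$, which is why the passage to its Galois closure --- and the solvability of $G_L^m$ --- is needed in the second step.
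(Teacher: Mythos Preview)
Your proof is correct, and the reduction to Theorem~\ref{Unique} via the inclusion $\widetilde L\subseteq L_{m-1}$ is exactly the right target. Your argument differs from the paper's in how that inclusion is obtained. You bound $[\widetilde L:L]\le[K:\Q]$ from $\tau(\widetilde L)\subseteq K$, then pass to the Galois closure of $\widetilde L/L$ inside $L_m$ and bound its derived length by the crude estimate coming from its order ($\le d!$). The paper instead works on the $K$ side: it fixes an embedding $\tau'\colon L\inj K$, takes the Galois closure $\widehat K$ of $K$ over $\Q$, and observes that $\tau(\widetilde L)$ lies in the maximal solvable subextension $M$ of $\widehat K/\tau(L)$; the derived length of $G(M/\tau(L))$ is then bounded in terms of $[\widehat K:\Q]/[L:\Q]$. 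Both routes are short, but they buy different things: the paper's version yields the explicit and much sharper bound $m'=\lfloor\log_2([\widehat K:\Q]/[L:\Q])\rfloor$ recorded in Remark~\ref{ForceLarge} and used in Theorem~\ref{Variation}, whereas your bound is roughly $\log_2([K:\Q]!)$. On the other hand, your argument is slightly cleaner in that $m'$ is defined directly from $[K:\Q]$ and you never need to fix an auxiliary embedding $\tau'$ or invoke the ``without loss of generality'' that $L$ embeds in $K$.
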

\begin{proof}
Let us assume without loss of generality that there exists an embedding $\tau': L \inj K$, and let $\widehat{K}$ be the Galois closure of $K$ over $\Q$. Observe that we have a finite Galois extension $\widehat{K}/\tau'(L)$, and set $G=G(\widehat{K}/\tau'(L))$. Then, we may consider the maximal solvable quotient $G_{\sol}$ of $G$, which correspond to an extension $M'/\tau'(L)$. By construction, $M'$ is the maximal solvable extension of $\tau'(L)$ contained in $\widehat{K}$. \\
Observe now that since $M'/\tau'(L)$, is a solvable extension, there exists an integer $m'$ such that $M' \subset \tau'(L)_{m'}$ where $\tau'(L)_{m'}$ is the maximal $m'$-step solvable extension of $\tau'(L)$ contained in $K_{m'}$. By maximality of $G^{\sol}$, we have $M' = \widehat{K} \cap \tau'(L)_{m'}$, and so $M' \supseteq K \cap \tau'(L)_{m'}$. \\
We now apply the above construction to the following case: let us take any integer $m \ge m'$, and a homomorphism of profinite groups $G_K^m \to G_L^m$ such that there exists an embedding $\tau: L_m \to K_m$ compatible with $\sigma_m$ as in the statement. Denote by $\widetilde{L}$ the subfield of $L_m/L$ corresponding to the image. By construction, $\tau(\widetilde{L}) = \tau(L_m) \cap K$, furthermore, since $m' < m$, we may say $\tau(L)_{m'}$ coincides with $\tau(L_{m'})$. \\ Observe that $\tau(\widetilde{L})$ is a solvable extension of $\tau(L)$ contained in $K$, so if we construct the extension $M$ of $\tau(L)$ analogously to the construction of the extension $M'/\tau'(L)$ above, by maximality $M \supseteq \tau(\widetilde{L})$, and we have \[\tau(L_m') \supset M \supset \tau(\widetilde{L}).\]
Since $m > m'$, we get indeed that $\widetilde{L} \subset L_{m'} \subset L_{m-1}$ and we are in the conditions to apply Theorem \ref{Unique}. The statement now follows.
\end{proof}

\begin{remark}\label{ForceLarge}
Note that in the above proof $m'$ depends on $K$ and $L$. Using the notations of Proposition \ref{Large} and its proof, it is possible to give an estimate of an upper bound for $m'$ independently of $\sigma_m$ and $\tau$ as follows: \\ Let $n = [\widehat{K}:\Q]$, and $n' = [L:\Q]$. Then, we have immediately $|G|=n/n'$, and $|G^{\sol}| \le |G|$. However, as $G^{\sol}$ is solvable and every non-trivial abelian extension has degree at least $2$, we can give an upper bound for its derived length by $\lfloor \log_2(|G^{\sol}|) \rfloor$, which by the above discussion is $\le \lfloor n/n' \rfloor$. Therefore, $m' = \lfloor \log_2([\widehat{K}:\Q]/[L:\Q]) \rfloor$ is a valid upper bound for which $m > m'$ guarantees uniqueness. \end{remark}

\section{A conditonal m-Step Hom Form}

In this brief section, we state and prove our main result, which consists in a formulation of a conditional $m$-Step Hom-Form for Number Fields.

\begin{theorem}\label{Main}
Let $K,L$ be number fields, $m \ge 2$ an integer, and $\sigma_{m+3}: G_K^{m+3} \to G_L^{m+3}$ a homomorphism of profinite groups. Consider the homomorphism of profinite groups $\sigma_m: G_K^m \to G_L^m$ induced by $\sigma_{m+3}$. Assume that $\sigma_m$ satisfies condition $(\dagger)$ and that $\sigma_{m+3}(G_K^{m+3}) \supseteq G_L^{m+3}[m-1]$. \\  Then, $\sigma_m(G_K^m)$ is open in $G_L^m$, and there exists a unique embedding of fields $\tau: L_m \inj K_m$ such that for all $g \in G_K^m$ we have $\tau \sigma_m(g) = g \tau$. Furthermore, $\tau$ restricts to an embedding $L \inj K$.
\end{theorem}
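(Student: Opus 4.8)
The plan is to assemble, in the right order, the machinery of Sections~2--5: propagate condition $(\dagger)$ from $\sigma_m$ up to the intermediate quotients, then feed the resulting data into Proposition~\ref{isopen} for openness, into Theorem~\ref{Existence} for the existence of $\tau$, and into Theorem~\ref{Unique} for uniqueness.

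First I would record that, since $\sigma_{m+3}$ is given, each of $\sigma_{m+2},\sigma_{m+1},\sigma_m$ is induced from it by Proposition~\ref{Diagram}, and that the hypothesis ``$\sigma_m$ satisfies $(\dagger)$'' forces $\sigma_{m+1}$ and $\sigma_{m+2}$ to satisfy $(\dagger)$ as well. The argument is the upward counterpart of Proposition~\ref{daggerldown}: given a $(\star_l)$-subgroup $F\subseteq G_K^{m+1}$, its image $\overline{F}\subseteq G_K^m$ is again a $(\star_l)$-subgroup (Proposition~\ref{starldown}); since $\sigma_m$ satisfies $(\dagger_l)$ it restricts to an injection on $\overline{F}$ (Remark~\ref{groupdaggerl}), so by Proposition~\ref{starllift} (applied via $\sigma_{m+2}$) $\sigma_{m+1}$ restricts to an injection on $F$, and then $\sigma_{m+1}(F)$ is a $(\star_l)$-subgroup by Proposition~\ref{MapStarl}. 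Running the same step once more, with $\sigma_{m+3}$ in place of $\sigma_{m+2}$, gives that $\sigma_{m+2}$ satisfies $(\dagger)$ (this is just Remark~\ref{localinj2} made explicit: whether a $(\star_l)$-subgroup over a given prime is mapped injectively does not depend on the number of steps). It also follows, by Remark~\ref{EndowedMap}, that $\sigma_{m+1}$ satisfies $(\dagger^+)$.

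Next I would translate the hypothesis $\sigma_{m+3}(G_K^{m+3})\supseteq G_L^{m+3}[m-1]$ into field theory. Pushing it forward along the surjections $G_L^{m+3}\twoheadrightarrow G_L^{j}$ for $m-1\le j\le m+3$, which carry $\sigma_{m+3}(G_K^{m+3})$ onto $\sigma_{j}(G_K^{j})$ and the $(m-1)$-st derived subgroup onto the $(m-1)$-st derived subgroup, yields $\sigma_{j}(G_K^{j})\supseteq G_L^{j}[m-1]$; equivalently, the subextension $\widetilde{L}$ of $L_j/L$ fixed by $\sigma_j(G_K^j)$ satisfies $\widetilde{L}\subseteq L_{m-1}$. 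Taking $j=m+2$ and using that $\sigma_{m+2}$ satisfies $(\dagger)$, Proposition~\ref{isopen} shows $\sigma_m$ has open image. Taking $j=m+1$, since $\sigma_{m+1}$ satisfies $(\dagger^+)$, Theorem~\ref{Existence} produces an embedding $\tau:L_m\inj K_m$ with $\tau\sigma_m(g)=g\tau$ for all $g\in G_K^m$; and because $\tau(\widetilde{L})$ is fixed under the action of every $g\in G_K^m$ while $L\subseteq\widetilde{L}$, this $\tau$ restricts to an embedding $L\inj K$. Finally, taking $j=m$ gives $\sigma_m(G_K^m)\supseteq G_L^m[m-1]$, so Theorem~\ref{Unique} applies and $\tau$ is the unique embedding with this compatibility property.

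I expect the only genuinely delicate point to be the upward propagation of $(\dagger)$ in the second paragraph — this is precisely why the theorem is stated with $\sigma_{m+3}$ rather than with $\sigma_m$ alone, since the chain of Propositions~\ref{starldown}, \ref{starllift}, \ref{MapStarl} consumes two extra levels and Remark~\ref{EndowedMap} a third. Once that is in place, the remaining steps amount to supplying the right level and the right hypothesis to Proposition~\ref{isopen}, Theorem~\ref{Existence} and Theorem~\ref{Unique}, which were set up for exactly this purpose.
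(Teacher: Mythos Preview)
Your proposal is correct and follows essentially the same approach as the paper's own proof: propagate $(\dagger)$ upward from $\sigma_m$ to $\sigma_{m+2}$ via Propositions~\ref{starldown}, \ref{starllift}, \ref{MapStarl} (using the extra level $\sigma_{m+3}$), deduce $(\dagger^+)$ for $\sigma_{m+1}$ via Remark~\ref{EndowedMap}, then invoke Proposition~\ref{isopen}, Theorem~\ref{Existence} and Theorem~\ref{Unique} in turn. Your write-up is more explicit than the paper's (particularly in pushing the image hypothesis down along the quotients $G_L^{m+3}\twoheadrightarrow G_L^j$ and in justifying the restriction $L\hookrightarrow K$), but the logical skeleton is identical.
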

\begin{proof}
Observe first that since $\sigma_m$ satisfies condition $(\dagger)$, the homomorphism $\sigma_{m+2}: G_K^{m+2} \to G_L^{m+2}$ determined by $\sigma_{m+3}$ satisfies condition $(\dagger)$ by Proposition \ref{starllift}, and $\sigma_{m+1}$ satisfies condition $(\dagger^+)$. The existence of $\tau_m$ now follows from Theorem \ref{Existence}. \\
By Proposition \ref{isopen} it also follows $\sigma_m$ has open image (as the projection are continuous, it also follows that $\sigma_{m+3}$ has open image). We are then in the conditions to apply Theorem \ref{Unique} as well, that is $\tau_m$ is uniquely determined. The statement now follows immediately.
\end{proof}

We can also give the following variation. The proof of the following Theorem is very similar to the one of Theorem \ref{Main}, with the inclusion of Proposition \ref{Large} and the argument in Remark \ref{ForceLarge}, so we will omit it.

\begin{theorem}\label{Variation}
Let $K,L$ be number fields, $\widehat{K}$ a Galois closure over $\Q$ of $K$, and $m \ge \max\{2;\lfloor log_2(\frac{[\widehat{K}:\Q]}{[L:\Q]}) \rfloor + 1\}$ an integer. Let $\sigma_{m+3}: G_K^{m+3} \to G_L^{m+3}$ be a homomorphism of profinite groups. Consider the homomorphism of profinite groups $\sigma_m: G_K^m \to G_L^m$ induced by $\sigma_{m+3}$ and assume that $\sigma_m$ satisfies condition $(\dagger)$. Then, $\sigma_m(G_K^m)$ is open in $G_L^m$, and there exists a unique embedding of fields $\tau: L_m \inj K_m$ such that for all $g \in G_K^m$ we have $\tau \sigma_m(g) = g \tau$. Furthermore, $\tau$ restricts to an embedding $L \inj K$.
\end{theorem}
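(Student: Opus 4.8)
The plan is to run the proof of Theorem~\ref{Main} essentially verbatim for the existence of $\tau$, and then to replace the explicit hypothesis $\sigma_{m+3}(G_K^{m+3}) \supseteq G_L^{m+3}[m-1]$ of Theorem~\ref{Main} by the counting argument of Proposition~\ref{Large} and Remark~\ref{ForceLarge}, which is exactly what the lower bound on $m$ is designed to supply. First I would propagate the condition $(\dagger)$ upwards: since $\sigma_m$ satisfies $(\dagger)$, Proposition~\ref{starllift} shows that the homomorphisms $\sigma_{m+1}$ and $\sigma_{m+2}$ induced by $\sigma_{m+3}$ also satisfy $(\dagger)$, and hence $\sigma_{m+1}$ satisfies $(\dagger^+)$ by Remark~\ref{EndowedMap}. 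Applying Theorem~\ref{Existence} to $\sigma_{m+1}$ then yields an embedding of fields $\tau: L_m \inj K_m$ with $\tau\sigma_m(g) = g\tau$ for all $g \in G_K^m$; by Remark~\ref{RemarkOpen} the existence of such a $\tau$ already forces $\sigma_m$ to have open image, and, writing $\widetilde{L}$ for the subfield of $L_m/L$ corresponding to $\sigma_m(G_K^m)$, one has $L \subseteq \widetilde{L}$ and $\tau(\widetilde{L}) = \tau(L_m) \cap K \subseteq K$, so $\tau$ restricts to an embedding $L \inj K$.

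For uniqueness it suffices, by Theorem~\ref{Unique}, to verify the condition $\widetilde{L} \subseteq L_{m-1}$ (equivalently $\sigma_m(G_K^m) \supseteq G_L^m[m-1]$). Here I would use $\tau$ to identify $L_m$ with its image $\tau(L_m) \subseteq K_m \subseteq \overline{\Q}$; then $\widetilde{L}$ becomes the subfield $\tau(L_m) \cap K$ of $K$, it is an $m$-step solvable (in particular solvable) extension of $L$, and $L \subseteq \widetilde{L} \subseteq K \subseteq \widehat{K}$. Let $M$ be the maximal solvable subextension of $\widehat{K}/L$, so $\widetilde{L} \subseteq M$ and $G(M/L)$ is a finite solvable group with $|G(M/L)| = [M:L] \le [\widehat{K}:L] = [\widehat{K}:\Q]/[L:\Q]$. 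Since each step of the derived series of a finite solvable group contributes a factor at least $2$ to the order, the derived length of $G(M/L)$ is at most $\lfloor \log_2([\widehat{K}:\Q]/[L:\Q]) \rfloor$, and therefore $\widetilde{L} \subseteq M \subseteq L_{\lfloor \log_2([\widehat{K}:\Q]/[L:\Q]) \rfloor}$. The hypothesis $m \ge \lfloor \log_2([\widehat{K}:\Q]/[L:\Q]) \rfloor + 1$ then gives $\widetilde{L} \subseteq L_{m-1}$, so Theorem~\ref{Unique} applies and $\tau$ is the unique embedding with the stated compatibility.

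The substantive content is thus imported wholesale from Theorem~\ref{Existence} (existence) and Theorem~\ref{Unique} (uniqueness); the only genuinely new ingredient is the elementary derived-length estimate. Accordingly, the point requiring the most care is not any deep argument but the bookkeeping: one must check that, after identifying $L_m$ with $\tau(L_m)$, the field $\widetilde{L}$ really does sit inside the Galois closure $\widehat{K}$ of $K$, that the index bound $[\widehat{K}:L] = [\widehat{K}:\Q]/[L:\Q]$ is then legitimate, and that ``$G(M/L)$ has derived length $\le d$'' translates correctly into ``$M \subseteq L_d$'' (using that $G_L[d]$ is the closure of the $d$-th derived subgroup of $G_L$ and that $G(M/L)$ is finite). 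Once these are in place the matching $m-1 \ge \lfloor \log_2([\widehat{K}:\Q]/[L:\Q]) \rfloor$ closes the argument.
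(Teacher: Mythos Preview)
Your proposal is correct and matches the paper's intended argument: the paper explicitly omits the proof, stating only that it is ``very similar to the one of Theorem~\ref{Main}, with the inclusion of Proposition~\ref{Large} and the argument in Remark~\ref{ForceLarge}''. You have carried out exactly this plan---propagating $(\dagger)$ to $\sigma_{m+1}$ and $\sigma_{m+2}$, invoking Theorem~\ref{Existence} for existence, using Remark~\ref{RemarkOpen} for openness, and then running the derived-length estimate of Remark~\ref{ForceLarge} on $G(\widehat{K}/\tau(L))$ to force $\widetilde{L}\subseteq L_{m-1}$ so that Theorem~\ref{Unique} applies.
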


\begin{remark}
The conditions we ask for in Theorems \ref{Main} are group-theoretic, as $(\star_l)$-subgroups can be recovered without looking at decomposition groups as in Definition \ref{starldef}, and the condition $\sigma_{m+3}(G_K^{m+3}) \supseteq G_L^{m+3}[m-1]$ is also group-theoretic. 
\end{remark}

\begin{remark}\label{Sharp}
Let us take a homomorphism $\sigma: G_K \to G_L$ satisfying the condition Uchida asks for in his conditional result (\cite{UchidaHom}, Theorem 2), namely that for every prime $\p \in \Primes_{\K}$ with decomposition group $D_{\p} \subset G_K^m$, there exists a unique prime $\q \in \Primes_{\Lb}$ so that $\sigma(D_{\p})$ is an open subgroup of $D_{\q}$, and let us consider the homomorphism $\sigma_m: G_K^m \to G_L^m$ induced by $\sigma$ as in Proposition \ref{Diagram}. \\
Lemma 4 in \cite{UchidaHom} shows that primes $\p$ and $\q$ as above have the same residue characteristic $p$. Taking $l \neq p$, and appropriate $l$-Sylow subgroups $D_{\p,l}$ and $D_{\q,l}$, the condition that $\sigma(D_{\p})$ is an open subgroup of $D_{\q}$ implies $\sigma(D_{\p,l})$ is an open subgroup of $D_{\q,l}$, and so it is necessarily mapped injectively by $\sigma$. Furthermore, \cite{S-T}, Proposition 1.1.(vii) shows that $D_{\p,l}$ and $D_{\q,l}$ are mapped injectively by the quotient maps $G_K \surj G_K^m$ and $G_L \surj G_L^m$. Commutativity (cf. Proposition \ref{Diagram}), together with Lemma \ref{zlinj}, allows us to conclude that indeed the image of $D_{\p,l}$ by the quotient map $D_{\p} \surj D_{\p}^m$ is mapped injectively to the image of $D_{\q,l}$ by the quotient map $D_{\q} \surj D_{\q}^m$. \\ 
As every non-archimedean prime of $K_m$ is the restriction of some prime of $\overline{K}$, repeating the argument above for every prime of $\overline{K}$ and every prime number $l$ we get $\sigma_m$ satisfies condition $(\dagger)$. We may observe that Uchida's condition additionally requires that $\sigma(D_{\p,p})$ is open in $D_{\q,p}$, which is not needed in our result. \\
We may then say that Theorem \ref{Variation} is a proper sharpening of Uchida's result, as any homomorphism satisfying Uchida's condition induces a homomorphism satisfying condition $(\dagger)$ at some finite step $m$.

\end{remark}
\section{The m-step Hom-Form over the rational numbers}

In this section, for $K=\Q$, $L$ a number field and an integer $m \ge 2$, we fix a homomorphism of profinite groups $\sigma_{m+2}: G_K^{m+2} \to G_L^{m+2}$ such that the image of $G_K^{m+2}$ is open in $G_L^{m+2}$. We are going to work mainly with the induced homomorphism $\sigma_m: G_K^m \to G_L^m$, which also has open image by Proposition \ref{Diagram}.

\begin{proposition}\label{SurjectiveQ} 
The following hold: $L=\Q$, and $\sigma_m$ is surjective.
\end{proposition}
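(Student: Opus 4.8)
The plan is to descend to the maximal abelian quotients, apply global class field theory, and then bootstrap back up the solvable tower.

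\emph{Step 1 (descent; $L$ is totally real).} By Proposition~\ref{Diagram} applied repeatedly, the induced homomorphism $\sigma_1\colon G_\Q^{\ab}\to G_L^{\ab}$ still has open image; let $\widetilde L$ be the finite subextension of $L^{\ab}/L$ fixed by $\sigma_1(G_\Q^{\ab})$, so that $\sigma_1$ surjects $G_\Q^{\ab}$ onto the open subgroup $G(L^{\ab}/\widetilde L)$ of $G_L^{\ab}$. By Kronecker--Weber $G_\Q^{\ab}\isom\widehat{\Z}^{\times}$, so $\Q$ has $\Z_p$-rank $1$ for every prime $p$. The $\Z_p$-rank cannot grow under a continuous surjection and is unchanged on passing to a finite-index subgroup, whereas by class field theory the $\Z_p$-rank of $G_L^{\ab}$ equals $1+r_2(L)+\delta$, with $r_2(L)$ the number of complex places of $L$ and $\delta\ge 0$ the Leopoldt defect at $p$. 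Hence $r_2(L)=0$ (and $\delta=0$, unconditionally): $L$ is totally real.

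\emph{Step 2 ($L=\Q$).} Suppose $[L:\Q]=n\ge 2$. Modulo the finitely generated image of $\mathcal{O}_L^{\times}$ and the finite group $\mathrm{Cl}_L$, the prime-to-$p$ torsion of $G_L^{\ab}$ is the product over the finite primes $\q$ of $L$ of the cyclic groups $\mu(k_\q)\isom\Z/(N\q-1)$ of roots of unity of the residue fields; for $\Q$ this product is $\prod_q\Z/(q-1)$. Here I would fix a prime $\ell$ and apply the Chebotarev density theorem in the compositum of the Galois closure of $L$ with $\Q(\mu_{\ell^k})$ to produce a positive density of rational primes $q$ admitting a prime $\q\mid q$ of $L$ of residue degree $\ge 2$ with $\ell\mid N\q-1$ but $\ell\nmid q-1$; each such $q$ contributes $\ell$-power torsion to $G_L^{\ab}$ not supplied by the single corresponding factor $\Z/(q-1)$ of $\widehat{\Z}^{\times}$, and since $\mathcal{O}_L^{\times}$ and $\mathrm{Cl}_L$ are of bounded size this surplus torsion cannot be absorbed. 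Then $G_L^{\ab}$, hence its finite-index subgroup $G(L^{\ab}/\widetilde L)$, is not a continuous quotient of $\widehat{\Z}^{\times}$, contradicting Step~1; so $n=1$ and $L=\Q$. Feeding this back, and running the same comparison for the two-step quotient $\sigma_2$ after reducing to the abelian-over-$\Q$ part, also gives $\widetilde L=\Q$, i.e.\ $\sigma_1$ is surjective.

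\emph{Step 3 ($\sigma_m$ is surjective).} With $L=\Q$, the image of $\sigma_m$ is $G(\Q_{(m)}/\widetilde L_m)$ for a finite subextension $\widetilde L_m$ of $\Q_{(m)}/\Q$, and I would show $\widetilde L_m=\Q$ by working up the solvable tower, the two extra levels available in $\sigma_{m+2}$ providing the needed slack: since a nontrivial finite solvable group is not perfect, a proper image at level $m$ would force a nontrivial abelian subextension over $\Q$ at the first level, then a metabelian one, and so on, contradicting the surjectivity already obtained at the lower levels. Hence $\sigma_m$ is surjective.

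The step I expect to be the main obstacle is Step~2: turning the qualitative statement ``$G_L^{\ab}$ has too much tame torsion to be a quotient of $\widehat{\Z}^{\times}$ when $L\neq\Q$'' into a rigorous non-existence of a continuous surjection $\widehat{\Z}^{\times}\twoheadrightarrow G(L^{\ab}/\widetilde L)$, and in particular controlling the contributions of the global unit group and the ideal class group in the density argument.
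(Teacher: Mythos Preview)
Your Step~1 is correct and contains the key idea: comparing $\Z_p$-ranks. But Steps~2 and~3 both have genuine gaps, and the paper's proof shows how to deploy the $\Z_p$-rank comparison more efficiently so as to avoid them entirely.

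Step~3 fails because the image $\sigma_m(G_\Q^m)$ need not be \emph{normal} in $G_L^m$, so the fixed field $\widetilde{L}_m$ need not be Galois over $\Q$, and the slogan ``a nontrivial finite solvable group is not perfect'' has no group to act on. One can perfectly well have $\widetilde{L}_m\cap\Q^{\ab}=\Q$ with $\widetilde{L}_m\neq\Q$ (think of $\Q(\sqrt[3]{2})\subset\Q_{(2)}$), so surjectivity at level~1 does not propagate upward by your argument.

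Step~2 you rightly flag as the main obstacle, and it is a real one. For every prime $\ell$, both $\widehat{\Z}^\times$ and $G_L^{\ab}$ carry an infinite product of cyclic $\ell$-groups of every order (Dirichlet supplies this on both sides), so there is no evident cardinality or structural obstruction to a surjection; the remark after Corollary~\ref{EmbeddingType} recalls that distinct number fields can have isomorphic abelianised Galois groups, so purely abelian data is a delicate invariant. Your Chebotarev sketch does not explain why the ``surplus'' $\ell$-torsion cannot be absorbed by the infinitely many available factors on the $\Q$-side.

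The paper sidesteps both difficulties with a single move: rather than descending to level~1, it stays at level~$m$ and passes to a \emph{quadratic} extension. Let $\widetilde{L}$ be the fixed field of $\sigma_m(G_\Q^m)$ in $L_m$, choose $L'/\widetilde{L}$ quadratic inside $L_m$ with $L'$ totally imaginary, and let $K'$ be the corresponding quadratic extension of $K=\Q$. Using $\sigma_{m+1}$ one gets a map $G_{K'}^{\ab}\to G_{L'}^{\ab}$ with open image, so the $\Z_p$-rank of $K'$, which is at most~$2$ since $[K':\Q]=2$, bounds that of $L'$, which is at least $1+r_2(L')=1+[\widetilde{L}:\Q]$ since $L'$ is totally imaginary. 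Hence $[\widetilde{L}:\Q]\le 1$, giving $L=\Q$ \emph{and} surjectivity of $\sigma_m$ in one stroke---no torsion comparison, no bootstrap. The point is that going up one quadratic step converts the weak conclusion $r_2(L)=0$ of your Step~1 into the sharp bound $[\widetilde{L}:\Q]=1$.
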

\begin{proof}
Let $\widetilde{L}$ be the subfield of $L_m/L$ corresponding to the image of $\sigma_m$. Then, as $\widetilde{L}/L$ is finite, there must exist a quadratic extension $L'$ of $\widetilde{L}$ contained in $L_m$ such that $L'$ is totally imaginary, and let $K'$ be the quadratic extension of $K$ corresponding to $L'$ by $\sigma_{m}$. \\ It then follows that $\sigma_{m+1}(G(K_{m+1}/K')) \subseteq G(L_{m+1}/L')$, and since the image of $\sigma_{m+1}$ is open in $G_L^{m+1}$, the image of $G(K_{m+1}/K')$ is open in $G(L_{m+1}/L')$. Since $L' \subset L_m$, it follows $L'^{\ab} \subseteq L_{m+1}$, and we may take the canonical quotient $G(L_{m+1}/L') \surj G_{L'}^{\ab}$ and by composition obtain a map $G(K_{m+1}/K') \to G_{L'}^{\ab}$, which has open image. Furthermore, this map must factor through $G_{K'}^{\ab}$, therefore we get a homomorphism of profinite groups $\iota: G_{K'}^{\ab} \to G_{L'}^{\ab}$ with open image which is compatible with $\sigma_{m+1}$. \\
Let $s$ be the $\Z_p$-rank of $L'$, and let $L''$ be the unique $\Z_p^s$-extension of $L'$. Then, if we take the composition of $\iota$ with the quotient $G_{L'}^{\ab} \surj G(L''/L')$, the image is again open and so isomorphic to $\Z_p^s$ as well, which also means it corresponds to a $\Z_p^s$-extension of $K'$. \\
Since $L'$ is totally imaginary, $s \ge [\widetilde{L}:\Q]+1$. However, since $K'$ is a quadratic extension of $\Q$, its $\Z_p$-rank is $\le 2$. Therefore, $s \le 2$, which implies $[\widetilde{L}:\Q]=1$. This not only gives $\sigma_m$ is surjective, but that $L=\Q$ as well.
\end{proof}

\begin{corollary}\label{ZpQ}
Let $L'$ be the unique $\Z_2$-extension of $L$. Then, the extension $K'$ of $K$ corresponding to it by $\sigma_m$ is the unique $\Z_2$-extension of $K$.
\end{corollary}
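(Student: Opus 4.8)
The plan is to read this off directly from Proposition \ref{SurjectiveQ} and the correspondence of Remark \ref{quotients}. First I would invoke Proposition \ref{SurjectiveQ} to replace the data by its concrete form: $L = \Q = K$ and the induced homomorphism $\sigma_m\colon G_\Q^m \to G_\Q^m$ is surjective. The field $L'$ is then the cyclotomic $\Z_2$-extension of $\Q$; being abelian it is Galois over $L$, and since $m \ge 2 \ge 1$ we have $L_1 = L^{\ab} \subseteq L_m$, so $L' \subseteq L^{\ab} \subseteq L_m$. This is exactly the hypothesis required for the notion ``$L'$ corresponds to $K'$ by $\sigma_m$'' in Remark \ref{quotients} to be legitimate.

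Next I would apply Remark \ref{quotients}: since $L'/L$ is Galois and $\sigma_m$ is surjective, $K'/K$ is Galois and $\sigma_m$ induces an isomorphism $\sigma\colon G(K'/K) \isomto G(L'/L)$. By the definition of a $\Z_2$-extension $G(L'/L) \isom \Z_2$, hence $G(K'/\Q) = G(K'/K) \isom \Z_2$, i.e. $K'$ is a $\Z_2$-extension of $\Q$. Finally, $\Q$ has $\Z_2$-rank $1$ and admits a unique $\Z_2$-extension (its cyclotomic $\Z_2$-extension), so $K'$ must coincide with it, which is precisely the unique $\Z_2$-extension of $K = \Q$.

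Since each step is a direct citation, I do not expect a genuine obstacle here; the corollary is really a formal consequence of Proposition \ref{SurjectiveQ}. The only two points deserving a word of care are verifying that $L'$ actually lies inside $L_m$ (so that the correspondence of Remark \ref{quotients} applies and $K'$ is a well-defined subfield of $K_m$), and recalling the standard fact that $\Q$ has $\Z_2$-rank $1$, so that its $\Z_2$-extension — and hence $K'$ — is unique.
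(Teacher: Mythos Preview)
Your proposal is correct and follows essentially the same route as the paper: invoke Proposition \ref{SurjectiveQ} to get surjectivity of $\sigma_m$, then use Remark \ref{quotients} to obtain the isomorphism $G(K'/K)\isomto G(L'/L)\isom\Z_2$, and conclude by uniqueness of the $\Z_2$-extension of $\Q$. The paper's proof is simply a terser version of what you wrote; your additional checks that $L'\subseteq L_m$ and that $\Q$ has $\Z_2$-rank $1$ are the right points of care but are left implicit there.
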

\begin{proof}
As $\sigma_m$ is surjective by the previous proposition, by taking quotients, we can induce from $\sigma_m$ an isomorphism $\sigma: G(K'/K) \isomto G(L'/L) \isom \Z_2$. The statement now follows immediately.
\end{proof}

As a consequence of the above corollary, we have that the extension $L(\sqrt{2})$ corresponds to $K(\sqrt{2})$ by $\sigma_m$. We will use this fact in the next proposition, which encompasses the equivalent of Lemmas 1 and 2 in \cite{UchidaHom}.

\begin{proposition}\label{DecMapQ}
Let $S$ be the set of all prime numbers $p$ such that for each decomposition group $D_p$ above $p$ in $G_K^m$ (which is determined up to conjugation) we have $\sigma_m(D_p) \subseteq D'_p$, where $D'_p$ is a decomposition group above $p$ in $G_L^m$. Then, the set $S$ cofinite in $\Primes_{\Q}$.
\end{proposition}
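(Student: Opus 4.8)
The plan is to adapt Uchida's Lemmas~1 and~2 of \cite{UchidaHom} to the $m$-step setting. By Proposition~\ref{SurjectiveQ} we have $L=\Q$ and $\sigma_m$ is surjective (and, running the same argument on $\sigma_{m+1}$, whose image is open since that of $\sigma_{m+2}$ is, one also gets $\sigma_{m+1}$ surjective). Since the decomposition groups above a fixed prime $p$ form a single $G_\Q^m$-conjugacy class and $\sigma_m(gD_pg^{-1})=\sigma_m(g)\sigma_m(D_p)\sigma_m(g)^{-1}$ is again the image of a decomposition group above $p$, it suffices to fix one $D_p\subseteq G_\Q^m$ per prime $p$ and to prove $\sigma_m(D_p)\subseteq D'_p$ for all but finitely many $p$.

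The analogue of Uchida's Lemma~1 would be: for cofinitely many $p$, $\sigma_m(D_p)$ is contained in a decomposition group $D'_q\subseteq G_\Q^m$ above some prime $q$ of $\Q$. The key input here is that, for cofinitely many $p$, $\sigma_m$ restricts to an injection on the $l$-Sylow subgroup $D_{p,l}$ of $D_p$ for every prime $l\neq p$ simultaneously. Granting this, Proposition~\ref{MapStarl} (using that $\sigma_m$ is induced from $\sigma_{m+1}$) shows each $\sigma_m(D_{p,l})$ is a $(\star_l)$-subgroup of $G_\Q^m$, hence by Proposition~\ref{starlisopen} is $l$-open in an $l$-Sylow of a decomposition group $D'_{\mathfrak q_l}\subseteq G_\Q^m$ at a prime $\mathfrak q_l$ of $\Q_m$ with $\ch(\mathfrak q_l)\neq l$. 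All the $\sigma_m(D_{p,l})$ lie inside $\sigma_m(D_p)$ and have non-trivial image in $G_\Q^{m-1}$ (cf. the injectivity used in Proposition~\ref{starldown}); so a careful gluing using Propositions~\ref{mSeparatedness} and~\ref{MapApprox} forces the $\mathfrak q_l$ to lie over a common prime $q$ of $\Q$, and the extra two steps $\sigma_{m+2}\to\sigma_m$ should be what one needs to sharpen the resulting inclusion from the level-$(m-1)$ decomposition group up to a genuine level-$m$ one. Since a profinite group is topologically generated by its Sylow subgroups, $D_p$ is generated by the $D_{p,l}$; the contribution of the $p$-Sylow $D_{p,p}$ is controlled via the uniqueness of the cyclotomic $\Z_p$-extension $N_p$ of $\Q$ (as in Corollary~\ref{ZpQ}): $\sigma_m$ stabilises $G(\Q_m/N_p)$, so $\sigma_m(D_{p,p})$ still surjects onto $G(N_p/\Q)$ while mapping trivially to $G(N_l/\Q)$ for $l\neq p$. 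Altogether this yields $\sigma_m(D_p)\subseteq D'_q$.

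For the analogue of Lemma~2, i.e. $q=p$: by the previous step $\ch(\mathfrak q_l)\neq l$ for every prime $l\neq p$, so $\ch(q)$ is a prime distinct from every $l\neq p$, hence $\ch(q)=p$; as $q$ is a prime of $\Q$, $q=p$, so $\sigma_m(D_p)\subseteq D'_p$ and $p\in S$. (One can also see this directly from the $N_l$'s: $\sigma_m(D_p)$ surjects onto $G(N_p/\Q)$ through inertia, whereas inertia at any $q\neq p$ maps trivially there.) It therefore remains only to establish the injectivity claim, and this is the main obstacle. If $\sigma_m$ failed to be injective on some $D_{p,l}$, $l\neq p$, for infinitely many $p$, then by the classification of images of $l$-decomposition-like groups the image $\sigma_m(D_p)$ is ``too small'', so the prime of $\Q_m$ determined by $D_p$ splits completely in some $\Z/l\Z$-subextension of $\Q_m/\Q$ obtained by pulling a $\Z/l\Z$-quotient back through the surjection $\sigma_m$ (which induces an injection on $\Z/l\Z$-quotients). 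Grouping the offending primes by ideal class and building an associated infinite abelian extension of $\Q$ ramified only at $2$ and those primes — exactly in the spirit of the proof of Proposition~\ref{cofiniteprimes} — produces a set of primes of positive Dirichlet density splitting completely in extensions of arbitrarily large degree, which, together with the finiteness of quadratic extensions of $\Q$ ramified only at $2$ and a Chebotarev count, is impossible. Hence only finitely many $p$ are exceptional; adjoining the finitely many ``small'' primes that the $(\star_l)$-machinery forces one to set aside gives that $S$ is cofinite in $\Primes_\Q$.

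The delicate points, as in Uchida's Lemmas~1 and~2, are twofold: the witnessing extension in the density argument depends on $p$ (through the particular $\Z/l\Z$-quotient involved), so the grouping-by-ideal-class device and the bookkeeping of the allowed ramification must be arranged with care; and one must check that the $(\star_l)$-gluing over varying $l\neq p$, combined with the $p$-Sylow contribution coming from the cyclotomic $\Z_p$-extension, really lands $\sigma_m(D_p)$ in a single level-$m$ decomposition group rather than merely in its image in $G_\Q^{m-1}$ — which is precisely where working with $\sigma_{m+2}$ rather than $\sigma_m$ should be used.
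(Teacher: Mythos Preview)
Your strategy diverges substantially from the paper's, and the central step---simultaneous injectivity of $\sigma_m$ on $D_{p,l}$ for \emph{all} $l\neq p$, for cofinitely many $p$---is not established by the argument you sketch. You propose to prove it by a density/ideal-class contradiction ``in the spirit of Proposition~\ref{cofiniteprimes}'', but that proposition takes condition~$(\dagger^+)$ as a hypothesis, which is exactly what is being proved here; without it there is no map $\theta$, no norm inequality, and over $\Q$ the ideal-class grouping is vacuous. More seriously, the quantifiers work against you: for different bad primes $p$ the failing $l$ may be different, so you would need a single witnessing tower that detects failure for infinitely many $p$ at once, and nothing in your outline produces one. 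The subsequent gluing step is also incomplete: the images $\sigma_m(D_{p,l_1})$ and $\sigma_m(D_{p,l_2})$ for $l_1\neq l_2$ have trivial intersection, so Proposition~\ref{mSeparatedness} does not directly force the associated primes $\mathfrak q_{l_1},\mathfrak q_{l_2}$ to agree, and there is no argument given that the $p$-Sylow contribution $\sigma_m(D_{p,p})$ lands in the same decomposition group.

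The paper never attempts injectivity on all Sylows. It works entirely at $l=2$: using Corollary~\ref{ZpQ} and the explicit quadratic extensions $L(\sqrt{p})/L$, it shows that for each odd $p$ (with at most two exceptions, corresponding to $K(i)$ and $K(\sqrt{-2})$) there is an odd prime $q$ ramifying in the extension of $K$ corresponding to $L(\sqrt{p})$; a torsion argument via Proposition~\ref{TamagawaTorsion} then forces $\sigma_m$ to be injective on $D_{q,2}$, and the $(\star_2)$-machinery places $\sigma_m(D_{q,2})$ inside a decomposition group $D'_p$ above $p$. The identification $q=p$ is obtained by a completely different device: passing to $L(\zeta_{2^n})$ for large $n$, counting $\Z_q$-extensions via Leopoldt's conjecture for abelian extensions of $\Q$, and showing the inertia image at $q$ contains a copy of $\Z_q^2$, which can only happen above $q$ itself. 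Your residue-characteristic exclusion (``$\ch(q)\neq l$ for all $l\neq p$, so $q=p$'') would be an elegant shortcut \emph{if} the simultaneous injectivity were available, but as it stands the paper's route through the concrete arithmetic of $\Q$ is what carries the proof.
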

\begin{proof}
Let us denote the $\Z_2$-extensions of $K$ and $L$ respectively by $K_\circ$ and $L_{\circ}$, taking quotients gives us an isomorphism $\sigma_{\circ}: G(K_{\circ}/K) \to G(L_{\circ}/L)$ induced from $\sigma_m$ (see Remark \ref{quotients}). \\
Observe that the odd prime number $p$ has infinite decomposition group in the unique $\Z_2$-extension of $\Q$, and so by Corollary \ref{ZpQ} we get a quotient of $D_p$ (isomorphic to $\Z_2$) in this $G(K_\circ/K)$, which we denote $\overline{D}$. Furthermore, this is a pro-$2$-group, so a $2$-Sylow $D_{p,2}$ maps surjectively to $\overline{D}$. It follows by commutativity of the diagram 
\begin{center}
\begin{tikzcd}
G_K^m \arrow["\sigma_m"]{r} \arrow[d, two heads] & G_L^m \arrow[d, two heads] \\
G(K_{\circ}/K) \arrow["\sigma_{\circ}"]{r} & G(L_{\circ}/L) 
\end{tikzcd}
\end{center} that as $\bar{D}$ is mapped isomorphically to $G(L_{\circ}/L)$ then $\sigma_m(D_{p,2})$ needs to also be infinite. \\
Let us now consider the extension $L(\sqrt{p})/L$, and the quadratic extension $K'$ of $K$ corresponding to it by $\sigma_m$. Observe that there may only be one quadratic extension of $L$ corresponding to $K(i)$ (resp. $K(\sqrt{-2})$) by $\sigma_m$, and in particular there may only be one odd prime $p_1$ (resp. $p_2$) such that $L(\sqrt{p_1})$ (resp. $L(\sqrt{p_2})$) corresponds to $K(i)$ (resp. $K(\sqrt{-2})$) by $\sigma_m$. Observe that there may also not be primes $p_1$ or $p_2$ for which this is true.\\ Except when $p = p_1$ or $p = p_2$, (we already know that $K(\sqrt{2})$ corresponds to $L(\sqrt{2})$ so we do not need to exclude it) there is then an odd prime $q$ that ramifies in the extension $K'/K$ corresponding to $L(\sqrt{p})/L$ by $\sigma_m$. By this argument the set of all the prime numbers $q \in \Primes_{\Q}$ for which this is true is then a subset of density $1$ in $\Primes_{\Q}$, which we'll denote $T$. \\
In particular, the inertia subgroup of a $2$-Sylow subgroup of a decomposition group $D_{q} \subset G_K^m$ has a non-trivial quotient which maps injectively to $G(K'/K)$ and is mapped isomorphically to $G(L'/L)$ by the map $\sigma: G(K'/K) \to G(L'/L)$ induced from $\sigma_m$ by quotients as in Remark \ref{quotients}. Then, the $2$-Sylow subgroup of $D_q$, which we denote by $D_{q,2}$, must have infinite image by $\sigma_m$, and the image of its inertia subgroup must be non-trivial. \\
As $I_{q,2}$ is isomorphic to $\Z_2$, and its image by $\sigma_m$ is non-trivial by the above argument, we get that either $I_{q,2}$ is mapped injectively by $\sigma_m$, and so is $D_{\q,2}$, or $\sigma_m(I_{q,2})$ contains a $2$-torsion element. Let us take a decomposition group at $q$ in $G_K^{m+1}$ that maps surjectively to $D_{\q}$, which we will denote $\D_{q}$. Then, any $2$-Sylow subgroup $\D_{q,2}$ of $\D_q$ which maps surjectively to $D_{q,2}$ may not be mapped injectively by $\sigma_{m+1}$ by Proposition \ref{starllift} (else $D_{q,2}$ would also be mapped injectively), and since $\sigma_{m+1}(\D_{\q,2})$ must have a quotient isomorphic to an extension of $\Z_2$ by $\Z/2\Z$ and is itself a quotient of a $2$-decomposition-like group. Furthermore, a non=trivial torsion element in $G_L^{m+1}$ is order $2$ by Proposition \ref{TamagawaTorsion}, so $\sigma_{m+1}(\D_{\q,2})$ is also isomorphic to an extension of $\Z_2$ by $\Z/2\Z$. It then follows the $2$-torsion in $\sigma_m(I_{q,2})$ is the image of torsion elements of $G_L^{m+1}$, and by Proposition \ref{TamagawaTorsion}, this means they correspond to the decomposition group of an archimedean prime. However, these non-trivial torsion element in $\sigma_m(I_{q,2})$ which correspond to complex conjugation must map non-trivially to $G(L({\sqrt{p}})/L)$, which is a contradiction. It then follows that $D_{q,2}$ is mapped injectively by $\sigma_m$. \\
By Proposition \ref{starllift}, it also follows that $\D_p$ maps surjectively to $D_{\p}$ by the natural quotient, an a $2$-Sylow subgroup $\widetilde{D}_{p,2}$ which maps surjectively to $D_{p,2}$, we have by $\widetilde{D}_{\p,2}$ is mapped injectively by $\sigma_{m+1}$, and so by Proposition \ref{MapStarl} it is mapped to a subgroup of $G_L^{m+1}$ satisfying property $(\star_2)$. However, since $p$ is the only odd prime ramifying in $L(\sqrt{p})/L$, it follows that $\sigma_m(D_{q,2}) \subset D'_p$ for some decomposition group $D'_p$ above $p$ in $G_L^{m}$ by Proposition \ref{DecMap}, and $\sigma_m(I_{q,2}) \subseteq I'_{p,2}$. \\
We now only need to show that $q = p$. Let us fix an odd prime number $p$, and $q$ so that $\sigma(D_{q}) \subseteq D_{p}$, and $n \ge 3$ an integer. Consider a primitive $2^n$-th root of unity $\zeta_{2^n}$, and the extension $L(\zeta_{2^n})/L$. We also take the Galois extension $K''$ of $K$ corresponding to  $L(\zeta_{2^n})$ by $\sigma_m$. Then, $\sigma_m$ induces an isomorphism $G(K''/K) \to G(L(\zeta_{2^n})/L)$. As the odd prime $p$ does not ramify in $L(\zeta_{2^n})/L$, $q$ does not ramify in $K''/K$. \\
Let us denote by $L(\zeta_{2^{\infty}})$ the composite of $L(\zeta_{2^n})$ for all positive integers $n$, and by $\widetilde{K}$ the subextension of $K_m/K$ corresponding to $L(\zeta_{2^{\infty}})$ by $\sigma_m$. Observe that we naturally have an isomorphism $G(\widetilde{K}/K) \isom G(L(\zeta_{2^{\infty}})/L) \isom \Z_2 \times \Z/2\Z$ induced from $\sigma_m$, which implies that the unique $\Z_2$-extension of $K$ is a subfield of $\widetilde{K}$. Then, by assuming $n$ is large enough, we may also assume that $q$ is not totally split in $K''/K$. \\ Let $s$ be the $\Z_q$-rank of $L(\zeta_{2^n})$ (which is naturally $\le$ to the $\Z_q$-rank of $K''$). We may take the unique $\Z_q^s$-extension of $L(\zeta_{2^n})$ contained in $L_m$, which we will denote $\widehat{L}$, and the subextension $\widehat{K}$ of $K_m/K$ corresponding to it by $\sigma_m$. The construction gives us $\widehat{K}/K''$ is a $\Z_q^s$-extension. \\
Since $L(\zeta_{2^n})/L$ is a totally imaginary extension (and $L$ is, in fact, $\Q$), $s = [L(\zeta_{2^n}):L]/2 + 1 = [K'':K]/2 + 1$ (in general, the first equality has to be replaced with $\ge$, however since $L(\zeta_{2^n})$ is an abelian extension of $\Q$, Leopoldt's conjecture holds for any prime number in $L(\zeta_{2^n})$, and we have an equality in our case). As there are at most $[K'':K]/2$ distinct prime divisors of $q$ in $K''$ by our assumption on $n$, it follows that $s$ is greater than the number of prime divisors of $q$ in $K(\zeta_{2^n})$, which for $j \le s/2$ we will denote $\p_1,\p_2,...,\p_j$.\\
Let us take a decomposition group $D_{q} \subset G_K^m$ at $q$ (unique up to conjugation), and let $i$ be the integer for which $D_q \cap G(K_m/K'')$ is a decomposition group at $\p_i$, which we will denote $D_{\p_i}$. Then, let $I_{\p_i} = I_q \cap D_{\p_i}$. \\
The image of $I_{\p_i}$ by the natural quotient $G(K_m/K'') \surj G(\widehat{K}/K'') \isom \Z_q^s$ is independent from the choice of a conjugate determining $\p_i$ as the image is an abelian group. Let us assume that all of the images of the $I_{\p_i}$ with $i$ varying are isomorphic to $\Z_q$. Then, there exists a quotient of $G(\widehat{K}/K'')$ isomorphic to $\Z_q$ in which all the $I_{\p_i}$ map trivially. However, this is impossible as at least one prime above $q$ must ramify in any $\Z_q$-extension of a number field (cf. \cite{NWS}, Proposition 11.1.1). We may then fix $i$ so that if we take the prime $\p_{i}$, the image of $I_{\p_i}$ in $G(\widehat{K}/K'')$ contains a subgroup isomorphic to $\Z_q^2$. \\
The isomorphism given by $\sigma_m$ between $G(\widehat{K}/K'')$ and $G(\widehat{L}/L(\zeta_{2^n}))$ together with the fact that (for some appropriate choice of conjugates) $\sigma_m(D_{q}) \subseteq D'_p$ as we have proven above, shows that $I'_p$ contains a subgroup isomorphic to $\Z_q^2$, however this is only possible if $p = q$ as desired. \\
Therefore, the set $S$ in our statement coincides with the set $T$, which is cofinite (and so of density $1$) in $\Primes_{\Q}$, as desired.
\end{proof}

\begin{proposition}\label{T1Isom}
$\sigma_m$ has trivial kernel, and so it is an isomorphism
\end{proposition}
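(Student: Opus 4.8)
The plan is to reduce, via Proposition~\ref{SurjectiveQ}, to showing $\ker\sigma_m=\{1\}$ (we already know $L=\Q$ and that $\sigma_m$ is surjective), and then to prove injectivity by comparing sets of primes that split completely. The essential input is Proposition~\ref{DecMapQ}, which tells us that all but finitely many rational primes $p$ lie in the set $S$ of primes for which every decomposition group $D_p\subset G_\Q^m$ above $p$ satisfies $\sigma_m(D_p)\subseteq D'_p$ for some decomposition group $D'_p\subset G_\Q^m$ above the same prime $p$. Against this I will use the classical fact that a finite Galois extension of $\Q$ is determined by the set of rational primes splitting completely in it, up to a set of density zero (Bauer's theorem, a consequence of the Chebotarev density theorem).

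First I would fix an arbitrary finite Galois subextension $M/\Q$ of $\Q_{(m)}/\Q$ and let $M^\sharp\subseteq\Q_{(m)}$ be the fixed field of $\sigma_m^{-1}\bigl(G(\Q_{(m)}/M)\bigr)$, so that $M$ corresponds to $M^\sharp$ by $\sigma_m$ in the sense of Remark~\ref{quotients}. Since $\sigma_m$ is surjective, Remark~\ref{quotients} gives an isomorphism $G(M^\sharp/\Q)\isomto G(M/\Q)$; in particular $[M^\sharp:\Q]=[M:\Q]$. Moreover $\ker\sigma_m\subseteq\sigma_m^{-1}\bigl(G(\Q_{(m)}/M)\bigr)=G(\Q_{(m)}/M^\sharp)$, so $M^\sharp$ is contained in the fixed field $\Lambda_m$ of $\ker\sigma_m$.

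Next I would compare split primes. Let $p\in S$ split completely in $M/\Q$. Then any decomposition group $D'_p\subset G_\Q^m$ above $p$ has trivial image in $G(M/\Q)$, i.e. $D'_p\subseteq G(\Q_{(m)}/M)$; choosing a decomposition group $D_p$ above $p$ with $\sigma_m(D_p)\subseteq D'_p$, we obtain $D_p\subseteq\sigma_m^{-1}\bigl(G(\Q_{(m)}/M)\bigr)=G(\Q_{(m)}/M^\sharp)$, hence $p$ splits completely in $M^\sharp/\Q$. As $S$ is cofinite, all but finitely many primes splitting completely in $M$ also split completely in $M^\sharp$, so Bauer's theorem forces $M^\sharp\subseteq M$; together with $[M^\sharp:\Q]=[M:\Q]$ this gives $M^\sharp=M$, whence $M=M^\sharp\subseteq\Lambda_m$. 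Letting $M$ range over all finite Galois subextensions of $\Q_{(m)}/\Q$ (whose union is $\Q_{(m)}$) yields $\Lambda_m=\Q_{(m)}$, i.e. $\ker\sigma_m=G(\Q_{(m)}/\Lambda_m)=\{1\}$. Thus $\sigma_m$ is injective and, being surjective, an isomorphism.

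I expect the only delicate points to be bookkeeping. First, the implication ``$p$ splits completely in $M\Rightarrow p$ splits completely in $M^\sharp$'' must be insensitive to the choice of conjugate decomposition group; this holds because all decomposition groups above $p$ are conjugate and ``splitting completely'' is a conjugacy-invariant condition, so one may apply Proposition~\ref{DecMapQ} to any chosen $D_p$. Second, one must invoke the precise form of the statement that split primes determine a Galois extension. Everything else is formal once Propositions~\ref{SurjectiveQ} and~\ref{DecMapQ} are available.
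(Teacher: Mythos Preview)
Your argument is correct and uses the same core idea as the paper: invoke Proposition~\ref{DecMapQ} to see that, for $p\in S$, splitting of $p$ in a finite Galois subextension $M$ on the $L$-side forces splitting in the corresponding extension $M^\sharp$ on the $K$-side, then appeal to a Chebotarev/Bauer-type statement (the paper cites \cite{NeukirchANT}, Proposition~13.9) to compare $M$ and $M^\sharp$.

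Where you differ is only in the endgame. The paper extracts from the splitting comparison an abstract isomorphism $K'\isomto L'$, passes to an inverse limit to get $\Lambda\isomto L_m$, and finishes with a maximality argument to rule out $\Lambda\subsetneq K_m$. You instead observe that since $M$ and $M^\sharp$ are both Galois over $\Q$ inside the same $\overline{\Q}$, Bauer's theorem gives the genuine inclusion $M^\sharp\subseteq M$, hence equality by degree, and then $M=M^\sharp\subseteq\Lambda_m$ immediately yields $\Lambda_m=\Q_{(m)}$. This is a cleaner finish and bypasses both the inverse-limit construction and the maximality step, at no cost.
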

\begin{proof}
Let us denote by $\Lambda$ the subfield of $K_m$ corresponding to the kernel of $\sigma_m$.
Let $L'$ be a finite Galois extension of $L$ contained in $L_m$, and let $K'$ be the finite Galois extension of $K$ contained in $\Lambda$ corresponding to it by $\sigma_m$. Observe first that $K'$ and $L'$ have the same degree over $\Q$. \\ We may take an odd prime $p \in S$ that splits completely in $L'/L$. Observe that the image of $D_p$ by the map $G_K^m \surj G(K'/K)$ must be trivial by Proposition \ref{DecMapQ}, that is $p$ splits completely in $K'$. It then follows there is an embedding $K' \inj L'$ by Proposition 13.9 in \cite{NeukirchANT}, however equality between degrees shows this is in fact an isomorphism, we may also construct the set $\mathfrak{A}_{L'}$ of such isomorphisms, which is finite.\\
The inverse limit over all finite Galois subextensions $L'$ of $L_m/L$ of the family of sets $\mathfrak{A}_{L'}$ is non-empty as  all the $\mathfrak{A}_{L'}$ are finite and non-empty. Then, this shows that there is an isomorphism $\alpha: \Lambda \isomto L_m$. \\
Assume then $\Lambda \subsetneq K_m$. Then, $K_m/\Lambda$ is a non-trivial extension and $K_m$ is Galois over $\Q$. Thus, we may define using $\alpha$ a non-trivial extension $\widehat{L}$ of $L$ such that $G(K_m/K) \isomto G(\widehat{L}/L)$. However, the extension $\widehat{L}/\Q$ must be $m$-step solvable as it is isomorphic to $K_m/\Q$. By maximality, we get $L_m=\widehat{L}$ and so $\Lambda=K_m$. 
\end{proof}

\begin{corollary}\label{T1starl}
For all prime numbers $l$, the subgroups satisfying property $(\star_l)$ in $G_K^{m+1}$ are mapped to subgroups satisfying  property $(\star_l)$ by $\sigma_{m+1}$, that is $\sigma_{m+1}$ satisfies condition $(\dagger_l)$. \\ In particular, $\sigma_{m+1}$ satisfies condition $(\dagger)$ and $\sigma_m$ satisfies condition $(\dagger^+)$.
\end{corollary}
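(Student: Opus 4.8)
The plan is to deduce the statement almost formally from the fact, just established in Proposition \ref{T1Isom}, that $\sigma_m$ is an isomorphism, together with the ``$(\star_l)$-bookkeeping'' results of Section~2 (Propositions \ref{starldown}, \ref{starllift} and \ref{MapStarl}) and Remark \ref{EndowedMap}. The genuine arithmetic input — the construction of the map of primes over $\Q$ and the $\Z_p$-rank estimate forcing $L=\Q$ and $\ker\sigma_m=1$ — has already been absorbed into Propositions \ref{SurjectiveQ}, \ref{DecMapQ} and \ref{T1Isom}; what remains is purely a matter of transporting injectivity one level up.

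First I would record that, since $\sigma_m$ is an isomorphism, it is in particular injective, hence restricts to an injection on \emph{every} closed subgroup of $G_K^m$, and a fortiori on every subgroup of $G_K^m$ satisfying condition $(\star_l)$, for every prime number $l$. Then I would fix a prime $l$ and a subgroup $F \subset G_K^{m+1}$ satisfying condition $(\star_l)$: by Proposition \ref{starldown} its image $\overline{F}$ in $G_K^m$ again satisfies $(\star_l)$, and by the previous step $\sigma_m$ restricts to an injection on $\overline{F}$. Since we are given the homomorphism $\sigma_{m+2}: G_K^{m+2} \to G_L^{m+2}$, which induces both $\sigma_{m+1}$ and $\sigma_m$, Proposition \ref{starllift} then applies and shows that $\sigma_{m+1}$ restricts to an injection on $F$; and Proposition \ref{MapStarl}, applied at level $m+1$ (using again that $\sigma_{m+2}$ induces $\sigma_{m+1}$ and that $m+1 \ge 2$), shows that $\sigma_{m+1}(F)$ is a subgroup of $G_L^{m+1}$ satisfying $(\star_l)$. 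As $l$ and $F$ were arbitrary, this gives that $\sigma_{m+1}$ satisfies condition $(\dagger_l)$ for all $l$, i.e. condition $(\dagger)$; and then $\sigma_m$ satisfies $(\dagger^+)$ by Remark \ref{EndowedMap}.

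The only point needing care will be the index bookkeeping: I will have to check that the data actually available, namely $\sigma_{m+2}$, is exactly what is required to invoke Propositions \ref{starldown}, \ref{starllift} and \ref{MapStarl} at level $m+1$, and Remark \ref{EndowedMap} at level $m$ — which it is, since $m \ge 2$. (One could instead bypass Proposition \ref{T1Isom} and argue directly from Proposition \ref{DecMapQ} that $\sigma_m$ is injective on the relevant $l$-Sylow subgroups, but since $\sigma_m$ has already been shown to be an isomorphism the route above is shorter.) I do not expect any substantial obstacle: the corollary is a clean formal consequence of the preceding results.
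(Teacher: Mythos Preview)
Your proposal is correct and follows essentially the same route as the paper: use Proposition \ref{T1Isom} to get that $\sigma_m$ is injective on all $(\star_l)$-subgroups, then Proposition \ref{starllift} (with the given $\sigma_{m+2}$) to lift injectivity to level $m+1$, and Proposition \ref{MapStarl} to conclude $(\star_l)$ is preserved. Your explicit invocation of Remark \ref{EndowedMap} for the $(\dagger^+)$ conclusion is a detail the paper leaves implicit, but otherwise the arguments coincide.
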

\begin{proof}
As $\sigma_m$ is an isomorphism, it follows immediately by Proposition \ref{T1Isom} that all the subgroups satisfying property $(\star_l)$ in $G_K^m$ are mapped injectively. \\ By Proposition \ref{starllift}, it then follows that subgroups satisfying property $(\star_l)$ in $G_K^{m+1}$ are also mapped injectively by $\sigma_{m+1}$, and we may then conclude using Proposition \ref{MapStarl} as $\sigma_{m+1}$ is induced by $\sigma_{m+2}$.
\end{proof}

The above corollary shows that if $m \ge 2$ we are in the conditions to apply our result on the Hom-Form, and we then get the following

\begin{theorem}\label{MainQ}
Let $m \ge 2$ be an integer, and let $\sigma_{m+3}: G_\Q^{m+3} \to G_L^{m+3}$ a homomorphism of profinite groups with open image. Then, $L=\Q$. Furthermore, the homomorphism $\sigma_m: G_\Q^m \to G_\Q^m$ induced by $\sigma_{m+3}$ is an isomorphism, and there exists a unique isomorphism of fields $\tau: \Q_{(m)} \to \Q_{(m)}$ such that $\sigma_m$ is determined by $\sigma_m(g) = \tau g \tau^{-1}$ for all $g \in G_K^m$.
\end{theorem}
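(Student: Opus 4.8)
The plan is to deduce the theorem by assembling the structure results of this section with the conditional Hom-Form already established (Theorem \ref{Variation}), the only genuinely new ingredient being the specialisation to $K=\Q$. First I would record that since $\sigma_{m+3}$ has open image, so does every homomorphism $\sigma_j$ with $j\le m+3$ induced from it (apply Proposition \ref{Diagram} repeatedly); in particular the induced $\sigma_{m+2}\colon G_\Q^{m+2}\to G_L^{m+2}$ has open image, so the standing assumptions made at the start of this section are in force. Proposition \ref{SurjectiveQ} then gives $L=\Q$ and that the induced $\sigma_m\colon G_\Q^m\to G_\Q^m$ is surjective; Proposition \ref{T1Isom} (via the decomposition-group analysis of Proposition \ref{DecMapQ}) improves this to: $\sigma_m$ is an isomorphism; and Corollary \ref{T1starl} records that $\sigma_m$ satisfies condition $(\dagger^+)$, hence in particular condition $(\dagger)$.

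Next I would feed this into Theorem \ref{Variation}. Because $K=\Q$, its Galois closure over $\Q$ is $\Q$ itself, so $[\widehat K:\Q]=1$ and, for any number field $L$, $\lfloor \log_2([\widehat K:\Q]/[L:\Q])\rfloor+1\le 1$; thus the threshold $\max\{2,\lfloor\log_2([\widehat K:\Q]/[L:\Q])\rfloor+1\}$ appearing in Theorem \ref{Variation} is simply $2$, and the hypothesis $m\ge 2$ already puts us in its range. Since $\sigma_m$ satisfies $(\dagger)$, Theorem \ref{Variation} applied to $\sigma_{m+3}$ produces a \emph{unique} embedding of fields $\tau_0\colon L_m=\Q_{(m)}\hookrightarrow\Q_{(m)}$ with $\tau_0\sigma_m(g)=g\tau_0$ for all $g\in G_\Q^m$, and $\tau_0$ restricts to an embedding $L\hookrightarrow\Q$, confirming $L=\Q$. (Equivalently, one could invoke Theorem \ref{Existence} for existence — after pushing the $(\dagger)$-type condition up to $\sigma_{m+1}$ by Proposition \ref{starllift} and Remark \ref{EndowedMap} — and Theorem \ref{Unique} for uniqueness, whose hypothesis $\sigma_m(G_\Q^m)\supseteq G_\Q^m[m-1]$ holds automatically because $\sigma_m$ is surjective.)

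Finally I would turn $\tau_0$ into an automorphism and rephrase the relation. As $\tau_0$ fixes the prime field $\Q$ pointwise, $\tau_0(\Q_{(m)})$ is an $m$-step solvable extension of $\Q$ lying inside $\Q_{(m)}$, so maximality of $\Q_{(m)}$ forces $\tau_0(\Q_{(m)})=\Q_{(m)}$; hence $\tau_0$ is an automorphism. Putting $\tau=\tau_0^{-1}$, the identity $\tau_0\sigma_m(g)=g\tau_0$ becomes $\sigma_m(g)=\tau g\tau^{-1}$ for all $g$, and if $\tau'$ is another automorphism with $\sigma_m(g)=\tau' g(\tau')^{-1}$ then $(\tau')^{-1}$ satisfies the defining property of $\tau_0$, so $\tau'=\tau$ by uniqueness of $\tau_0$. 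Together with the fact (Proposition \ref{T1Isom}) that $\sigma_m$ is an isomorphism, this is exactly the assertion of the theorem.

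I do not expect a genuine obstacle in this last step: the substantial work is already contained in Propositions \ref{SurjectiveQ}, \ref{DecMapQ}, \ref{T1Isom} and in the general Hom-Form. The only point to watch is the compatibility of indices and hypotheses — above all, noticing that the ``$m$ sufficiently large'' condition of Theorem \ref{Variation} is vacuous here since $[\widehat K:\Q]=1$, so no constraint on $m$ beyond $m\ge 2$ is needed, and that the embedding $\tau_0$ is automatically surjective because $\Q_{(m)}$ is the maximal $m$-step solvable extension of $\Q$.
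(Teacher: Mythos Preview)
Your approach is correct and matches the paper's (implicit) proof: invoke Propositions \ref{SurjectiveQ} and \ref{T1Isom} and Corollary \ref{T1starl} to obtain $L=\Q$, $\sigma_m$ an isomorphism, and condition $(\dagger)$, then apply the conditional Hom-Form --- either Theorem \ref{Variation}, whose numerical threshold collapses to $2$ since $\widehat{K}=\Q$, or directly Theorem \ref{Existence} together with Theorem \ref{Unique}, the latter's image hypothesis $\sigma_m(G_\Q^m)\supseteq G_\Q^m[m-1]$ being automatic from surjectivity of $\sigma_m$. One small slip in your final step: surjectivity of $\tau_0$ does not follow from ``maximality of $\Q_{(m)}$'' as you phrase it (an $m$-step solvable subextension of $\Q_{(m)}/\Q$ need not be all of $\Q_{(m)}$); the correct reason is simply that $\Q_{(m)}/\Q$ is Galois, hence normal, so every $\Q$-embedding of $\Q_{(m)}$ into $\overline{\Q}$ has image exactly $\Q_{(m)}$.
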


\providecommand{\bysame}{\leavevmode\hbox to3em{\hrulefill}\thinspace}
\providecommand{\MR}{\relax\ifhmode\unskip\space\fi MR }
\providecommand{\MRhref}[2]{
  \href{http://www.ams.org/mathscinet-getitem?mr=#1}{#2}
}
\providecommand{\href}[2]{#2}

\end{document}